\documentclass[11pt, reqno]{amsart}
\usepackage{amsfonts,amsmath,amsthm}
\usepackage{amssymb,epsfig}
\usepackage{pgf,tikz}
\usepackage{fancybox}
\usepackage[T1]{fontenc}
\usepackage{stmaryrd}
\usepackage{graphicx}
\usepackage{eso-pic}
\usepackage[top=2cm, bottom=2cm, outer=0cm, inner=0cm, footskip=0.35in]{geometry}
\usepackage{fullpage}

\usepackage[unicode=true]{hyperref}
\hypersetup{colorlinks = true}
\hypersetup{
    colorlinks = true,
    linkcolor = {blue},
    citecolor={blue}
}

\usepackage[above,below,verbose,section]{placeins}
\makeatletter

\usepackage{mathrsfs}
\usepackage{mathpazo}
\usepackage{lastpage}
\usepackage[shortlabels]{enumitem}

\usepackage{tikz}
\usetikzlibrary{shapes.geometric, arrows, calc}
\tikzstyle{startstop} = [rectangle, rounded corners, minimum width=4cm, minimum height=2cm,text centered, draw=black, fill=red!30]
\tikzstyle{arrow} = [thick,->,>=stealth]
\makeatletter
\def\namedlabel#1#2{\begingroup
    #2%
    \def\@currentlabel{#2}%
    \phantomsection\label{#1}\endgroup
}
\makeatother

\usepackage[colorinlistoftodos,prependcaption]{todonotes}
\presetkeys%
{todonotes}%
{inline,backgroundcolor=white}{}
\tikzset{/tikz/notestyleraw/.append style={text=black}}

\newtheorem{thm}{Theorem}[section]

\newtheorem{lem}[thm]{Lemma}
\newtheorem{defn}[thm]{Definition}
\newtheorem{prop}[thm]{Proposition}
\newtheorem{cor}[thm]{Corollary}

\newtheorem{ex}[thm]{Example}
\newtheorem{rmk}[thm]{Remark}
\newcommand{\be}{\begin{eqnarray}}
\newcommand{\ee}{\end{eqnarray}}
\newcommand{\ben}{\begin{eqnarray*}}
\newcommand{\een}{\end{eqnarray*}}
\newcommand{\beal}{\begin{aligned}}
\newcommand{\enal}{\end{aligned}}
\newcommand{\beq}{\begin{equation}}
\newcommand{\eeq}{\end{equation}}

\newcommand{\eps}{\varepsilon}

\newcommand{\lb}{\lambda}

\newcommand{\T}{\mathbb{T}}
\newcommand{\R}{\mathbb{R}}

\newcommand{\N}{\mathbb{N}}

\newcommand{\Z}{\mathbb{Z}}

\newcommand{\om}{\omega}

\newcommand{\Dt}{\Delta}

\newcommand{\cC}{\mathcal{C}}

\newcommand{\cH}{\mathcal{H}}

	\def\textb{\textcolor{blue}}

\title{Quantitative homogenization for static contact Hamilton-Jacobi equations}

\thanks{{\it Statements and Declarations: }The authors declare no competing interests.}
\subjclass[2010]{
35B27,
35D40, 
37K99, 
49L25, 
70H20, 
74Q10,
78M40
}
\keywords{viscosity solution, Hamilton-Jacobi equations, effective Hamiltonian, Mather measure, homogenization}
\date{\today}

\numberwithin{equation}{section}

\begin{document}

\maketitle

\centerline{ Gengyu Liu$^\dagger$, Son N.T. Tu$^\ddagger$, and Jianlu Zhang$^*$}
\medskip

{\footnotesize
    \centerline{ $^{\dagger, *}$ State Key Laboratory of Mathematical Sciences,}
    \centerline{ Academy of Mathematics and Systems Science,}
    \centerline{ Chinese Academy of Sciences, Beijing 100190, China}
    \centerline{{\it Email: }$^\dagger$liugengyu@amss.ac.cn,\; $^*$jellychung1987@gmail.com}
}
\bigskip

{\footnotesize
    \centerline{ $^\ddagger$ Department of Mathematics, Baylor University}
    \centerline{ Waco, Texas 76708, USA}
    \centerline{{\it Email: }son\_tu@baylor.edu}
}
\bigskip

\begin{abstract} 
We characterize possible pairs $(u_\varepsilon,c)\in C(\mathbb{R}^n\backslash\varepsilon\mathbb{Z}^n,\mathbb{R})\times\mathbb{R}$ 
addressing the homogenization problem for Hamilton--Jacobi equations
\begin{equation*}
	H\left(\frac{x}{\varepsilon}, d u_\varepsilon, u_\varepsilon\right)=c, 
	\quad 
	\left({\mathrm resp.} \quad 
	H\left(\frac{x}{\varepsilon}, d u_\varepsilon, u_\varepsilon\right)=\varepsilon\Delta u_\varepsilon+c \right)	
\end{equation*}
for all $\varepsilon>0$. 
Under a (not necessarily strict) monotonicity assumption on the Hamiltonian, we proposed certain criteria (based on the structure of Mather measures), under which all possible 
solutions $u_\varepsilon$ converge to a uniquely identified limit $u\in C(\mathbb{R}^n,\mathbb{R})$ solving the effective equation 
\[
\overline H( du,u)=c,\quad  ({\mathrm resp.}\quad 
\overline H(du,u)=\Delta u+c)
\]
as $\varepsilon\rightarrow 0_+$ with a uniform rate $\mathcal{O}(\varepsilon)$. 
\end{abstract}


\section{Introduction}\label{s1}

Let $\T^n:=\R^n\slash\Z^n$ is equipped with the (quotient) Euclidean norm $|\cdot|$. 
The {\it contact Hamiltonian} $H:(x,p,\theta)\in \T^n\times\R^n\times\R\rightarrow \R$ is a continuous 
function satisfying the following assumptions:
\begin{itemize}
	\item[\namedlabel{itm:A1}{$(\mathcal{A}_1)$}] $H$ is convex in $p$ for every $(x,\theta)\in \T^{n} \times \R$.
	
	\item[\namedlabel{itm:A2}{$(\mathcal{A}_2)$}] $H$ is superlinear in $p$ for every $(x,\theta)\in \T^{n}\times \R$, i.e., $\lim_{\vert p \vert \rightarrow \infty} 	 \frac{H(x,p,\theta)}{|p|} = + \infty$.

    \item[\namedlabel{itm:A3}{$(\mathcal{A}_3)$}]  $\partial_{\theta}H(x,p,\theta) $ is continuous  and non-negative everywhere.
\end{itemize} 
We consider the homogenization problem associated with such a Hamiltonian. Namely, for any $\varepsilon>0$, 
we seek possible pairs $(u_\eps,c)\in C(\eps\T^n,\R)\times\R$ solving the {\it static Hamilton-Jacobi equations}:  
\begin{equation}\label{eq:HJ-ep}
	H\left(\frac{x}{\varepsilon}, d_x u, u\right) = c,\quad x\in\eps\T^n
\end{equation} 
and 
\begin{equation}\label{eq:HJ-ep-2nd}
	H\left(\frac{x}{\varepsilon}, d_x u, u\right) =\eps\Dt u+ c,\quad x\in\eps\T^n,
\end{equation} 
in the viscosity sense, and study the convergence of $u_\eps$ as $\eps\rightarrow 0_+$. The target is twofold. 
First,  we prove the existence of a non-empty connected set $\cC\subset\R$ such that equation \eqref{eq:HJ-ep} or \eqref{eq:HJ-ep-2nd} is solvable for any $c\in\cC$. Second, since there may exist multiple solutions (which differ even up to 
additive constants) associated with the same value $c\in\cC$, 
certain criteria will be proposed for the convergence of all possible $u_\eps$ 
solving \eqref{eq:HJ-ep} or \eqref{eq:HJ-ep-2nd} to a uniquely identified function  $u\in {\rm BUC}(\R^n,\R)$ 
solving the {\it effective} or {\it cell} problem
\begin{equation}\label{eq:HJ-overline}
    \overline{H}(d_x u, u) = c,\quad x\in\R^n.
\end{equation}
Here the {\it effective Hamiltonian} $\overline{H}: (p, \theta)\in\R^{n+1}\rightarrow\R$ is defined 
as the unique value that makes 
\beq\label{eq:erg-const-1st}
H(x, p + d_x u,\theta) = \overline{H}(p, \theta),\quad x\in\T^n
\eeq
or 
\beq\label{eq:erg-const-2nd}
	H(x, p + d_x u,\theta) = \Dt u+\overline{H}(p, \theta),\quad x\in\T^n
\eeq
is solvable (see \cite{lions_homogenization_1986}). 
Moreover, we can find a uniform constant $C:=C(H,c)>0$ such that 
\[
\|u_\eps-u\|_{L^\infty(\R^n)}\leq C \eps,\quad\forall\eps\in(0,1]. 
\]

A main feature of this paper is the extension of homogenization results to a broader class of equations without assuming uniqueness of solutions. A related setting arises for Hamiltonians that are $\left({u}/{\varepsilon}\right)$-periodic, a structure motivated by dislocation dynamics (see \cite{imbert_homogenization_2008, imbert_homogenization_2008', mitake_ni_tran_quantitative_2025} and the references therein).
Our approach relies on some new type of comparison principle (see Proposition \ref{prop:ss} and Proposition \ref{prop:cp-vis}) developed from the Aubry-Mather theory (for both 1st and 2nd order equations), which helps us to quantitatively constrain the multiple solutions $u_\eps$ and then 
constrain $\|u_\eps-u\|_{L^\infty(\R^n)}$ as $\eps\rightarrow 0_+$.

Another feature is the existence of continuous (indeed classical) solutions in the second-order case (see Section~\ref{s4}). Recall that the Perron method used in \cite{crandall_users_1992}  is unavailable in our framework due to the lack of a strict monotonicity of $H$ in $u-$variable, so we employ a vanishing discount procedure 
to bypass this probem. Although the idea of such a procedure is quite similar with \cite{DFIZ, IMT1,IMT2}, the involvement of $\eps-$parameter requires additional arguments for 
 the uniform boundedness and Lipschitz continuity of any family of solutions $\{u_\varepsilon\}_{\varepsilon>0}$ to \eqref{eq:HJ-ep-2nd} (see Proposition~\ref{prop:Bernstein}). 
Moreover, we uncover a global ordering structure among the multiple solutions $u_\varepsilon$ from the viewpoint of weak KAM theory. To the best of our knowledge, this ordering property has not been explicitly described in the existing literature. 

\medskip

The quantitative theory of homogenization has been extensively developed over the past two decades. For first-order equations with multiscale structure, the convergence rate \(O(\varepsilon^{1/3})\) was first obtained in \cite{capuzzo-dolcetta_rate_2001}. Using a similar approach, \cite{camilli_rates_2009} established a rate for multiscale homogenization of static fully nonlinear elliptic equations, and later derived a rate for homogenization with a vanishing viscosity process in \cite{camilli_homogenization_2016}. For single-scale homogenization of Cauchy problems for viscous Hamilton--Jacobi equations, \cite{qian_optimal_2024} recently proved the optimal \(O(\varepsilon^{1/2})\) rate.
In these works, the convexity of the Hamiltonian \(H\) is not required; however, the uniqueness of \(u_\varepsilon\) is typically essential, since the arguments rely on a comparison principle. Our setting is close in spirit to \cite{camilli_homogenization_2016,capuzzo-dolcetta_rate_2001}, but we do not assume uniqueness of \(u_\varepsilon\) (in particular, we do not impose strict monotonicity). Moreover, instead of the doubling-of-variables method used there, we adopt a variational approach with a stronger dynamical-systems flavor.

\medskip 

Finally, we would like to mention that the optimal rate of convergence $O(\eps)$ for single-scale homogenization of convex first-order equations was recently obtained in \cite{tran_optimal_2025}, and then $O(\eps^{1/2})$ for multi-scale case in \cite{han_rate_2023}, with $\mathcal{O}(\varepsilon)$ for 1D multi-scale case in \cite{tu_rate_2021}. 
We also refer the authors to \cite{han_tu_quantitative_2025,hu_polynomial_2025, mitake_ni_rate_2025_system, mitake_ni_quantitative_2025-Neumann} for more quantitative results on the homogenization in different contexts, e.g., the quasi-periodic setting, the weakly coupled systems, and the case with boundary conditions. We note that the existence of a pair of solutions to the first-order contact problem \eqref{eq:HJ-ep-2nd} has also been studied in \cite{jing_generalized_2020} in the absence of homogenization. In addition to a Perron-based method, a fixed-point argument yields an additional pair of solutions $(u,c)$.

\subsection{Main results} 
Our first result concerns the first-order equation \eqref{eq:HJ-ep}. 

\begin{thm}[First-order problem]\label{thm:main} 
Assume \ref{itm:A1}--\ref{itm:A3}.
\begin{enumerate}[(i)]
\item There exists an admissible set 
\begin{equation*}
	\cC:=\{\overline H(0,\theta)\in\R ~|~ \eqref{eq:erg-const-1st} \text{ admits a continuous solution for }p=0, \theta\in\R\}
\end{equation*}
such that for any $c\in\cC$ fixed, equation \eqref{eq:HJ-ep} and \eqref{eq:HJ-overline} are simultaneously solvable for all $\eps>0$. Moreover, $\mathcal{C}$ is a connected set in $\R$.  

\item Let $c\in \mathrm{int}\;\mathcal{C}$. 
Denote by $I(c):=\{\theta\in\R ~|~ \overline H(0,\theta)=c\}$.
If $\inf I(c) >- \infty$, the function 
\[
u^-_\eps:=\inf\{\om\in C(\eps\T^n,\R)~|~\om\text{ is a solution of } \eqref{eq:HJ-ep} \}
\]
is the minimal solution to \eqref{eq:HJ-ep}, and there exists a constant $C:=C(H,c)>0$ such that 
\begin{equation*}
	\Vert u_\eps^- - \inf I(c) \Vert_{L^\infty(\R^n)} \leq C \varepsilon\qquad\text{for}\;\eps\in(0,1].
\end{equation*}
Similarly, if  $\sup I(c) <+\infty$, the function 
\[
u_\eps^+:=\sup\{\om\in C(\eps\T^n,\R)~|~\om\text{ is a solution of } \eqref{eq:HJ-ep} \}
\]
is the maximal solution to \eqref{eq:HJ-ep} and 
\begin{equation*}
	\Vert u_\eps^+ - \sup I(c) \Vert_{L^\infty(\R^n)} \leq C \varepsilon\qquad\text{for}\;\eps\in(0,1]. 
\end{equation*}
\item 
For any $c \in \mathcal{C}$ such that $I(c)=\{\theta\}$ is a singleton, $u\equiv \theta$ is the unique solution of  equation \eqref{eq:HJ-overline} in ${\rm BUC}(\R^n,\R)$.  Consequently, there exists a $C:=C(H,c)>0$ such that for any solution $u_\eps$ of  equation \eqref{eq:HJ-ep}, $\Vert u_{\varepsilon} - u \Vert _{L^\infty(\R^n)} \leq C \varepsilon$ for $\eps\in(0,1]$.
\end{enumerate}
\end{thm}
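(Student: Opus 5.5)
We prove the three parts in turn. Throughout we use $\theta\mapsto \overline H(0,\theta)$, the ergodic constant of the \emph{non-contact} Hamiltonian $H_\theta(x,p):=H(x,p,\theta)$ at $p=0$.

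\emph{Part (i).} For each fixed $\theta$, assumptions \ref{itm:A1}--\ref{itm:A2} and the classical result of \cite{lions_homogenization_1986} give a unique constant $\overline H(0,\theta)$ and a Lipschitz corrector $v_\theta$ solving \eqref{eq:erg-const-1st} on $\T^n$. Hence $\cC=\{\overline H(0,\theta):\theta\in\R\}$. By \ref{itm:A3}, $\theta\mapsto\overline H(0,\theta)$ is nondecreasing. It is also continuous: the map $H\mapsto\overline H$ is $1$-Lipschitz in sup norm on compact $p$-sets, and correctors are uniformly Lipschitz by superlinearity. So $\cC$, the image of $\R$ under a continuous map, is an interval, hence connected.

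For $c=\overline H(0,\theta)\in\cC$, the constant $u\equiv\theta$ solves \eqref{eq:HJ-overline}. Moreover, $u_\eps(x):=\theta+\eps v_\theta(x/\eps)$ solves
\[
H\left(\frac{x}{\eps},d_xu_\eps,\theta\right)=c.
\]
However, \eqref{eq:HJ-ep} involves $u_\eps$ rather than $\theta$ in the last slot, so this is not yet a solution. Instead we build sub- and supersolutions. Normalize $v_\theta$ so that $\max v_\theta=0$. Then $\underline w:=\theta+\eps v_\theta(x/\eps)\le\theta$, and by monotonicity
\[
H(x/\eps,d\underline w,\underline w)\le H(x/\eps,d\underline w,\theta)=c,
\]
so $\underline w$ is a subsolution. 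Similarly, normalizing $\min v_\theta=0$ gives a supersolution $\overline w\ge\theta$. The two are ordered up to a shift of $\eps\,\mathrm{osc}(v_\theta)$.

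Since Perron's method fails without strict monotonicity, we obtain a solution squeezed between them by a discounted approximation. We solve
\[
\lambda(u-\theta)+H(x/\eps,du,u)=c
\]
for $\lambda>0$; here a comparison principle holds. The sub/supersolutions above trap these solutions in $[\theta-C\eps,\theta+C\eps]$, and they are uniformly Lipschitz by coercivity. We then let $\lambda\to0$ using stability. This gives existence for every $\eps$.

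\emph{Part (ii).} Let $\theta_-:=\inf I(c)>-\infty$ and $c\in\mathrm{int}\,\cC$. There are two steps.

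\textbf{(a) Every solution satisfies $u_\eps\ge\theta_--C\eps$.} Let $m=\min u_\eps$, attained at $x_0$. Suppose $m<\theta_-$. Then, using that $u_\eps$ is a subsolution and \ref{itm:A3}, $u_\eps$ is a subsolution of
\[
H(x/\eps,du,m')\le c
\]
on $\{u_\eps\le m'\}$. Globally, we argue through the Mather-measure comparison of Proposition \ref{prop:ss}. Integrating the subsolution inequality against a Mather measure $\mu$ of $H_{\theta}$, with $\theta$ slightly below $\theta_-$ and rescaled to the $\eps$-torus, gives the following: if $u_\eps<\theta$ on the projected support, then $\overline H(0,\theta)\le c$ forces $\theta\in I(c)$ or the inequality is strict. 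The hypothesis $c\in\mathrm{int}\,\cC$ with $\theta<\theta_-$ gives $\overline H(0,\theta)<c$ strictly, and a quantitative margin $\overline H(0,\theta_- -\delta)\le c-\kappa(\delta)$ yields a contradiction. This bounds $\min u_\eps\ge\theta_- - C\eps$, where $C\eps$ comes from the oscillation of $u_\eps$ on an $\eps$-cell. That oscillation is controlled since $|du_\eps|\le L$ uniformly by coercivity and boundedness, so $\mathrm{osc}\,u_\eps\le L\eps\sqrt n$.

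\textbf{(b) Minimality and the matching upper bound.} The solution produced in (i) with $\theta=\theta_-$ lies in $[\theta_--C\eps,\theta_-+C\eps]$. The infimum of solutions is a supersolution (as an inf of viscosity solutions) and a subsolution by convexity, via the stability of subsolutions under inf for convex $H$ together with a Lipschitz bound. Hence $u_\eps^-$ is a solution, and by (a) and the upper bound it lies within $C\eps$ of $\theta_-$. The case of $u_\eps^+$ is symmetric.

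\emph{Part (iii).} First, any BUC solution $u$ of $\overline H(du,u)=c$ equals $\theta$. Monotonicity and convexity of $\overline H$ give $\overline H(p,\theta')\ge\overline H(0,\theta')$ (by evenness-free arguments: $\overline H(p,\theta)\ge\overline H(0,\theta)$ holds for convex $H$, since $0$ minimizes $\overline H(\cdot,\theta)$ in the averaged sense). At a near-maximum of $u$ we then get $\overline H(0,\sup u)\le c$, and at a near-minimum $\overline H(0,\inf u)\ge c$. Since $I(c)=\{\theta\}$ and the map is monotone, $\inf u\ge\theta\ge\sup u$.

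The rate follows by repeating (a) from both sides: any solution is trapped in $[\inf I(c)-C\eps,\sup I(c)+C\eps]$.

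\textbf{Main obstacle.} The hardest step is (a): showing rigorously, via Mather measures and without strict monotonicity, that no solution can dip $\gg\eps$ below $\theta_-$. This is where Proposition \ref{prop:ss} must do the work, and where $c\in\mathrm{int}\,\cC$ is needed to get a strict gap.
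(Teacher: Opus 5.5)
Part (i) of your proposal is fine. Your discounted approximation replaces the paper's Perron argument; incidentally, Perron does work in this first-order setting, because subsolutions lying above a fixed barrier are equi-Lipschitz by coercivity. The genuine gap is step (a) of part (ii), on which (b), the $u^+_\eps$ case and the rate in (iii) all rest. It fails for three reasons.
\begin{enumerate}[(1)]
\item Monotonicity is used backwards. On $\{u_\eps\le m'\}$, \ref{itm:A3} gives $H(x/\eps,p,u_\eps)\le H(x/\eps,p,m')$, so there $u_\eps$ is a \emph{super}solution of the frozen equation, not a subsolution.
\item Integrating the subsolution inequality against a rescaled Mather measure $\mu$ of $H(\cdot,\cdot,\theta)$ gives $\int L(x,v,u_\eps)\,d\mu\ge -c$. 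If $u_\eps<\theta$ on the projected support, it also gives $\int L(x,v,u_\eps)\,d\mu\ge -\overline H(0,\theta)$. These are two lower bounds on the same integral, fully compatible with $\overline H(0,\theta)<c$, so the ``quantitative margin'' contradicts nothing. Using instead a measure generated by a backward calibrated curve of $u_\eps$, for which $\int (L(x,v,u_\eps)+c)\,d\mu=0$, you would get $\max u_\eps\ge\theta_-$, but only that.
\item Upgrading a one-point bound to $\min u_\eps\ge\theta_--C\eps$ via ``$|du_\eps|\le L$ uniformly by coercivity and boundedness'' is circular. \ref{itm:A2} is only pointwise in $\theta$, and $H$ decreases as $\theta$ decreases. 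So any gradient bound drawn from $H(x/\eps,du_\eps,u_\eps)\le c$ needs a lower bound on $u_\eps$, which is exactly what (a) is supposed to prove.
\end{enumerate}

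The missing idea is pointwise comparison with an explicit \emph{strict} subsolution; this needs no a priori information on $u_\eps$. For $\delta\in(0,1)$ put $\theta_\delta=\theta_--\delta$, so $c_\delta:=\overline H(0,\theta_\delta)<c$, and let $w_\delta$ be a corrector with $w_\delta(0)=0$. Since $H(x,dw_\delta,\theta_--1)\le c$, coercivity gives $\|w_\delta\|_{L^\infty}+\|dw_\delta\|_{L^\infty}\le M$ uniformly in $\delta$. The function $\theta_\delta+\eps w_\delta(x/\eps)-\eps\|w_\delta\|_{L^\infty}\le\theta_\delta$ is then a subsolution of \eqref{eq:HJ-ep} with right-hand side $c_\delta<c$. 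Proposition \ref{prop:ss}-(iii) now gives $u_\eps\ge\theta_--\delta-2M\eps$ for every solution and every $\eps$; plain doubling of variables also works, using the uniform gap $c-c_\delta$ and monotonicity in $u$. Let $\delta\to0$. Only after this do the equi-Lipschitz bound and the well-posedness of $u^-_\eps$ follow.

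The $u^+_\eps$ case is not symmetric. You need the mirror comparison: a strict supersolution built at $\theta_++\delta$ lies above every subsolution. Moreover, a supremum of solutions is automatically only a subsolution, so its supersolution property needs a separate argument. The paper handles this by taking the infimum of minimal solutions at levels $c'\in(c,c_0)$, which is where $c\in\mathrm{int}\,\cC$ enters.

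Finally, in (iii) the claim $\overline H(p,\theta)\ge\overline H(0,\theta)$ is false in general, since $0$ need not minimize $\overline H(\cdot,\theta)$ (e.g.\ $H=\frac12|p+P_0|^2$). It is also unnecessary: at a near-maximum the test gradient tends to $0$, and continuity of $\overline H$ suffices.
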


\begin{rmk}\label{rmk:1st}
 As necessary explanation of Theorem \ref{thm:main}, some remarks are listed in order:
\begin{enumerate}[(i)]
\item If ${\rm int}\;\cC\neq\emptyset$, then for almost every $c\in\cC$, $I(c)$ is a singleton (see Lemma \ref{lem:I-single} for the proof). That implies Theorem \ref{thm:main}-(iii) is available for rather general values of $\cC$. However, there indeed exists an Hamiltonian $H(x,p,\theta)$ with non-trivial $\theta-$dependence, such that ${\rm int}\;\cC=\emptyset$ (see Example \ref{ex:sol1}). Such an exception stops us from considering the related  homogenization problem.
\item To make the solution $u_\eps$ of \eqref{eq:HJ-ep} be unique, we need certain {\bf ordinal Mather measure set} $\mathfrak M_-(u_\eps)$ be empty (see Definition \ref{defn:ord-M}). This condition is derived from a {\bf weak comparison principle}  established in Proposition \ref{prop:ss}. Unlike the classical comparison principle, our weak one only allows comparison between any subsolution and any solution of \eqref{eq:HJ-ep}; it does not permit replacing the solution with a supersolution.
\item When $I(c)$ is a singleton for some $c\in\cC$, we can easily get that ${\rm int}\;\cC\neq\emptyset$. Later, in Lemma \ref{lem:sgt}, we obtain that $\mathfrak M_-(\theta)=\emptyset$ is a sufficient condition for $I(c)$ being singleton, where $\overline H(0,\theta)=c$. 
If $I(c)$ is not a singleton, then \eqref{eq:HJ-overline} admits multiple solutions in ${\rm BUC}(\R^n,\R)$ (see Example \ref{ex:sol2}). So the associated homogenization problem is not well posed either. 
\end{enumerate}
\end{rmk}

Assumptions \ref{itm:A1}--\ref{itm:A3} are quite general. Under slightly stronger hypotheses, $I(c)$ can be related to the uniqueness of $u_\varepsilon$, as stated in the following corollary.

\begin{cor}\label{cor:homo}
Assume \ref{itm:A1}--\ref{itm:A3} and any one of the two:
\begin{itemize}
\item[\namedlabel{itm:A3'}{$(\mathcal{A}'_3)$}] $\partial_{\theta}H > 0$ for any $(x,p,\theta)\in \T^{n} \times \R^n \times \R$.
\item[\namedlabel{itm:A8}{$(\mathcal{A}''_3)$}]$H$ is convex in $\theta$ for every $(x,p) \in \T^n\times\R^n $.
\end{itemize}
For any $c \in \cC$ such that $I(c)=\{\theta\}$ is a singleton, there exists only one solution $u_\eps$ of \eqref{eq:HJ-ep}. Moreover, $u_\eps$ converges to $u\equiv \theta$ with the rate 
\[
\Vert u_{\varepsilon} - u \Vert_{L^\infty(\R^n)} \leq C\varepsilon
\]
for some constant $C=C(H,c)>0$. 
\end{cor}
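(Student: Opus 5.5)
Since Theorem~\ref{thm:main}-(iii) already gives the convergence rate for \emph{any} solution $u_\eps$ of \eqref{eq:HJ-ep} once $I(c)=\{\theta\}$, the only new content of Corollary~\ref{cor:homo} is uniqueness of $u_\eps$. My plan is to derive uniqueness from the weak comparison principle of Proposition~\ref{prop:ss}. By Remark~\ref{rmk:1st}-(ii), it suffices to show that the ordinal Mather measure set $\mathfrak M_-(u_\eps)$ is empty for every solution $u_\eps$. Equivalently, I can run a direct comparison between two solutions $u_\eps, v_\eps$ along Mather measures of the $\eps$-scaled problem. I treat the two alternative hypotheses separately.

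\emph{Case \ref{itm:A3'}.} With $\partial_\theta H>0$ everywhere, I will use the standard comparison principle for strictly monotone equations. First I obtain Lipschitz bounds: superlinearity \ref{itm:A2} together with the fact that solutions take values in a bounded range (from the $\theta$-bounds in Theorem~\ref{thm:main}) makes any solution Lipschitz with a uniform constant. On the compact set $\eps\T^n\times \bar B_R\times[-R,R]$ one then has $\partial_\theta H\ge\delta>0$ by continuity. The usual doubling-of-variables argument now gives: if $u$ is a subsolution and $v$ a supersolution, then $\max(u-v)\le0$. Swapping the roles of $u$ and $v$ yields uniqueness. This case should be routine.

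\emph{Case \ref{itm:A8}.} Here $\partial_\theta H$ may vanish, and I would argue via convexity in $(p,\theta)$. Suppose $u,v$ are two distinct solutions, and set $w=\tfrac12(u+v)$. By joint convexity (convex in $p$ and in $\theta$ separately is not enough, so I should check whether the paper implicitly needs joint convexity; if not, I would first pass to ordered solutions, which is justified by the global ordering structure, and use $\max/\min$ of solutions), $w$ is a subsolution. Then Proposition~\ref{prop:ss}, applied to $w$ against $u$ and against $v$, forces $w\le u$ and $w\le v$, hence $u=v$ wherever the comparison applies.

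The remaining obstruction is the ordinal Mather measures: on their support the comparison only gives equality on a set, not globally. To handle this I use Lemma~\ref{lem:sgt}-type reasoning. The hypothesis $I(c)=\{\theta\}$ together with convexity in $\theta$ should force $\theta\mapsto\overline H(0,\theta)$ to be strictly increasing at the level $c$. Indeed, $\overline H(0,\cdot)$ is convex, nondecreasing, and equal to $c$ only at $\theta$, so it cannot be flat there. Strict increase in turn means that any Mather measure at $\theta$ charges a region where $\partial_\theta H>0$, so $\mathfrak M_-(u_\eps)=\emptyset$, and Proposition~\ref{prop:ss} then gives uniqueness.

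I expect the main obstacle to be this last link: passing from strict monotonicity of $\overline H(0,\cdot)$, obtained from convexity, to the emptiness of ordinal Mather measures for the $\eps$-problem. I would try first to represent $\partial_\theta^+\overline H(0,\theta)$ as $\int\partial_\theta H\,d\mu$ maximized over Mather measures $\mu$, using convexity, and then rescale by $\eps$.
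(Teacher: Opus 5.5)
Your reduction is the right one: the rate for every solution is already in Theorem~\ref{thm:main}-(iii), so only uniqueness is at stake. Your case \ref{itm:A3'} is fine. With a Lipschitz bound and $\partial_\theta H\ge\delta>0$ on the relevant compact set, the classical doubling-of-variables comparison for strictly monotone equations goes through. The paper handles this case in one line instead: $\partial_\theta L<0$ everywhere, so $\int\partial_\theta L\,d\mu<0$ for every probability measure and $\mathfrak M_-(u_\eps)=\emptyset$. Your route is longer but does not rely on Lemma~\ref{lem:comparision}.

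The gap is in case \ref{itm:A8}. The midpoint argument does not close. Proposition~\ref{prop:ss} yields $w\le u$ for a non-strict subsolution $w$ only under the measure condition \eqref{eq:measure-order}, which is exactly what is unknown. The Mather-measure argument aims at the wrong objects. $\mathfrak M_-(u_\eps)$ consists of minimizing measures for $L(x/\eps,v,u_\eps(x))$, with the non-constant solution in the last slot, not for the constant-level Lagrangian $L(\cdot,\cdot,\theta)$. Knowing that Mather measures at level $\theta$ charge $\{\partial_\theta H>0\}$, or having a formula for $\partial^+_\theta\overline H(0,\theta)$, says nothing about these measures, and rescaling in $\eps$ does not bridge this. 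Moreover, strict increase of $\overline H(0,\cdot)$ across $c$, i.e.\ $\overline H(0,\theta_1)<c$ for $\theta_1<\theta$, is already the content of $I(c)=\{\theta\}$; convexity is not what gives it.

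The missing idea is to use an ordinal measure itself as a competitor at a constant level \emph{below} the solution. Let $\mu\in\mathfrak M_-(u_\eps)$. If $L$ is concave in $\theta$, then $\partial_\theta L\le 0$ is non-increasing in $\theta$. So $\int\partial_\theta L(x/\eps,v,u_\eps(x))\,d\mu=0$ forces $\partial_\theta L(x/\eps,v,s)=0$ for all $s\le u_\eps(x)$, $\mu$-a.e. Hence $\int L(x/\eps,v,\theta_1)\,d\mu=\int L(x/\eps,v,u_\eps(x))\,d\mu=-c$ for a constant $\theta_1<\min\{\theta,\min u_\eps\}$. The push-forward of $\mu$ under $(x,v)\mapsto(x/\eps,v)$ is a holonomic probability measure on $\T^n$. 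Therefore $-\overline H(0,\theta_1)\le -c$, contradicting $\overline H(0,\theta_1)<c$.

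Your worry about joint convexity is justified. This step needs $L$ concave in $\theta$. That follows from convexity of $H$ jointly in $(p,\theta)$, since $-L=\inf_p[H-\langle p,v\rangle]$ is a partial minimization. It does not follow from convexity in $\theta$ for each fixed $p$. For example, $H=\tfrac12|p|^2e^{\theta}$ is convex in $p$ and in $\theta$, yet $L=\tfrac12|v|^2e^{-\theta}$ is convex in $\theta$ with $\partial_\theta L$ increasing. So for this argument, \ref{itm:A8} has to be read as joint convexity, or concavity of $L$ in $\theta$ must be assumed outright.
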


Next, we present our quantitative homogenization result for equation \eqref{eq:HJ-ep-2nd}, which the following additional assumptions are made: 
\begin{itemize}
    \item[\namedlabel{itm:A4}{$(\mathcal{A}_4)$}] $H\in C^1(\T^n\times\R^n\times\R,\R)$.

    \item[\namedlabel{itm:A5}{$(\mathcal{A}_5)$}] $\partial_\theta H(x,p,\theta) \leq \rho^*$ for some constant $\rho^*\in (0,\infty)$ uniformly in $(x,p)\in \T^n\times \R^n$. 

    \item[\namedlabel{itm:A6}{$(\mathcal{A}_6)$}] There exist $m>1$ and constants $\Lambda_0 \in (0,1], M_0\geq 1$ such that, for every $p,q\in \R^n$ and $x,y\in \T^n$ we have
    \begin{equation*}
        \Lambda_0|p|^m - M_0 \leq H(x,p,0) . 
    \end{equation*}
    \item[\namedlabel{itm:A7}{$(\mathcal{A}_7)$}] For $\theta\in [-\ell, \ell]\subset \R$, there exists constants $\Lambda_\ell, M_\ell$ such that 
    \begin{equation*}
        |H(x,p,\theta) - H(y,p,\theta)| \leq (\Lambda_\ell|p|^m +M_\ell)|x-y|, \qquad x,y\in \T^n. 
    \end{equation*}
\end{itemize}

\begin{thm}[Second-order problem]\label{thm:main-2} 
Assume \ref{itm:A1}--\ref{itm:A7}.
\begin{enumerate}[(i)]
\item There exists an admissible set 
\begin{equation}\label{eq:C1def}
	\cC_1:=\{\overline H(0,\theta)\in\R ~|~ \eqref{eq:erg-const-2nd} \text{ admits a continuous solution for }p=0, \theta\in\R\}
\end{equation}
such that for any $c\in\cC_1$ fixed, equation \eqref{eq:HJ-ep-2nd} and \eqref{eq:HJ-overline} are simultaneously solvable for all $\eps>0$. 
\item  For any fixed $c \in \cC_1$, the solutions to \eqref{eq:HJ-ep-2nd} differ only by constants.
\item Denote by $I(c):=\{\theta\in\R| \overline H(0,\theta)=c\}$.
For any \textb{$c \in {\rm int}\cC_1$} such that $\inf I(c) >- \infty$, the function 
\[
u^-_\eps:=\inf\{\om\in C(\eps\T^n,\R)|\om\text{ is a solution of } \eqref{eq:HJ-ep-2nd} \}
\]
presents as the minimal solution to \eqref{eq:HJ-ep-2nd} and there exists a constant $C:=C(H,c)>0$ such that $\Vert u_\eps^- - \inf I(c) \Vert_{L^\infty(\R^n)} \leq C \varepsilon$  for $\eps\in(0,1]$. Similarly, if  $\sup I(c) < +\infty$, the function 
\[
u_\eps^+:=\sup\{\om\in C(\eps\T^n,\R)|\om\text{ is a solution of } \eqref{eq:HJ-ep-2nd} \}
\]
presents as the maximal solution to \eqref{eq:HJ-ep-2nd} and $\Vert u_\eps^+ - \sup I(c) \Vert_{L^\infty(\R^n)} \leq C \varepsilon$ for $\eps\in(0,1]$.
\item
 For any $c \in \mathcal{C}_1$ such that $I(c)=\{\theta\}$ is a singleton, $u\equiv \theta$ in the unique solution of  equation \eqref{eq:HJ-overline} in ${\rm BUC}(\R^n,\R)$.  Consequently, there exists a constant $C:=C(H,c)>0$ such that for any solution $u_\eps$ of  equation \eqref{eq:HJ-ep-2nd}, $\Vert u_{\varepsilon} - u \Vert_{L^\infty(\R^n)} \leq C \varepsilon$ for $\eps\in(0,1]$.
\end{enumerate}
\end{thm}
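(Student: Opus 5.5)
We have to plan a proof of Theorem \ref{thm:main-2}. The plan follows the first-order scheme, with the second-order specific ingredients being existence via vanishing discount and a strong maximum principle.

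\textbf{Step (i): existence.} Fix $c=\overline H(0,\theta_0)\in\cC_1$, and let $v$ solve \eqref{eq:erg-const-2nd} with $p=0$, $\theta=\theta_0$. Then $u\equiv\theta_0$ solves \eqref{eq:HJ-overline}. For \eqref{eq:HJ-ep-2nd} I would use the vanishing discount procedure: solve
\[
\lambda w+H(x/\eps,dw,w)=\eps\Delta w+c
\]
on $\eps\T^n$, which is uniquely solvable by Perron's method since it is strictly monotone in $w$. Uniform bounds in $\lambda$ come from barriers. The functions $\theta_0+\eps v(x/\eps)\pm\delta$, after a shift in $\theta$, serve as sub/supersolutions, using \ref{itm:A3} and \ref{itm:A5} to control the $\theta$-dependence. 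A uniform Lipschitz bound comes from the Bernstein estimate of Proposition~\ref{prop:Bernstein}, which uses \ref{itm:A6}--\ref{itm:A7}. Passing $\lambda\to0$ along a subsequence then gives a solution. The key point is that $\eps v(x/\eps)$ exactly solves \eqref{eq:HJ-ep-2nd} with $u$ replaced by $\theta_0$, because the scaling $\eps\Delta_x=\eps^{-1}\Delta_y$ matches.

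\textbf{Step (ii): solutions differ by constants.} Let $u_1,u_2$ be two solutions. Their difference $w=u_1-u_2$ satisfies a linear equation
\[
b(x)\cdot Dw+a(x)w=\eps\Delta w,\qquad a\ge0,
\]
obtained by the mean value theorem using $H\in C^1$ (\ref{itm:A4}); the solutions are classical by elliptic regularity. By the strong maximum principle, $w$ attains its maximum and minimum on the torus. At a positive maximum, $aw=0$ forces $a=0$ there, and the strong maximum principle still applies on the set where $w=\max w$. Hence $w$ is constant. I expect this step to need care: one must ensure the positive-max case gives constancy rather than a contradiction. The Hopf/strong maximum principle with $a\ge0$ applied to $w-\max w$ handles it.

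\textbf{Steps (iii)--(iv): rates.} By (ii), every solution has the form $u_\eps=\eps v_\eps(x/\eps)+\theta_\eps$ for a suitable constant $\theta_\eps$. The function $v_\eps$ is Lipschitz, so its oscillation is $O(1)$, which gives $\mathrm{osc}\,u_\eps\le C\eps$. Integrating the equation against a Mather measure (or simply evaluating at max/min points of $u_\eps$) then shows that $\overline H(0,\cdot)$ evaluated near the values of $u_\eps$ equals $c$ up to $O(\eps)$ errors. Monotonicity of $\overline H(0,\cdot)$ in $\theta$ (from \ref{itm:A3}) then pins $\min u_\eps$ and $\max u_\eps$ within $O(\eps)$ of $I(c)$. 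For the extremal solutions, the pair $(u_\eps^-,\inf I(c))$ comes from comparison with the explicit solution $\inf I(c)+\eps v(x/\eps)$ plus a constant. For $c\in\mathrm{int}\,\cC_1$, the $O(\eps)$ margin is absorbed by a nearby $c'$.

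\textbf{Uniqueness of the effective solution.} When $I(c)=\{\theta\}$ is a singleton, uniqueness of the BUC solution of $\overline H(du,u)=c$ follows because $\overline H$ is convex in $p$ with minimum at $p=0$. Sup/inf convolution arguments then force $\sup u$ and $\inf u$ to lie in $I(c)$. The main obstacle is the quantitative step, namely converting oscillation smallness into closeness to $\inf I(c)$ with a uniform constant.
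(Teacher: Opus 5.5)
Your steps (i) and (ii) follow the paper's route: vanishing discount with the Bernstein estimate of Proposition~\ref{prop:Bernstein}, then linearization plus the strong maximum principle, as in Lemma~\ref{lem:cp-2nd} and Proposition~\ref{prop:cp-vis}-(i). The quantitative step (iii)--(iv), however, does not work as written.

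\textbf{The gap.} An estimate $|\overline H(0,\theta_\eps)-c|\le C\eps$ plus monotonicity does not give $\mathrm{dist}(\theta_\eps,I(c))=O(\eps)$. That would require $\overline H(0,\cdot)$ to grow at least linearly away from $I(c)$, and nothing in \ref{itm:A1}--\ref{itm:A7} provides this. If $\overline H(0,\theta)-c$ behaves like $(\theta-\sup I(c))^3$ to the right of $I(c)$, you only get $\eps^{1/3}$. Invoking $c\in\mathrm{int}\,\cC_1$ does not repair this, since knowing $c+C\eps\in\cC_1$ gives no quantitative control of $\sup I(c+C\eps)-\sup I(c)$. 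There is a second problem. $\inf I(c)+\eps v(x/\eps)$ is not a solution of \eqref{eq:HJ-ep-2nd}, and your (ii) only compares two solutions. So the comparison you invoke for $u_\eps^-$ is not yet available.

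\textbf{The missing idea: freeze $\theta$ one-sidedly, so there is no $O(\eps)$ error at all.} By \ref{itm:A3}, every solution satisfies $H(\tfrac{x}{\eps},du_\eps,\min u_\eps)\le c+\eps\Dt u_\eps\le H(\tfrac{x}{\eps},du_\eps,\max u_\eps)$ exactly. Hence $\tilde u_\eps(y):=\eps^{-1}u_\eps(\eps y)$ is a $C^2$ subsolution (resp.\ supersolution) of the cell problem at $\theta=\min u_\eps$ (resp.\ $\max u_\eps$). This gives $\overline H(0,\min u_\eps)\le c\le\overline H(0,\max u_\eps)$: use the min--max formula on one side, and touch the corrector $u_\theta$ at a maximum of $u_\theta-\tilde u_\eps$ on the other. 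Continuity of $\overline H(0,\cdot)$ then puts a point of $I(c)$ in $[\min u_\eps,\max u_\eps]$. Your oscillation bound then gives $u_\eps\ge\inf I(c)-C\eps$ for every solution.

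The matching bound $u_\eps^-\le\theta_-+C\eps$, with $\theta_-:=\inf I(c)$, needs a solution trapped near $\theta_-$. This is why the paper discounts as $\lambda w+H=\eps\Dt w+c+\lambda\theta_-$. That makes $\theta_-+\eps u_{\theta_-}(x/\eps)\pm\eps\|u_{\theta_-}\|_\infty$ (your $v$ at $\theta_-$) exact barriers uniformly in $\lambda$. With right-hand side $c$ alone, the term $\lambda w$ spoils them and the limit need not stay near $\theta_-$. That $u^-_\eps$ is itself a solution then follows from (ii), since the set of admissible constant shifts is closed and bounded below.

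\textbf{How the paper does it.} The paper reaches the same bounds by extending your linearization to Lemma~\ref{lem:cp-2nd}, which compares a subsolution with a supersolution for different frozen functions. It then compares solutions with $\theta_-+\eps u_{\theta_-}(x/\eps)\pm2\eps\|u_{\theta_-}\|_\infty$, after excluding the alternative in which the difference is a positive constant. Neither argument actually uses $c\in\mathrm{int}\,\cC_1$.

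\textbf{Effective uniqueness.} $\overline H(\cdot,\theta)$ need not be minimized at $p=0$ (take $H=\tfrac12|p-e_1|^2+f(\theta)$). Uniqueness of the effective solution comes solely from the translation-envelope argument of Lemma~\ref{lem:sgt-2}, namely $\overline H(0,\sup u)\le c\le\overline H(0,\inf u)$ together with monotonicity in $\theta$.
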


\begin{rmk} As in the first-order case, we make some remarks as follows.
\begin{enumerate}[(i)]
\item Theorem \ref{thm:main-2}-(ii) implies a global comparability of solutions of \eqref{eq:HJ-ep-2nd}. Not like the first-order case, the viscous term improves the Mather measures to be of Lebesgue type (see Lemma \ref{lem:b-con}), so accordingly the comparison principle becomes simpler. \item To make the solution $u_\eps$ of \eqref{eq:HJ-ep-2nd} be unique, we do not need to define certain ordinal Mather measures anymore. Instead, we propose a criterion \eqref{eq:uni-2nd} in Proposition \ref{prop:cp-vis}-(iv). We also propose a criterion \eqref{eq:sing-I(c)} in Lemma \ref{lem:sgt-3} to identify whether $I(c)$ is a singleton or not. Similarly to Remark \ref{rmk:1st}-(iii), $I(c)$ is a singleton implies that ${\rm int}\;\cC_1\neq\emptyset$.
\item If we replace the term $\eps\Dt u$ to a form $\eps\cdot{\rm tr}\left(A\left(\frac{x}{\varepsilon}\right)D^2u\right)$ with $A(\cdot):\T^n\rightarrow \mathbb S^{n\times n}$ being a positive definite matrix, Theorem \ref{thm:main-2} also holds. 
\end{enumerate}
\end{rmk}

At last, we show that certain strengthened assumptions can help to clarify the relation between $I(c)$ and the uniqueness of $u_\eps$:
\begin{cor}\label{cor:homo2}
Assume \ref{itm:A1}--\ref{itm:A7} and either \ref{itm:A3'} or \ref{itm:A8}. For any $c \in \cC_1$ such that $I(c)$ is a singleton, there exists only one solution $u_\eps$ of \eqref{eq:HJ-ep}. Moreover, $u_\eps$ converges to $u\equiv \theta$ with the rate 
\[
\Vert u_{\varepsilon} - u \Vert_{L^\infty(\R^n)} \leq C\varepsilon
\]
for some constant $C=C(H,c)>0$. 
\end{cor}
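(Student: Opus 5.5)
The plan is to reduce the statement to the first-order theory, that is, to Proposition \ref{prop:ss}, Theorem \ref{thm:main} and Corollary \ref{cor:homo}, since the equation in question is \eqref{eq:HJ-ep}. Assumptions \ref{itm:A4}--\ref{itm:A7} will only provide extra regularity (e.g.\ Lipschitz bounds), which is harmless. The proof splits into three parts: uniqueness of solutions of \eqref{eq:HJ-ep}; identification of the limit $\theta$ through $I(c)$; and the $\mathcal{O}(\eps)$ rate.

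\emph{Uniqueness.} Let $u_\eps,v_\eps\in C(\eps\T^n,\R)$ be two solutions of \eqref{eq:HJ-ep}.
\begin{itemize}
\item Under \ref{itm:A3'}, $H$ is strictly increasing in $\theta$. Superlinearity \ref{itm:A2} gives Lipschitz bounds on solutions, and the classical doubling-of-variables comparison principle on the compact torus $\eps\T^n$ then yields $u_\eps\le v_\eps$ and $v_\eps\le u_\eps$.
\item Under \ref{itm:A8}, I would use the weak comparison principle, Proposition \ref{prop:ss}. It gives uniqueness as soon as the ordinal Mather set $\mathfrak M_-(u_\eps)$ is empty (Remark \ref{rmk:1st}-(ii)). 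Suppose two distinct solutions existed. Convexity in $\theta$ makes $\frac12(u_\eps+v_\eps)$ a subsolution, and comparing it with either solution through Proposition \ref{prop:ss} forces $\partial_\theta H=0$ on the supports of the relevant Mather measures, over a whole interval of $\theta$-values. Convexity in $\theta$ then propagates $\partial_\theta H\equiv 0$ on those supports to the left. Consequently $\overline H(0,\cdot)$ is constant on a nontrivial interval, which contradicts $I(c)=\{\theta\}$. This is essentially the argument behind Lemma \ref{lem:sgt} and Corollary \ref{cor:homo}, which I would invoke directly.
\end{itemize}

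\emph{Existence and rate.} The hypothesis is stated for $c\in\cC_1$, whereas Theorem \ref{thm:main} is formulated for $c\in\cC$; this mismatch is the step I expect to be the main obstacle. I would proceed as follows.
\begin{itemize}
\item First show that whenever \eqref{eq:HJ-ep} has a solution $u_\eps$, we get $c\in\cC$ and $\overline H(0,\theta_\pm)=c$ for suitable constants $\theta_\pm$. To do this, evaluate the equation at the maximum and minimum points of $u_\eps$, and combine this with the monotonicity \ref{itm:A3} and the characterization of $\overline H(0,\cdot)$ as the unique constant in \eqref{eq:erg-const-1st}. This is the same squeezing used to obtain $u_\eps^\pm$ in Theorem \ref{thm:main}-(ii).
\item Given $c\in\cC$ with $I(c)=\{\theta\}$, Theorem \ref{thm:main}-(iii) states that $u\equiv\theta$ is the unique ${\rm BUC}$ solution of \eqref{eq:HJ-overline}. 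It also gives $\|u_\eps-\theta\|_{L^\infty}\le C\eps$ for every solution, hence for the unique one, with $C=C(H,c)$.
\item If the statement is read as including existence, then one must show $c\in\cC$ from $c\in\cC_1$ together with $I(c)$ being a singleton. Here I would first try the connectedness of $\cC$ and $\cC_1$ together with the monotonicity of both effective Hamiltonians in $\theta$. Failing that, I would try a vanishing-viscosity argument that passes the second-order cell problems \eqref{eq:erg-const-2nd} to the limit, using the Bernstein-type bounds of Proposition \ref{prop:Bernstein}.
\end{itemize}
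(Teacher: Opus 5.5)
\noindent Your proposal addresses the wrong equation. The reference to \eqref{eq:HJ-ep} in this corollary is a slip for \eqref{eq:HJ-ep-2nd}. The corollary is the viscous analogue of Corollary \ref{cor:homo}, which is why it assumes \ref{itm:A4}--\ref{itm:A7} (the hypotheses of the second-order theory, not mere extra regularity) and takes $c\in\cC_1$. Its $I(c)$ is the one from Theorem \ref{thm:main-2}, defined through the viscous cell problem \eqref{eq:erg-const-2nd}, and the paper's proof works with $c+\eps\Dt u_\eps$ throughout.

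The ``main obstacle'' you flag is a symptom of this misreading. If $I(c)$ is the first-order set, then $I(c)\neq\emptyset$ already gives $c\in\cC$, and the statement collapses to Corollary \ref{cor:homo}. If $I(c)$ is the viscous set, the first-order statement is false. For instance, $H=\frac12|p|^2+V(x)+\theta$ with $V$ smooth and nonconstant satisfies \ref{itm:A1}--\ref{itm:A7}, \ref{itm:A3'} and \ref{itm:A8}. Its first-order effective Hamiltonian is $\max V+\theta$. The viscous one is $c_v+\theta$ with $c_v<\max V$, since the Hopf--Cole transform turns the cell problem into a principal eigenvalue problem. Every $I(c)$ is then a singleton, yet by Theorem \ref{thm:main} the solutions of \eqref{eq:HJ-ep} converge to $c-\max V$, not to $c-c_v$. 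Your second bullet hides this by silently swapping the viscous $I(c)$ for the first-order one. No connectedness or vanishing-viscosity argument can reconcile $\cC$ and $\cC_1$, because \eqref{eq:erg-const-2nd} has viscosity coefficient $1$ and no parameter to send to zero. Separately, in your \ref{itm:A8} sketch, $\frac12(u_\eps+v_\eps)$ is a subsolution only if $H$ is jointly convex in $(p,\theta)$, which is not assumed.

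What is missing is the uniqueness argument for \eqref{eq:HJ-ep-2nd}. By Proposition \ref{prop:cp-vis}-(i), which rests on the strong maximum principle for the linearized equation satisfied by the difference of two $C^2$ solutions, any two solutions differ by a constant $k$. If $k\neq0$, then $u_\eps$ and $u_\eps+k$ have the same derivatives. Monotonicity then makes $H(\frac{x}{\eps},du_\eps(x),\cdot)$ constant between $u_\eps(x)$ and $u_\eps(x)+k$, so $\partial_\theta H(\frac{x}{\eps},du_\eps(x),u_\eps(x))=0$ for every $x$. This is exactly what Proposition \ref{prop:cp-vis}-(iv) excludes once \eqref{eq:uni-2nd} holds.
\begin{itemize}
\item Under \ref{itm:A3'}, \eqref{eq:uni-2nd} is immediate.
\item Under \ref{itm:A8}, $\partial_\theta H$ is nonnegative and nondecreasing in $\theta$. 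So if it vanished along $(\frac{x}{\eps},du_\eps(x),u_\eps(x))$ for all $x$, it would vanish at every $s\le u_\eps(x)$. Then $u_\eps$ also solves the equation with $\theta$ frozen at a constant $\theta_1<\min\{\theta_0,\min u_\eps\}$. After rescaling $y=x/\eps$, uniqueness of the viscous ergodic constant gives $\overline H(0,\theta_1)=c$, contradicting $I(c)=\{\theta_0\}$.
\end{itemize}
Existence and the $\mathcal O(\eps)$ rate then follow from Theorem \ref{thm:main-2}-(i) and (iv), not from Theorem \ref{thm:main}.
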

\noindent{\bf Organization of the paper.} In Sec. \ref{s2}, we figure out a new type comparison principle for solutions $u_\eps$ of \eqref{eq:HJ-ep}, via which we give the proof of  Theorem \ref{thm:main} and Corollary \ref{cor:homo}. In Sec. \ref{s3} we construct examples to explain all possible cases related with the criterion in Theorem \ref{thm:main}.
 In Sec. \ref{s4}, we present a generalized Perron method for \eqref{eq:HJ-ep-2nd} and some regularity results for the solutions.  In Sec. \ref{s5}, we give the comparison principle and prove  
 Theorem \ref{thm:main-2} and Corollary \ref{cor:homo2}.\\

\section{Solvability and Comparison Principle for First-Order Equations}\label{s2}

\subsection{Ergodic constants} 
By standard results in weak KAM  theory \cite{fathi_pde_2005}, under assumptions \ref{itm:A1}--\ref{itm:A3} and for any $(p,\theta)\in \R^n\times \R$ there exists a unique constant $\overline{H}(p,\theta) \in \R$ such that
\begin{align}\label{eq:1stHbar}
	H(y, p + d_yv(y), \theta) = \overline{H}(p,\theta) , \qquad y\in \T^n
\end{align}
can be solved with a viscosity solution $v\in C(\T^n)$. In fact, $\overline H (p,\theta)$ can be characterized as 
\begin{align}
\overline H (p,\theta) 
	& = \min_{\phi \in C^1(\T^n,\R)}\max_{x \in \T^n}H(x,p + d \phi,\theta)	\label{eq:minmaxHbar}\\
    &= \inf 
        \big\lbrace 
            c\in \R: \;\exists\;v\in \mathrm{Lip}(\T^n): H(x,p+Dv(x)) \leq c\;\text{in}\;\T^n\;\text{in the viscosity sense}
        \big\rbrace \nonumber 
\end{align}
due to \cite[Chapter 4]{tran_hamilton-jacobi_2021}.  
Let $u(x) := \varepsilon v\left(\frac{x}{\varepsilon}\right)$ for $x\in \varepsilon\T^n$, then  
\begin{equation}\label{eq:HJ-ep-theta}
	H\left(\frac{x}{\varepsilon}, p + d_x u, \theta\right) = \overline H (p,\theta),	\qquad x\in\eps\T^n
\end{equation} 
is solvable in the sense of viscosity for any $\eps>0$. The following result can be easily drawn:
\begin{lem}\label{lem:erg-1st} Assume \ref{itm:A1}--\ref{itm:A3}. Then $\overline H:(p,\theta)\in \R^{n+1} \rightarrow \R$ is continuous and non-decreasing in $\theta$. Consequently, 
\begin{equation*}
	\mathcal{C} := \{\overline{H}(0, \theta): \theta \in \R\}
\end{equation*}
is a connected set in $\R$. 
\end{lem}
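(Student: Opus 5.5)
The plan is to work with the inf-sup characterization \eqref{eq:minmaxHbar},
\[
\overline H(p,\theta)=\min_{\phi\in C^1(\T^n,\R)}\max_{x\in\T^n}H(x,p+d\phi,\theta),
\]
and deduce each property from a corresponding property of $H$. Both continuity and monotonicity follow from comparing the quantity $\max_x H(x,p+d\phi,\theta)$ for a fixed test function $\phi$ and then taking the infimum over $\phi$.

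\emph{Monotonicity.} Let $\theta_1\le\theta_2$. By \ref{itm:A3}, $H(x,q,\theta_1)\le H(x,q,\theta_2)$ for every $(x,q)$. Hence for every $\phi\in C^1$,
\[
\max_x H(x,p+d\phi,\theta_1)\le \max_x H(x,p+d\phi,\theta_2).
\]
Taking the infimum over $\phi$ gives $\overline H(p,\theta_1)\le \overline H(p,\theta_2)$.

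\emph{Continuity.} The difficulty here is that $H$ is only uniformly continuous on compact sets, while $d\phi$ ranges over all of $\R^n$. I handle this with an a priori gradient bound.

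1. Fix a compact set $K\subset\R^{n+1}$ of pairs $(p,\theta)$. Comparing with $\phi=0$ shows that $\overline H$ is bounded above on $K$ by $M:=\max_{x\in\T^n,(p,\theta)\in K}H(x,p,\theta)$.
2. Restrict attention to near-optimal $\phi$, i.e.\ those with $\max_x H(x,p+d\phi,\theta)\le M+1$. Coercivity \ref{itm:A2}, applied uniformly in $(x,\theta)$ over the compact $\theta$-range of $K$, gives a constant $R$ with $|p+d\phi|\le R$ for all such $\phi$.
3. Consequently the minimax over $K$ involves $H$ only on the compact set $\T^n\times\overline{B}_R\times[\theta_{\min},\theta_{\max}]$. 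There $H$ has a modulus of continuity $\omega$.
4. Let $(p,\theta),(p',\theta')\in K$ and take $\phi$ near-optimal for $(p,\theta)$. We have $p'+d\phi=(p+d\phi)+(p'-p)$, and $|p'-p|$ is small. The modulus then gives
\[
\max_x H(x,p'+d\phi,\theta')\le \max_x H(x,p+d\phi,\theta)+\omega(|p-p'|+|\theta-\theta'|).
\]
For this I enlarge the radius to $R+\operatorname{diam}K$ so that $p'+d\phi$ stays inside the compact set.
5. Hence $\overline H(p',\theta')\le\overline H(p,\theta)+\omega(\cdot)+\delta$ for every $\delta>0$. By symmetry this gives continuity.

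An alternative route to continuity is stability of viscosity solutions together with uniqueness of the ergodic constant, but the minimax route is cleaner.

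\emph{Connectedness.} The map $\theta\mapsto\overline H(0,\theta)$ is a continuous function on $\R$. The image of the connected set $\R$ under a continuous map is connected, so $\cC$ is an interval.

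The only genuine obstacle is the uniform gradient bound in step 2 of the continuity argument, since superlinearity is assumed only pointwise in $(x,\theta)$. I would obtain uniformity from continuity and convexity in $p$ via a compactness argument. The convex functions $H(x,\cdot,\theta)$ depend continuously on $(x,\theta)$ in a compact set, and each has compact sublevel sets. Hence, by a standard lower semicontinuity/compactness argument, the sublevel sets $\{p: H(x,p,\theta)\le M+1\}$ are uniformly bounded.
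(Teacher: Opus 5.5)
Your proof is correct, but it reaches continuity by a different route than the paper. Monotonicity uses the same idea as the paper, which phrases it with correctors: a solution for $\theta=b$ is a subsolution for $\theta=a$ with constant $\overline H(0,b)$, and $\overline H(0,a)$ is the least constant admitting subsolutions.

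For continuity, the paper uses exactly the ``alternative route'' you set aside. For $a_n\to a$ it normalizes correctors by $u_n(0)=0$ and gets equi-Lipschitz bounds from coercivity. It then passes to a uniform limit, which solves the cell problem at $\theta=a$ with constant equal to any accumulation point $c_*$ of $\overline H(0,a_n)$. Uniqueness of the ergodic constant gives $c_*=\overline H(0,a)$.

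Your minimax argument avoids the stability and uniqueness theorems altogether, and it gives more. You get an explicit local modulus $|\overline H(p,\theta)-\overline H(p',\theta')|\le\omega(|p-p'|+|\theta-\theta'|)$ on each compact $K$. This yields joint continuity in $(p,\theta)$, which the statement asserts; the paper's argument is written only for $\theta$ at $p=0$, though it extends verbatim.

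What the paper's route buys is portability to the viscous setting of Lemma~\ref{prop:c}. There the minimax formula carries the term $-\Delta\phi$, so near-optimal test functions no longer satisfy a pointwise gradient bound. One must instead use the actual correctors together with a Bernstein-type Lipschitz estimate.

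Your uniform-coercivity step is correct, and convexity is genuinely needed there; without it, a dip of $H(x,\cdot,\theta)$ could escape to infinity as $(x,\theta)$ varies. Concretely, suppose $H(x_k,p_k,\theta_k)\le C$ with $|p_k|\to\infty$. Convexity on the segment $[0,p_k]$ gives $H(x_k,Rp_k/|p_k|,\theta_k)\le\max\{C,\max H(\cdot,0,\cdot)\}$ for each fixed $R$. Letting $k\to\infty$ contradicts coercivity of $H(x_0,\cdot,\theta_0)$ at a limit point $(x_0,\theta_0)$. The paper uses this same uniformity tacitly when it asserts equi-Lipschitz bounds ``due to \ref{itm:A2}''.
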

\proof
 Due to \ref{itm:A3}, for any $a<b$ 
\[
H(x,d_x u_b,a)\leq H(x,d_x u_b,b)\leq c(b),\quad {\rm a.e. } \;x\in M
\]
where $u_b$ (resp. $u_a$) is a solution of \eqref{eq:erg-const-1st} with $(p,\theta)=(0,b)$ (resp. $=(0,a)$), therefore, $\overline H(0,a)\leq \overline H(0,b)$ since $\overline H(0,a)$ has to be the minimal value in $\R$ such that \eqref{eq:erg-const-1st} (associated with $(p,\theta)=(0,a)$) has a subsolution. For any  sequence $\R\ni a_n\to a$ as $ n\rightarrow+\infty$, we can find a sequence of solutions $\{u_n\}_{n\in\N}$. Due to \ref{itm:A2}, $\{u_n\}_{n\in\N}$ are uniformly Lipschitz, then uniformly bounded once we impose  $u_n(0)\equiv 0$ for any $n\in\N$. Suppose $c_*$ is an accumulating point of $\overline H(0,a_n)$ as $n\rightarrow+\infty$, then the associated subsequence of $\{u_n\}_{n\in\N}$ also has an accumulating function $\om$ as $n\rightarrow +\infty$, which is exactly a solution of  
\[
H(x, d_x\om, a)=c_*,\quad x\in \T^n.
\]
Due to \cite{fathi_pde_2005}, such a $c_*\in\R$ is unique, so $c_*=\overline H(0,a)$ and $\lim_{n\rightarrow+\infty} \overline H(0,a_n)=\overline H(0,a)$ follows. 
\qed

\begin{prop}\label{prop:erg-const}
For any $c \in \mathcal{C}$, the equation \eqref{eq:HJ-ep} and equation \eqref{eq:HJ-overline} are both solvable.
\end{prop}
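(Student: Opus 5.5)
Since $c\in\mathcal{C}$, by the definition of $\mathcal{C}$ in Lemma \ref{lem:erg-1st} there is some $\theta_0\in\R$ with $\overline H(0,\theta_0)=c$. The plan is to build both solutions directly from this $\theta_0$, using the cell problem \eqref{eq:1stHbar} at $p=0$ and the scaling that gives \eqref{eq:HJ-ep-theta}.

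\emph{The effective equation \eqref{eq:HJ-overline}.} I will take the constant function $u\equiv\theta_0$ on $\R^n$. It is smooth with $d_xu\equiv 0$, so
\[
\overline H(d_xu,u)=\overline H(0,\theta_0)=c
\]
holds classically, and therefore also in the viscosity sense. Continuity of $\overline H$ (Lemma \ref{lem:erg-1st}) is not even needed here, since the function is constant.

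\emph{The oscillating equation \eqref{eq:HJ-ep}.} For $p=0$ and $\theta=\theta_0$, I will take a viscosity solution $v\in C(\T^n)$ of \eqref{eq:1stHbar}. By superlinearity \ref{itm:A2}, $v$ is Lipschitz. Setting $w(x):=\varepsilon v(x/\varepsilon)$ gives a solution of
\[
H\left(\frac{x}{\varepsilon},d_xw,\theta_0\right)=c \quad\text{on } \varepsilon\T^n.
\]
This has the frozen value $\theta_0$ in the last slot, whereas \eqref{eq:HJ-ep} needs the unknown itself there. The natural candidate is $u_\varepsilon:=\theta_0+w$. Then $d_xu_\varepsilon=d_xw$, but $u_\varepsilon$ differs from $\theta_0$ by $w$, which is $O(\varepsilon)$ but not zero. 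So $u_\varepsilon$ is not an exact solution, and this is the main obstacle.

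To overcome it, I would use a Perron-type argument with sub- and supersolutions. First normalize $v$ so that $\max v=0$; then $u_\varepsilon\le\theta_0$. By monotonicity \ref{itm:A3}, $H(y,q,u_\varepsilon)\le H(y,q,\theta_0)$ at every point, so $u_\varepsilon$ is a viscosity subsolution of \eqref{eq:HJ-ep}. Symmetrically, normalizing $\min v=0$ produces a supersolution $\bar u_\varepsilon\ge\theta_0$.

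Perron's method is problematic here because $H$ is not strictly increasing in $\theta$. So I would instead get an exact solution with a vanishing discount / fixed-point scheme. For $\lambda>0$, solve
\[
\lambda(u-\theta_0)+H\left(\frac{x}{\varepsilon},d_xu,u\right)=c .
\]
This equation has a comparison principle because of the strict monotonicity contributed by $\lambda$. Its solution $u^\lambda$ is therefore trapped between the subsolution and the supersolution above, and by \ref{itm:A2} it is uniformly Lipschitz in $\lambda$. Letting $\lambda\to0$ along a subsequence and using stability of viscosity solutions yields a solution of \eqref{eq:HJ-ep}.

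The key check is that the barrier functions remain sub- and supersolutions once the discount term is added. For the lower barrier $u_\varepsilon\le\theta_0$, the term $\lambda(u_\varepsilon-\theta_0)$ is $\le0$, so it remains a subsolution. For the upper barrier $\bar u_\varepsilon\ge\theta_0$, the term is $\ge0$, so it remains a supersolution. Comparison for the discounted problem then gives existence of $u^\lambda$ via Perron's method, with bounds independent of $\lambda$.
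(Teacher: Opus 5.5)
Your argument is correct. For the existence of $u_\eps$ it takes a detour that the paper avoids, though both proofs use the same barriers (the paper's $\theta+\eps v(x/\eps)\pm\eps\|v\|_{L^\infty}$ is your pair up to normalization) and the same constant solution $u\equiv\theta$ of \eqref{eq:HJ-overline}. The paper applies Perron's method directly to \eqref{eq:HJ-ep}, with no comparison principle, and cites \ref{itm:A2} for continuity. The obstacle you name is not really present in this first-order coercive setting. Every subsolution $w$ in the Perron class satisfies $w\ge\min\underline u$, so by \ref{itm:A3} it is a subsolution of $H(x/\eps,Dw,\min\underline u)\le c$. It is therefore Lipschitz with a constant independent of $w$, so the Perron envelope is Lipschitz and is automatically a continuous viscosity solution. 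Your vanishing-discount scheme instead gives a comparison principle at each $\lambda>0$, so the textbook Perron method applies. Since $H$ is only continuous in $x$, that comparison itself relies on the same coercivity, which keeps the doubling-variables gradient bounded. The cost is the extra limit $\lambda\to0_+$, and you handle it correctly: the discount term has the right sign on each barrier, $\lambda(u^\lambda-\theta_0)=O(\lambda)$ uniformly, and \ref{itm:A2} gives the equi-Lipschitz bounds needed for compactness and stability. This is the scheme the paper uses in the second-order case (Lemma \ref{lem:ExisTheta}), where no gradient bound comes for free and continuity of the Perron envelope is not automatic. Your route is thus more robust, but unnecessary here.
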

\proof
For any $c \in \mathcal{C}$, there exists a $\theta \in \R$ such that $\overline{H}(0, \theta) = c$ and a viscosity solution $u_{\theta, \eps}$ of the equation \eqref{eq:HJ-ep-theta} with $p=0$ due to the weak KAM theory  \cite{fathi_weak_2008}. Then we can define two functions
\[
u_{\theta,\eps}^+ := \theta+u_{\theta,\eps}(x) + \Vert u_{\theta,\eps} \Vert_{L^\infty(\eps\T^n)}, \quad
u_{\theta,\eps}^-: = \theta+u_{\theta,\eps}(x) - \Vert u_{\theta,\eps} \Vert_{L^\infty(\eps\T^n)},
\]
which are supersolution and subsolution 
of equation \eqref{eq:HJ-ep} respectively. By the Perron method, there exists a solution $u_\eps$ of \eqref{eq:HJ-ep} satisfying $u_{\theta,\eps}^-\leq u_\eps\leq u_{\theta,\eps}^+$. Due to \ref{itm:A2}, such a $u_\eps$ is Lipschitz continuous.  As for 
equation \eqref{eq:HJ-overline}, $u \equiv \theta$ is exactly a solution due to the definition of $\overline H$. \qed

\subsection{Comparison principle and  Mather measures}\label{s2.2}

For any $\phi\in C(\T^n,\R)$, we can define a continuous Hamiltonian ${\bf H}_\phi:(x,p)\in \T^n\times\R^n\rightarrow\R$ by 
\[
{\bf H}_\phi(x,p):=H(x,p,\phi(x)).
\]
By the {\it Legendre transformation}, the associated Lagrange ${\bf L}_\phi:(x,v)\in \T^n\times\R^n\rightarrow\R$ can also be represented as 
\[
{\bf L}_\phi(x,v):=L(x,v,\phi(x))=\max_p\Big(\langle p,v\rangle-H(x,p,\phi(x))\Big).
\]
Since $H(x,p,u)$ satisfies \ref{itm:A1}--\ref{itm:A3}, then we can verify that ${\bf H}_\phi(x,p)$ is convex and superlinear in $p$ for any $x\in\T^n$. So we can apply weak KAM theory to ${\bf H}_\phi$ and get:

\begin{lem} \label{lem:weak-kam} Assume \ref{itm:A1}--\ref{itm:A3}. 
\begin{enumerate}[(i)]

\item There exists a unique {\bf ergodic constant} $c_\phi = \overline{\mathbf{H}}_\phi(0)\in\R$ such that 
\begin{equation}\label{eq:hj-phi}
	{\bf H}_\phi(x, du)=c_\phi,\quad x\in\T^n
\end{equation}
admits a solution. Moreover, $c_\phi$ is the minimal value such that the above equation admits subsolutions.

\item Let $\mathcal{H}$ be the set of \textbf{holonomic measures}, i.e., probability measures $\mu$ such that
\begin{equation*}
\int_{T\T^n} |v| \, d\mu < \infty 
\qquad \text{and} \qquad
\int_{T\T^n} \langle d_x \phi, v \rangle \, d\mu = 0
\quad \text{for all } \phi \in C^1(\T^n).
\end{equation*}
Then
\begin{equation*}
c_\phi = -\inf_{\mu \in \mathcal{H}} \int L \, d\mu.
\end{equation*}
The infimum is attained by the so-called \textbf{Mather measures} (associated with $\phi$), denoted by $\mathfrak{M}(\phi)$.

\item Any solution $u$ of \eqref{eq:hj-phi} is a weak KAM solution, in the sense that:  
\begin{itemize}
\item (Dominated)
for any $x,y \in \T^n$, any absolute continuous curves $\gamma:[0,t] \rightarrow \T^n$ with $\gamma(0) = x$ and $\gamma(t) = y$,
\begin{equation*}
	u(x) - u(y) \leq \int_0^t \big( 
	{\bf L}_\phi(\gamma(s),\dot{\gamma}(s)) + c_\phi\big) ds.
\end{equation*}
We denote by $u\prec{\bf L}_\phi+c_\phi$ if $u$ is a dominated function.
\item (Calibrated)
for every $x \in \T^n$, we can find a calibrated curve $\gamma:(-\infty,0] \rightarrow \T^n$ with $\gamma(0) = x$ such that for every $t \leq t' \leq 0$, we have
\[
u(\gamma(t')) - u(\gamma(t)) = \int_t^{t'} \big( {\bf L}_\phi(\gamma(s),\dot{\gamma}(s)) + c_\phi\big)  ds.
\]
\end{itemize}
\end{enumerate}
\end{lem}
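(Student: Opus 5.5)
The plan is to reduce everything to classical weak KAM theory for the frozen Hamiltonian ${\bf H}_\phi(x,p)=H(x,p,\phi(x))$. First, check that ${\bf H}_\phi$ is continuous on $\T^n\times\R^n$, convex in $p$ by \ref{itm:A1}, and superlinear in $p$ uniformly in $x$. For the uniform superlinearity, fix $K>0$ and note that $\phi(\T^n)\subset[-\|\phi\|_\infty,\|\phi\|_\infty]$. By \ref{itm:A3}, $H(x,p,\phi(x))\ge H(x,p,-\|\phi\|_\infty)$. The pointwise superlinearity in \ref{itm:A2} at the fixed level $\theta=-\|\phi\|_\infty$ is then upgraded to uniform superlinearity on the compact set $\T^n$. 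This uses convexity: $p\mapsto H(x,p,\theta)-K|p|$ is bounded below for each $x$, and a Dini/compactness argument on the sphere $|p|=R$ combined with convexity along rays gives a uniform bound. With ${\bf H}_\phi$ a Tonelli-type (continuous, convex, uniformly superlinear) Hamiltonian, the Legendre transform ${\bf L}_\phi(x,v)=L(x,v,\phi(x))$ is finite, convex and superlinear in $v$, and bounded below.

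For (i), I would invoke the standard ergodic-constant result (Lions--Papanicolaou--Varadhan, or \cite{fathi_pde_2005}), exactly as in \eqref{eq:minmaxHbar}, to get a unique $c_\phi$ for which ${\bf H}_\phi(x,du)=c_\phi$ has a (Lipschitz) viscosity solution. The method is discounted approximation $\lambda u_\lambda+{\bf H}_\phi(x,du_\lambda)=0$. This gives uniform Lipschitz bounds from superlinearity, so $-\lambda u_\lambda\to c_\phi$ and $u_\lambda-\min u_\lambda\to u$. Uniqueness of $c_\phi$ and its minimality among subsolution levels follow from the comparison on the compact torus: if $w$ is a subsolution at level $c<c_\phi$, then comparing $w$ with the solution $u$ on a long time horizon, or using the domination inequality below, yields a contradiction.

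For (iii), the dominated property follows because a Lipschitz viscosity subsolution satisfies ${\bf H}_\phi(x,du)\le c_\phi$ a.e. By convexity it then holds along a.e. curve after a standard mollification/Fubini perturbation argument. The Fenchel inequality $\langle du,\dot\gamma\rangle\le {\bf L}_\phi+{\bf H}_\phi$ integrates to $u(y)-u(x)\le\int({\bf L}_\phi+c_\phi)$, where the orientation follows the paper's convention with $\gamma(0)=x,\gamma(t)=y$. For the calibrated curves, I would use the Lax--Oleinik representation: $u$ is a fixed point of $T^-_t u(x)=\inf_\gamma\{u(\gamma(-t))+\int_{-t}^0{\bf L}_\phi\}+c_\phi t$. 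This holds by uniqueness for the evolution equation $\partial_t w+{\bf H}_\phi(x,dw)=c_\phi$ with a continuous Hamiltonian. Minimizers exist by Tonelli's theorem, since ${\bf L}_\phi$ is convex and superlinear and only continuous in $x$, so they are absolutely continuous. A diagonal argument over $t\to\infty$, using the uniform Lipschitz bounds of minimizers from superlinearity, gives a calibrated curve on $(-\infty,0]$.

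For (ii), I would follow Mañé/Bernard. Closing a calibrated curve's empirical measures $\frac1T\int_{-T}^0\delta_{(\gamma,\dot\gamma)}$ gives, in the limit, a holonomic measure with $\int{\bf L}_\phi\,d\mu=-c_\phi$. The bounded speed yields tightness, and the weak KAM identity yields the action. Conversely, for every $\mu\in\mathcal H$ we need $\int({\bf L}_\phi+c_\phi)d\mu\ge0$. This comes from the inequality ${\bf L}_\phi(x,v)+c_\phi\ge\langle d\psi,v\rangle$ for smooth approximate subsolutions $\psi$, with a small error: by \eqref{eq:minmaxHbar} applied to ${\bf H}_\phi$, for every $\delta>0$ there is $\psi\in C^1$ with $\max{\bf H}_\phi(x,d\psi)\le c_\phi+\delta$. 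Integrating against $\mu$ and using the holonomic condition gives $\int{\bf L}_\phi\,d\mu\ge -c_\phi-\delta$. The infimum is attained, giving the set $\mathfrak M(\phi)$.

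I expect the main obstacle to be the lack of regularity: $H$ is only continuous in $x$ here, and $\phi$ is merely continuous. There is no Euler--Lagrange flow, so every step must use the purely variational and PDE route (Tonelli minimizers, $C^1$ approximate subsolutions, and the holonomic rather than closed/invariant measures). That is why holonomic measures appear in (ii).
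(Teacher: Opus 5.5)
Your proposal is correct and takes the same route as the paper, which only observes that ${\bf H}_\phi$ is convex and superlinear and then cites standard weak KAM theory. Your uniform-in-$x$ superlinearity argument and step-by-step sketches supply what the paper leaves to citation. One correction: the domination inequality you prove, $u(y)-u(x)\le\int_0^t({\bf L}_\phi+c_\phi)\,ds$ for $\gamma(0)=x$, $\gamma(t)=y$, is the right one, and it is the form the paper itself uses in Lemma~\ref{lem:infu}. The printed $u(x)-u(y)$ is a typo and is false in general for non-reversible ${\bf L}_\phi$, so say explicitly that you are proving the corrected statement rather than that your orientation follows the paper's convention.
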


\begin{defn}\label{defn:ord-M}
We call $\mu$ one {\bf ordinal Mather measure} associated with $\phi$, if $\mu$ is a Mather measure associated with $\mathbf{H}_\phi(x,p)$ such that 
\[
\int_{\T^n\times\R^n} \partial_u L(x,v,\phi(x)) d\mu=0.
\]
We denote $\mathfrak{M}_-(\phi)$ by the set of all  ordinal Mather measures associated with $\phi$.
\end{defn}

\begin{lem}[{\cite[Theorem 4.1]{chen_convergence_2024}}]\label{lem:comparision}
Assume \ref{itm:A1}--\ref{itm:A3}. Let $u,w : \T^n \to \R$ be such that
\begin{align*}
    \mathbf{H}_\varphi(x,d_x u) := H(x,d_x u,\varphi(x)) \leq c, 
    \qquad     
    \mathbf{H}_\psi(x,d_x w) := H(x,d_x w,\psi(x)) = c. 
\end{align*}
Then one of the following two holds:
\begin{enumerate}[(i)]
    \item  The maximum of $u - w$ can be attained at a point $x_{max} \in \T^n$ where $\varphi(x_{max}) \leq \psi(x_{max})$.

    \item We can find a Lipschitz curve $\gamma : (-\infty, 0] \rightarrow \T^n$ which is both $u$-calibrated for $\mathbf{L}_\phi + c$ and $w$-calibrated for $\mathbf{L}_\psi + c$, such that for all $t \in (-\infty, 0]$,
\[
u(\gamma(t)) - w(\gamma(t)) = \max(u - w), \quad \varphi(\gamma(t)) - \psi(\gamma(t)) > 0
\]
Moreover, in this case, denoting by $K < \infty$ a Lipschitz constant for the Lipschitz curve $\gamma: (-\infty, 0] \rightarrow \T^n$, we can find a holonomic measure $\mu \in \mathfrak M_-(\psi)$ of which ${\rm supp}(\mu)$ is contained in the compact subset $\{(x,v) \in \T^n: \vert v \vert_x \leq K\}$, such that
\[
\int_{T\T^n}[L(x,v,\varphi(x))+c]d\mu(x,v) = \int_{T\T^n}[L(x,v,\psi(x))+c]d\mu(x,v)
\]
and for all $(x,v) \in {\rm supp}(\mu)$
\[
u(x) - w(x) = \max_{\T^n}(u - w), \quad \varphi(x) - \psi(x) > 0
\]
\end{enumerate}
\end{lem}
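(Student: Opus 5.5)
The plan is to work at a maximum point of $u-w$. I would use the backward calibrated curve of the solution $w$, and combine it with domination of $u$ and with the monotonicity of $L$ in $\theta$. By \ref{itm:A3}, $H$ is non-decreasing in $\theta$, so $L(x,v,\theta)=\max_p(\langle p,v\rangle-H(x,p,\theta))$ is non-increasing in $\theta$.

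First, the case split. If some maximum point $x_{max}$ of $u-w$ has $\varphi(x_{max})\le\psi(x_{max})$, we are in case (i). Otherwise $\varphi>\psi$ on the whole set $A:=\arg\max(u-w)$. Since $A$ is compact and $\varphi-\psi$ is continuous, there is then a neighbourhood $U\supset A$ on which $\varphi>\psi$.

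Next, the curve. Fix $x_0\in A$ and, by Lemma \ref{lem:weak-kam}(iii), take a curve $\gamma:(-\infty,0]\to\T^n$ with $\gamma(0)=x_0$ that is $w$-calibrated for $\mathbf L_\psi+c$. By superlinearity it is Lipschitz with some constant $K$. The function $u$ is a subsolution of $\mathbf H_\varphi\le c$, so $u\prec \mathbf L_\varphi+c$. Subtracting the domination inequality for $u$ from the calibration identity for $w$ gives, for $t\le 0$,
\[
(u-w)(x_0)-(u-w)(\gamma(t))\le\int_t^0\big(\mathbf L_\varphi-\mathbf L_\psi\big)(\gamma,\dot\gamma)\,ds .
\]
Let $T_*:=\inf\{t\le0:\gamma([t,0])\subset U\}$. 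On $(T_*,0]$ the integrand is $\le 0$, because $\varphi>\psi$ on $U$ and $L$ is non-increasing in $\theta$. Since $x_0$ is a maximum, the left-hand side is $\ge0$, so every inequality is an equality. Hence $\gamma(t)\in A$ for $t\in(T_*,0]$, $u$ is calibrated along $\gamma$ for $\mathbf L_\varphi+c$, and $\mathbf L_\varphi=\mathbf L_\psi$ a.e.\ along $\gamma$. Because $A$ is closed and $A\subset\mathrm{int}\,U$, a continuity argument forces $T_*=-\infty$. This gives the curve in (ii), with $(u-w)(\gamma(t))=\max(u-w)$ and $\varphi(\gamma(t))>\psi(\gamma(t))$ for all $t\le 0$.

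Then the measure. Define $\mu_T:=\frac1T\int_{-T}^0\delta_{(\gamma(s),\dot\gamma(s))}\,ds$. These measures are supported in the compact set $\{|v|\le K\}$, so a subsequence converges weakly to some $\mu$ as $T\to\infty$.
\begin{itemize}
\item $\mu$ is holonomic: $\int\langle d\phi,v\rangle\,d\mu_T=\frac{\phi(x_0)-\phi(\gamma(-T))}{T}\to0$ for every $\phi\in C^1(\T^n)$.
\item $\mu$ is a Mather measure for $\psi$: calibration of $w$ gives $\int(\mathbf L_\psi+c)\,d\mu_T=\frac{w(x_0)-w(\gamma(-T))}{T}\to0$. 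So $\int\mathbf L_\psi\,d\mu=-c$, and $c=c_\psi$ because $w$ solves $\mathbf H_\psi=c$ (Lemma \ref{lem:weak-kam}(i),(ii)).
\item The same computation with the calibration of $u$ yields $\int(\mathbf L_\varphi+c)\,d\mu=0=\int(\mathbf L_\psi+c)\,d\mu$.
\item Every $(x,v)\in\mathrm{supp}\,\mu$ has $x\in\overline{\gamma((-\infty,0])}\subset A$, hence $u-w=\max$ and $\varphi>\psi$ there.
\end{itemize}

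Finally, ordinality. On $\mathrm{supp}\,\mu$ we have $\mathbf L_\varphi-\mathbf L_\psi=\int_{\psi(x)}^{\varphi(x)}\partial_\theta L(x,v,\theta)\,d\theta\le0$. This is a continuous, non-positive function with zero $\mu$-integral, so it vanishes on $\mathrm{supp}\,\mu$. Since $\varphi>\psi$ there, $\partial_\theta L(x,v,\theta)=0$ for $\theta\in[\psi(x),\varphi(x)]$. In particular $\partial_uL(x,v,\psi(x))=0$, so $\int\partial_uL\,d\mu=0$ and $\mu\in\mathfrak M_-(\psi)$.

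The main obstacle I expect is regularity. Passing from a.e.-along-$\gamma$ statements to statements on $\mathrm{supp}\,\mu$ requires continuity of $L$ and of $\partial_\theta L$ in $(x,v,\theta)$. I would obtain this from the envelope formula $\partial_\theta L(x,v,\theta)=-\partial_\theta H(x,p,\theta)$ at a maximizer $p$, using \ref{itm:A3}, and approximate by strictly convex $H$ if the maximizer is not unique. The extension argument $T_*=-\infty$ also needs care, since $\gamma$ could a priori approach $\partial U$; the strict inclusion $A\subset\mathrm{int}\,U$ handles this.
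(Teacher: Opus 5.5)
The paper gives no proof of this lemma; it quotes it from \cite{chen_convergence_2024}, so there is no in-paper argument to compare yours with. Judged on its own, your proof is correct and essentially complete. It rests on standard weak KAM facts, among them the domination $u\prec\mathbf L_\varphi+c$ for \emph{subsolutions}, which Lemma~\ref{lem:weak-kam}(iii) as written states only for solutions. The steps check out as follows.
\begin{enumerate}[(a)]
\item At a maximum point, subtracting the domination of $u$ from the backward $w$-calibration gives a nonpositive integrand inside $U$ and a nonnegative left-hand side. This forces equality, so $\gamma$ stays in $A$ and $u$ is calibrated along $\gamma$.
\item Your open--closed argument correctly gives $T_*=-\infty$: $\gamma(T_*)\in\overline{\gamma((T_*,0])}\subset A\subset U$, and $U$ is open.
\item The Lipschitz bound on $\gamma$ does follow from superlinearity. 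Calibration on every subinterval, together with $\mathrm{Lip}(w)$ and $L\ge(\mathrm{Lip}(w)+1)|v|-C$, bounds $\int_t^{t'}|\dot\gamma|$ by a constant times $t'-t$.
\item The time averages $\mu_T$ then yield a holonomic Mather measure for $\psi$ with the stated support and the stated action identity.
\end{enumerate}

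The regularity obstacle you anticipate is smaller than you fear. In the ordinality step you only need continuity of $L$, to pass from $\int(\mathbf L_\varphi-\mathbf L_\psi)\,d\mu=0$ to $L(x,v,\varphi(x))=L(x,v,\psi(x))$ on $\mathrm{supp}\,\mu$. Monotonicity then makes $L(x,v,\cdot)$ constant on $[\psi(x),\varphi(x)]$. Neither continuity of $\partial_\theta L$ nor a strictly convex approximation is required. The approximation would in any case not obviously preserve the particular $\mu$ you constructed.

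The one real caveat is that this gives only the \emph{right} derivative, $\partial_u^+L(x,v,\psi(x))=0$. Without strict convexity in $p$, $L$ need not be differentiable in $\theta$. By Danskin's theorem its one-sided derivatives are $-\min\partial_\theta H$ and $-\max\partial_\theta H$ over the set of maximizing $p$, and these can differ. So the condition $\int\partial_uL\,d\mu=0$ in Definition~\ref{defn:ord-M} must be read with the one-sided derivative. This is a looseness in the paper's definition, not a gap in your argument.
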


\begin{lem}\label{lem:sgt}
For any $\theta \in I(c)$, if $\mathfrak{M}_-(\theta) = \varnothing$ then $I(c)$ is a singleton. 
\end{lem}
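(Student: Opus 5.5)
We argue by contradiction. Suppose $I(c)$ contains a point $\theta'\neq\theta$. Since $\overline H(0,\cdot)$ is non-decreasing and continuous (Lemma \ref{lem:erg-1st}), $I(c)$ is a closed interval. Hence $\overline H(0,\cdot)\equiv c$ on the segment between $\theta$ and $\theta'$.

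The key observation is a variational characterization. By Lemma \ref{lem:weak-kam}-(ii), for each constant $s$ we have $\overline H(0,s)=-\min_{\mu\in\mathcal H}\int L(x,v,s)\,d\mu$. So on the segment $J$ between $\theta$ and $\theta'$ the function $g(s):=\min_{\mu\in\mathcal H}\int L(x,v,s)\,d\mu$ is constant, equal to $-c$. Fix a Mather measure $\mu\in\mathfrak M(\theta)$; it is non-empty, and its support is compact because Mather measures of a superlinear convex Hamiltonian have compact support. Define
\[
f(s):=\int L(x,v,s)\,d\mu .
\]
Then $f(s)\ge g(s)=-c$ for every $s\in J$, with equality at $s=\theta$. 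By \ref{itm:A3} and the Legendre transform, $L$ is non-increasing in $s$; indeed $\partial_u L(x,v,s)=-\partial_\theta H(x,p,s)\le 0$ at the conjugate $p$. Thus $f$ is non-increasing.

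Now we split into cases. If $\theta'>\theta$, then for $s\in(\theta,\theta']$ monotonicity gives $f(s)\le f(\theta)=-c$, while minimality gives $f(s)\ge -c$. Hence $f\equiv -c$ on $[\theta,\theta']$, and so the right derivative $f'(\theta^+)=\int\partial_u L(x,v,\theta)\,d\mu$ vanishes. If instead $\theta'<\theta$, the same argument applies symmetrically: for $s<\theta$ we get $f(s)\ge f(\theta)=-c$ from monotonicity. This alone is not a contradiction, so we use the minimizing Mather measures at $s$ instead. Take $\mu_s\in\mathfrak M(s)$ with $s\in[\theta',\theta)$. Then
\[
-c=\int L(\cdot,s)\,d\mu_s\ \ge\ \int L(\cdot,\theta)\,d\mu_s\ \ge\ g(\theta)=-c .
\]
So $\mu_s$ is also a Mather measure at $\theta$, and $\int L(\cdot,s)\,d\mu_s=\int L(\cdot,\theta)\,d\mu_s$. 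Since $L$ is monotone in $s$, this forces $L(x,v,\cdot)$ to be constant on $[s,\theta]$ for $\mu_s$-a.e.\ $(x,v)$, and therefore $\int\partial_uL(x,v,\theta)\,d\mu_s=0$. In either case we have produced an element of $\mathfrak M_-(\theta)$, contradicting $\mathfrak M_-(\theta)=\varnothing$.

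The main technical point is differentiating under the integral: we need $\partial_u L$ to exist and be continuous, and the relevant supports to be uniformly compact. Under \ref{itm:A3} with $H$ strictly convex, $\partial_uL(x,v,u)=-\partial_\theta H(x,\partial_vL,u)$ is continuous. If $H$ is only convex, I would work with one-sided difference quotients instead, using monotonicity and dominated convergence on the compact support. The first case's derivative argument can equally be run in the pointwise a.e.\ form of the second case: $f$ constant together with a pointwise non-increasing integrand forces pointwise constancy $\mu$-a.e.\ on $[\theta,\theta']$, which gives a vanishing one-sided derivative. With that form the smoothness concern reduces to existence of $\partial_uL$ itself, as required by Definition \ref{defn:ord-M}.
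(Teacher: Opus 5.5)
Your proof is correct. In the case $\theta'>\theta$ it is exactly the paper's argument: fix $\mu\in\mathfrak M(\theta)$, then squeeze $s\mapsto\int L(x,v,s)\,d\mu$ between the minimal action $-\overline H(0,s)=-c$ and its value $-c$ at $s=\theta$, using that $L$ is non-increasing in $u$.

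Where you differ is the case $\theta'<\theta$. The paper disposes of it with ``without loss of generality $\theta_0>\theta$'', but the situation is not symmetric, because the hypothesis $\mathfrak M_-(\cdot)=\varnothing$ is placed on $\theta$ only. When $\theta=\sup I(c)$, the squeeze with $\mu\in\mathfrak M(\theta)$ yields no information. Running the squeeze from $\theta'$ would only produce an element of $\mathfrak M_-(\theta')$. Your remedy is exactly what the lemma as stated requires. You take $\mu_s\in\mathfrak M(s)$ with $s\in[\theta',\theta)$ (for $s=\theta'$ this is the measure the paper calls $\mu_0$). Monotonicity together with minimality at $\theta$ then forces $\mu_s\in\mathfrak M(\theta)$ and $L(x,v,s)=L(x,v,\theta)$ for $\mu_s$-a.e. $(x,v)$, so the left derivative at $\theta$ vanishes.

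Your pointwise formulation also improves on a second point. The paper passes from constancy of $\int L(\cdot,s)\,d\mu$ to $\int\partial_uL(\cdot,\theta)\,d\mu=0$, which tacitly differentiates under the integral sign. You instead integrate the nonnegative difference of $L$ at the two endpoints against the measure. Since the integral is zero, the difference vanishes a.e., and monotonicity then gives constancy on the whole interval. As a result you need neither compact support nor dominated convergence. You need only the existence of $\partial_uL$, which Definition~\ref{defn:ord-M} already presupposes. Integrability is automatic, since $L$ is bounded below and is monotonically dominated by a finite action.
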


\proof
We know that 
\begin{equation*}
    \overline{H}(0, \theta) = -\inf_{\mu \in \cH}\int_{T\T^n} L(x,v,\theta) d\mu, 
\end{equation*}
and the infimum can be achieved by a holonomic measure $\mu$. If $\overline{H}(0,\theta_0) = \overline{H}(0,\theta) = c$ for some $\theta_0 \neq \theta$, then every value between $\theta$ and $\theta_0$ must also belong to $I(c)$. Indeed, there exists a holonomic measure $\mu_0$ such that 
\begin{align*}
    \int_{T\T^n} L(x,v,\theta_0) d\mu_0 = \int_{T\T^n} L(x,v,\theta) d\mu = -c.
\end{align*}
Without loss of generality, we assume $\theta_0 > \theta$. The monotonicity of $L$ in the variable $\theta$ guarantees that 
\begin{equation*}
     -c=- \overline{H}(0,\theta_0)= \inf_{\mu\in \mathcal{H}} \int_{T\T^n} L(x,v,\theta_0) d\mu \leq \int_{T\T^n} L(x,v,\theta_0) d\mu \leq \int_{T\T^n} L(x,v,\theta) d\mu \leq -c    
\end{equation*}
For any $\tilde{\theta} \in (\theta,\theta_0)$, we observe that 
\begin{equation*}
    -c=\int_{T\T^n} L(x,v,\theta_0)d\mu
    \leq 
    \int_{T\T^n} L(x,v,\tilde{\theta})d\mu  \leq \int_{T\T^n} L(x,v,\theta)d\mu = -c.
\end{equation*}
We obtain that 
\begin{equation*}
    \int_{T\T^n} \partial_u L(x,v,\theta)d\mu = 0
\end{equation*}
and thus $\mu \in \mathfrak{M}_-(\theta)$, which is a contradiction to $\mathfrak{M}_-(\theta) = \emptyset$. 
 Therefore, $\theta$ is the unique value in the set $I(c)$.\qed

\begin{lem}\label{lem:sgt-2}
For any $c \in \R$ such that $I(c)$ is a singleton, the 
solution $u\in {\rm BUC}(\R^n,\R)$ of equation \eqref{eq:HJ-overline} is unique and constant.
\end{lem}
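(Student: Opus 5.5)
We must show: if $I(c)=\{\theta\}$, then any $u\in{\rm BUC}(\R^n,\R)$ solving $\overline H(du,u)=c$ in the viscosity sense satisfies $u\equiv\theta$. The constant $\theta$ is a solution, since $\overline H(0,\theta)=c$. The plan is to use the properties of $\overline H$ in Lemma \ref{lem:erg-1st} (continuity, monotonicity in $\theta$) together with convexity of $\overline H$ in $p$, which is inherited from \ref{itm:A1} via the min–max formula \eqref{eq:minmaxHbar}. We then test the equation at approximate extremum points of $u$. Since $I(c)$ is a singleton and $\theta\mapsto\overline H(0,\theta)$ is continuous and non-decreasing, we have $\overline H(0,\theta')<c$ for $\theta'<\theta$ and $\overline H(0,\theta')>c$ for $\theta'>\theta$.

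\emph{Step 1: $\sup u\le\theta$.} Set $M:=\sup_{\R^n}u$ and suppose $M>\theta$. Because $\R^n$ is unbounded, the sup need not be attained, so we perturb. For $\delta>0$, take $x_\delta$ with $u(x_\delta)>M-\delta$ and consider $u(x)-\delta\,\chi(x-x_\delta)$, where $\chi$ is a smooth bounded function with $\chi(0)=0$, $\chi\ge 1$ outside the unit ball, and $|D\chi|\le 2$. This function attains a maximum at some $y_\delta$ in the unit ball around $x_\delta$, with $u(y_\delta)\ge M-\delta$. The smooth test function $\phi=\delta\chi(\cdot-x_\delta)$ touches $u$ from above at $y_\delta$. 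The viscosity subsolution inequality then gives $\overline H(p_\delta,u(y_\delta))\le c$ with $|p_\delta|\le 2\delta$. Letting $\delta\to0$ and using the continuity of $\overline H$ yields $\overline H(0,M)\le c$. Since $M>\theta$, this contradicts $\overline H(0,M)>c$. Hence $M\le\theta$.

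\emph{Step 2: $\inf u\ge\theta$.} Symmetrically, set $m:=\inf u$ and suppose $m<\theta$. Touching $u$ from below at near-minimum points and applying the supersolution inequality gives $\overline H(0,m)\ge c$. This contradicts $\overline H(0,m)<c$. Together with Step 1, we conclude $u\equiv\theta$.

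The main obstacle is the non-attainment of extrema on $\R^n$, and this is handled by the $\delta$-perturbation above. Boundedness of $u$ is what makes $M$ and $m$ finite, and it is the only place ${\rm BUC}$ is used. Note that the argument needs only continuity of $\overline H$ and the strict sign of $\overline H(0,\cdot)-c$ away from $\theta$; convexity is not essential. One should still check that $\overline H$ is jointly continuous in $(p,\theta)$. This follows as in Lemma \ref{lem:erg-1st}, via stability of viscosity solutions and uniform Lipschitz bounds from \ref{itm:A2} on compact sets of $(p,\theta)$.
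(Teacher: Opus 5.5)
Your argument is correct and has the same structure as the paper's proof. Both establish $\overline H(0,\sup u)\le c\le \overline H(0,\inf u)$, then use monotonicity of $\theta\mapsto\overline H(0,\theta)$ together with $I(c)=\{\theta\}$ to force $\sup u=\inf u=\theta$.

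The only real difference is how the two inequalities are obtained. The paper notes that every translate $u(\cdot+\zeta)$ is again a solution and invokes stability: the supremum (resp.\ infimum) of this family is the constant $\sup u$ (resp.\ $\inf u$), hence a sub- (resp.\ super-)solution. You prove the same fact by hand with a perturbed-extremum test. Take $\chi\ge 0$ there, so that $u(y_\delta)\ge M-\delta$ actually holds. Both routes rely on the joint continuity of $\overline H$ in $(p,\theta)$, which Lemma \ref{lem:erg-1st} asserts.
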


\begin{proof}
Assume that $I(c)$ is a singleton. Then $u_1 \equiv \theta$ is a solution of \eqref{eq:HJ-overline}. Suppose there exists another solution $u_2 \in \mathrm{BUC}(\mathbb{R}^n,\mathbb{R})$, then $u_2(x + \zeta)\in {\rm BUC}(\R^n,\R)$ is also another solution to \eqref{eq:HJ-overline} for every $\zeta\in \R^n$. By stability of viscosity solution \cite{crandall_users_1992,tran_hamilton-jacobi_2021} we have 
\begin{equation*}
    \theta_+ := \sup_{x \in \R^n} u_2(x) \qquad\text{and}\qquad \theta_- := \inf_{x \in \R^n} u_2(x),
\end{equation*}
are subsolution and supersolution, respectively to \eqref{eq:HJ-overline}. Therefore, $\overline{H}(x,\theta_+) \leq c \leq \overline{H}(x,\theta_-)$.
By the monotonicity of $\overline{H}$ (inherited from that of $H$), we have $\overline{H}(x,\theta_+) \geq \overline{H}(x,\theta_-)$. 
Hence, $\overline{H}(x,\theta_+) = \overline{H}(x,\theta_-) = c$. 
Since $I(c)$ is a singleton, it follows that $\theta_+ = \theta_- = \theta$. Consequently, any other solution of \eqref{eq:HJ-overline} must coincide with the constant solution $\theta$.    
\end{proof}

\begin{rmk}
Using the similar argument as in Lemma \ref{lem:sgt-2}, we actually can get that $u := \inf I(c)$ (resp. $u:= \sup I(c)$) is the minimal (resp. maximal) solution of \eqref{eq:HJ-overline} in ${\rm BUC}(\R^n,\R)$ once $\inf I(c) > -\infty$ (resp. $\sup I(c) < \infty$).
\end{rmk}

In what follows, we apply aforementioned definitions and Lemmas to an $\eps-$dependent setting. Precisely, we denote 
\[
{\bf H}_{\phi_\eps}\left(\frac{x}{\varepsilon},p\right):= H\left(\frac{x}{\varepsilon}, p, \phi_\eps(x)\right),\quad (x,p)\in\eps\T^n\times\R^n
\]
 and the associated Lagrangian 
\[
{\bf L}_{\phi_\eps}\left(\frac{x}{\varepsilon},p\right):= L\left(\frac{x}{\varepsilon}, p, \phi_\eps(x)\right),\quad (x,p)\in\eps\T^n\times\R^n
\]
for any $\phi_\eps\in C(\eps\T^n,\R)$.
We obtain the associated Mather measure (resp. ordinal Mather measure) set by $\mathfrak M(\phi_\eps)$ (resp. $\mathfrak M_-(\phi_\eps)$). In fact, such a parametrization successfully extends the case $\eps=1$ to $\eps>0$. In particular, if we choose $\phi_\eps\in C(\eps\T^n,\R)$ being a solution (or subsolution) of \eqref{eq:HJ-ep}, the following comparison principle can be drawn:

\begin{prop}[Comparison principle]\label{prop:ss}
Assume \ref{itm:A1}--\ref{itm:A3}.
\begin{enumerate}[(i)]

\item Suppose $u^1_\eps$ (resp. $u^2_\eps$) is a solution (resp. subsolution) of \eqref{eq:HJ-ep} satisfying
\begin{equation}\label{eq:measure-order}
    \int_{\eps\T^n\times\R^n} u^1_\eps(x)d\mu(x,v) \geq \int_{\eps\T^n\times\R^n} u^2_\eps(x)d\mu(x,v) \ \ \ \ \forall \mu \in \mathfrak{M}_-(u^1_\eps)
\end{equation}
then $u^1_\eps \geq u^2_\eps$.
\item
if $\mathfrak{M}_-(u^1_\eps) = \varnothing$, then $u^1_\eps$ is the unique solution (on $\eps \T^n$) of \eqref{eq:HJ-ep}.
\item
If  $u_\eps$ is a solution of \eqref{eq:HJ-ep} and $w_\eps$ is a strict subsolution of \eqref{eq:HJ-ep}, i.e. $H(\frac{x}{\varepsilon}, d_x w_\eps, w_\eps) < c$ in the viscous sense, then  $w_\eps \leq u_\eps$.
\item
If $\sup I(c) < \infty$, then we can find a continuous supersolution $\nu_\eps$ of \eqref{eq:HJ-ep} which is not less than than any solution $u_\eps$ of \eqref{eq:HJ-ep}.
\end{enumerate}
\end{prop}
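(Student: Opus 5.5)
The plan is to reduce everything to Lemma \ref{lem:comparision}, applied on $\eps\T^n$ after rescaling, with $u=u^2_\eps$, $\varphi=u^2_\eps$, $w=u^1_\eps$ and $\psi=u^1_\eps$.

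\textbf{Part (i).} Apply Lemma \ref{lem:comparision} to the pair $u=u^2_\eps$, $w=u^1_\eps$. In alternative (i), the maximum of $u^2_\eps-u^1_\eps$ is attained at a point $x_{max}$ with $u^2_\eps(x_{max})\le u^1_\eps(x_{max})$. Hence $\max(u^2_\eps-u^1_\eps)\le 0$, which is what we want.

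In alternative (ii), Lemma \ref{lem:comparision} gives a measure $\mu\in\mathfrak M_-(u^1_\eps)$ with two properties on its support:
\[
u^2_\eps-u^1_\eps=\max(u^2_\eps-u^1_\eps), \qquad u^2_\eps-u^1_\eps>0 .
\]
Integrating the first identity against $\mu$ gives
\[
\int (u^2_\eps-u^1_\eps)\,d\mu=\max(u^2_\eps-u^1_\eps)>0 .
\]
This contradicts \eqref{eq:measure-order}. So only alternative (i) can occur, and $u^1_\eps\ge u^2_\eps$.

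\textbf{Part (ii).} If $\mathfrak M_-(u^1_\eps)=\varnothing$, condition \eqref{eq:measure-order} holds vacuously. Part (i) then gives $u^1_\eps\ge u^2_\eps$ for every solution $u^2_\eps$, since a solution is in particular a subsolution.

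For the reverse inequality, apply Lemma \ref{lem:comparision} with the roles swapped: $u=u^1_\eps$ and $w=u^2_\eps$. Alternative (i) gives $u^1_\eps\le u^2_\eps$ directly. Alternative (ii) requires $u^1_\eps>u^2_\eps$ along a curve, and this is compatible with the first step. So I would argue differently here. Combining $u^1_\eps\ge u^2_\eps$ with the monotonicity \ref{itm:A3} should give $H(\frac x\eps,du^2_\eps,u^1_\eps)\ge c$, so that $u^2_\eps$ is a solution for a larger $\phi$. I would then compare the Mather measures and use $\int\partial_uL\,d\mu$ to force $\mu\in\mathfrak M_-(u^1_\eps)$.

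Concretely, take $\mu$ in the set from alternative (ii) for the swapped pair; it lies in $\mathfrak M_-(u^2_\eps)$ and satisfies
\[
\int L(x,v,u^1_\eps)\,d\mu=\int L(x,v,u^2_\eps)\,d\mu .
\]
Since $L$ is nonincreasing in $u$ and $u^1_\eps>u^2_\eps$ on the support, $L$ must be constant in $u$ along the segment $[u^2_\eps,u^1_\eps]$, $\mu$-a.e. Hence $\mu$ is also minimizing for $\phi=u^1_\eps$, with $\int\partial_uL(x,v,u^1_\eps)\,d\mu=0$. That places $\mu$ in $\mathfrak M_-(u^1_\eps)$, a contradiction. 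This identification of $\mu$ as an ordinal Mather measure for $u^1_\eps$ is the main obstacle: it needs the constant $c_{u^1_\eps}=c$ and the one-sided derivative of $L$ in $u$ at $u^1_\eps$.

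\textbf{Part (iii).} Apply Lemma \ref{lem:comparision} with $u=w_\eps$, $\varphi=w_\eps$ and $\psi=u_\eps$. Alternative (ii) would yield a curve calibrated for $w_\eps$ with respect to $\mathbf L_{w_\eps}+c$. But strictness gives $H(\cdot,dw_\eps,w_\eps)\le c-\delta$ locally, so $w_\eps\prec \mathbf L_{w_\eps}+c-\delta$. Along a calibrated curve on $(-\infty,0]$ this forces the action gap $\delta t$ to be absorbed by the bounded quantity $w_\eps(\gamma(0))-w_\eps(\gamma(t))$, which is impossible as $t\to-\infty$. So alternative (i) holds, and as in part (i) we get $w_\eps\le u_\eps$.

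\textbf{Part (iv).} Take $\theta>\sup I(c)$, so that $\overline H(0,\theta)>c$. Set $\nu_\eps=\theta+\eps v(x/\eps)+C\eps$, where $v$ solves \eqref{eq:1stHbar} with $p=0$. By \ref{itm:A3}, $\nu_\eps$ is a supersolution on the set where $\nu_\eps\ge\theta$, and $C$ is chosen so that $\nu_\eps\ge\theta$ everywhere.

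It remains to show $\nu_\eps\ge u_\eps$. Pick $\theta'\in(\sup I(c),\theta)$. I would show that $\max u_\eps\le\theta+O(\eps)$ by contradiction. Consider the point where $u_\eps-\eps v_{\theta'}(\cdot/\eps)$ is maximal. If $u_\eps>\theta'$ there, then $u_\eps$ is a subsolution of the $\theta'$-problem with constant $c<\overline H(0,\theta')$ near that point. By the characterization \eqref{eq:minmaxHbar} of $\overline H$ as a minimal subsolution value, together with a strict-subsolution argument as in part (iii), this gives a contradiction. Enlarging $C$ then gives $\nu_\eps\ge u_\eps$.
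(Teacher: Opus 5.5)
Your part (i) is the paper's argument. For (ii)--(iv) your routes differ from the paper's and are sound, and in two places they repair steps of the paper.

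In (ii), the paper only shows, twice, that $u^2_\eps(x_0)>u^1_\eps(x_0)$ is impossible. That proves $u^1_\eps$ is the \emph{maximal} solution, but never excludes $u^2_\eps<u^1_\eps$. Your swapped application of Lemma~\ref{lem:comparision} ($\varphi=u^1_\eps$, $\psi=u^2_\eps$) supplies exactly this missing half. The measure from alternative (ii) is holonomic with $\int L(x,v,u^1_\eps)\,d\mu=-c=-c_{u^1_\eps}$, so it is a Mather measure for $\mathbf L_{u^1_\eps}$. Constancy of $L(x,v,\cdot)$ on the nondegenerate interval $[u^2_\eps(x),u^1_\eps(x)]$ then kills $\partial_uL(x,v,u^1_\eps(x))$ on its support. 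Your one-sided-derivative worry is harmless: by Danskin's formula and $\partial_\theta H\ge 0$, a vanishing left $\theta$-derivative of $L$ forces $\partial_\theta H=0$ on the whole set of maximizing $p$, so the two-sided derivative vanishes as well.

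In (iii), the paper uses differentiability of critical subsolutions on the projected Mather set to produce a point with $H(\frac{x}{\eps},dw_\eps,w_\eps)=c$. You instead play the calibrated curve of alternative (ii) against strict domination. This is more elementary, but it requires strictness in the locally uniform sense ($H\le c-\delta$ a.e.\ on a neighborhood). That holds for the subsolutions $\vartheta^-_{\delta,\eps}$ actually used in Lemma~\ref{lem:infu}. Run the argument on a short piece of $\gamma$: there calibration (an equality) and strict domination (a deficit $\delta|t|$) contradict each other outright, and no limit $t\to-\infty$ is involved.

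In (iv), the paper works at $\theta_+=\sup I(c)$ and ends with $\int L(\cdot,\theta')\,d\mu=-c$ for a holonomic $\mu$. That only yields $\overline H(0,\theta')\ge c$, so it does not contradict $\overline H(0,\theta')>c$. Your comparison with the corrector at some $\theta'>\sup I(c)$, where the gap between constants is strict, avoids this problem.

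The one step of (iv) you should state precisely is the local contradiction at the maximum point $\hat x$ of $z=u_\eps-\eps v_{\theta'}(\cdot/\eps)$. The min--max characterization \eqref{eq:minmaxHbar} is global and does not apply here, since $u_\eps$ is a subsolution of the $\theta'$-problem only where $u_\eps>\theta'$. Instead:
\begin{itemize}
\item take a backward calibrated curve $\gamma$ of $\eps v_{\theta'}(\cdot/\eps)$ (for $\mathbf L_{\theta'}+\overline H(0,\theta')$) ending at $\hat x$;
\item restrict it to a short interval $[t,0]$ on which $u_\eps>\theta'$;
\item there $u_\eps\prec\mathbf L_{\theta'}+c$, hence $z(\hat x)-z(\gamma(t))\le-(\overline H(0,\theta')-c)|t|<0$, contradicting maximality.
\end{itemize}
A local doubling-of-variables argument works equally well. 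This gives $u_\eps\le\theta'+\eps\,\mathrm{osc}\,v_{\theta'}$. Since these correctors are equi-Lipschitz for $\theta'\in(\theta_+,\theta_++1]$, you can even let $\theta'\downarrow\theta_+$ and keep the paper's $\nu_\eps$ built on $\theta_+$, only with a larger additive constant.
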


\begin{proof} \quad 

{\it (i).} We prove by contradiction. Assume that $u^1_\eps$ and $u^2_\eps$ are, respectively, a solution and a subsolution of \eqref{eq:HJ-ep}, satisfying 
\[
\int_{\eps\T^n \times \R^n} u^1_\eps(x)d\mu(x,v) \geq \int_{\eps\T^n \times \R^n} u^2_\eps(x)d\mu(x,v) \ \ \ \ \forall \mu \in \mathfrak{M}_-(u^1_\eps)
\]
In this case, $\mathfrak{M}_-(u^1_\eps)$ is a set of probability measures on $\eps \T^n \times \R^n$.
If there is some $x_0 \in \eps\T^n$ such that $u^2_\eps(x_0) - u^1_\eps(x_0) > 0$, then by Lemma \ref{lem:comparision}, there exists a measure $\mu\in\mathfrak{M}_-(u^1_\eps)$ such that
\begin{equation*}
    u^2_\eps - u^1_\eps \equiv \max(u^2_\eps - u^1_\eps) > 0 \qquad\text{on}\;\mathrm{supp}(\mu). 
\end{equation*}
Therefore 
\begin{equation*}
    \int_{\varepsilon\T^n\times \R^n} u^2_\varepsilon(x)d\mu(x,v) > 
    \int_{\varepsilon\T^n\times \R^n} u^1_\varepsilon(x)d\mu(x,v), 
\end{equation*}
which contradicts the assumption \eqref{eq:measure-order}.
\medskip 

{\it (ii).}
If $\mathfrak{M}_-(u^1_\eps) = \varnothing$, then \eqref{eq:measure-order} holds for any subsolution; consequently, $u^1_\eps \geq u^2_\eps$ for any other solution $u^2_\eps$ of \eqref{eq:HJ-ep}. Let $u^2_\varepsilon$ be such another solution of \eqref{eq:HJ-ep} and assume that $u^2_\eps(x_0) - u^1_\eps(x_0) > 0$ for some $x_0\in\eps\T^n$. Then, similar to (i), by Lemma \ref{lem:comparision}, there exists a measure $\mu\in\mathfrak{M}_-(u^1_\eps)$ such that
\begin{equation*}
    u^2_\eps - u^1_\eps \equiv \max(u^2_\eps - u^1_\eps) > 0 \qquad\text{on}\;\mathrm{supp}(\mu), 
\end{equation*}
which contradicts with $\mathfrak M_-(u^1_\eps)=\emptyset$.
\medskip

{\it (iii).}
Let us define the Hamiltonians 
\begin{align*}
{\bf H}_{u_\eps}\left(\frac{x}{\varepsilon},p\right):= H\left(\frac{x}{\varepsilon}, p, u_\eps(x)\right),\quad
{\bf H}_{w_\eps}\left(\frac{x}{\varepsilon},p\right):= H\left(\frac{x}{\varepsilon}, p, w_\eps(x)\right),
\qquad (x,p)\in\eps\T^n\times\R^n. 
\end{align*}
Assume the contrary that, there exists $x_0$ such that 
\begin{equation*}
    w_\eps(x_0) - u_\eps(x_0) = \max_{\varepsilon \T^n} (w_\varepsilon - u_\varepsilon) > 0. 
\end{equation*}
By Lemma \ref{lem:comparision}, there exists a measure $\mu\in\mathfrak M_-(u_\eps)$ such that 
\begin{align*}
    w_\varepsilon(x) - u_\varepsilon(x) = \max_{\varepsilon \T^n} (w_\varepsilon - u_\varepsilon) \qquad (x,v)\in \mathrm{supp}(\mu). 
\end{align*}
Due to \cite[Theorem 7.8-(ii)]{fathi_pde_2005}, the critical subsolution $w_\varepsilon$ is differentiable on $\pi_1(\mathrm{supp}(\mu))$, where $\pi_1$ is the projection onto the first coordinate. Hence $Dw_\varepsilon = Du_\varepsilon$ on $\pi_1(\mathrm{supp}(\mu))$, and thus there are points in $\varepsilon\T^n$ such that
\begin{equation*}
    {\bf H}_{w_\eps}\left(\frac{x}{\varepsilon},d_x w_\eps(x)\right)=c. 
\end{equation*}
That contradicts the fact that $w_\eps$ is a strict subsolution of \eqref{eq:HJ-ep} and leads to $w_\eps \leq u_\eps$.

\medskip 

{\it (iv).}
Since $\sup I(c) < \infty$, we can define $\theta_+ = \sup I(c)$. Then $\overline H(0,\theta_+) = c$ and $\theta \leq \theta_+$ for all $\theta \in I(c)$. We apply Lemma \ref{lem:comparision} for the Hamiltonians
\begin{equation*}
    {\bf H}_{u_\eps}\left(\frac{x}{\varepsilon},p\right):= H\left(\frac{x}{\varepsilon}, p, u_\eps(x)\right) \quad \text{and}\quad  
    {\bf H}_{\theta_+}\left(\frac{x}{\varepsilon},p\right):= H\left(\frac{x}{\varepsilon}, p, \theta_+\right), \qquad 
    (x,p)\in\eps\T^n\times\R^n    .
\end{equation*}
Let $w_{\theta_+}$ be a solution to $H(x,d_xw_{\theta_+}, \theta_+)  =c$ in $\T^n$, and  
\begin{equation}\label{eq:nu-eps}
    \nu_\eps(x) = \theta_+ + \eps w_{\theta_+} \left(\frac{x}{\eps}\right) + 2\eps\Vert w_{\theta_+}\Vert_{L^\infty(\T^n)}, \qquad x\in \varepsilon\T^n.
\end{equation}
Then $\nu_\varepsilon \geq \theta_+$ in $\varepsilon \T^n$, therefore
\begin{equation*}
    {\bf H}_{\nu_\varepsilon}\left(\frac{x}{\varepsilon}, d_x\nu_\varepsilon\right)  =   H\left(\frac{x}{\varepsilon}, d_x \nu_\varepsilon, \nu_\varepsilon \right) \geq c
    \qquad\text{and}\qquad 
     {\bf H}_{\theta_+}\left(\frac{x}{\varepsilon},d_x\nu_\varepsilon\right) = c 
     \qquad\text{in}\;\varepsilon \T^n. 
\end{equation*}
If $u_\varepsilon$ is a solution to \eqref{eq:HJ-ep}, we show that $\nu_\varepsilon \geq u_\varepsilon$. Indeed, we have
\begin{equation*}
    {\bf H}_{u_\varepsilon}\left(\frac{x}{\varepsilon}, d_xu_\varepsilon\right) = c \qquad\text{and}\qquad 
    {\bf H}_{\theta_+}\left(\frac{x}{\varepsilon},d_x\nu_\varepsilon\right) = c . 
\end{equation*}
If $(u_\varepsilon-\nu_\varepsilon)(x_0) >0$ for some $x_0\in \varepsilon\T^n$, then $u_\varepsilon(x_0) > \nu_\varepsilon(x_0) > \theta_+$. By Lemma \ref{lem:comparision}, then there exists a measure $\mu\in \mathfrak{M}_-(\nu_\varepsilon)$ such that 
\begin{equation*}
    u_\varepsilon(x) - \nu_\varepsilon(x) =\max_{\varepsilon\T^n}(u_\varepsilon-\nu_\varepsilon) , \qquad u_\varepsilon(x) - \theta_+ >0 \qquad\text{for}\;x\in \pi_1(\mathrm{supp}(\mu))
\end{equation*}
and
\begin{equation*}
    \int_{\varepsilon\T^n\times \R^n} L\left(\frac{x}{\varepsilon},v, u_\varepsilon(x)\right)\;d\mu(x,v) = \int_{\varepsilon\T^n\times \R^n} L\left(\frac{x}{\varepsilon},v, \theta_+\right)\;d\mu(x,v) = -c.
\end{equation*}
In this case, as $\mathrm{supp}(\mu)$ is compact, we can find a value $\theta'\in \R$ such that $u_\varepsilon(x) \geq \theta' > \theta_+$ for $x\in \pi_1(\mathrm{supp}(\mu))$. As a consequence, we have 
\begin{align*}
\int_{\varepsilon\T^n\times \R^n} L\left(\frac{x}{\varepsilon}, v, u_\varepsilon(x)\right)d\mu(x,v)
\leq 
    \int_{\varepsilon\T^n\times \R^n} L\left(\frac{x}{\varepsilon}, v, \theta'\right)d\mu(x,v) \leq \int_{\varepsilon\T^n\times \R^n} L\left(\frac{x}{\varepsilon}, v, \theta_+\right)d\mu(x,v) 
\end{align*}
and thus 
\begin{equation*}
    \int_{\varepsilon\T^n\times \R^n} L\left(\frac{x}{\varepsilon}, v, \theta'\right)d\mu(x,v) = -c.
\end{equation*}
This is a contradiction to the fact that $\overline{H}(0,\theta') > c$ as $\theta'=\theta_+ = \sup I(c)$. We therefore obtain the conclusion that $u_\varepsilon\leq \nu_\varepsilon$ on $\T^n$ for any $u_\eps$ the solution of \eqref{eq:HJ-ep}.
\end{proof}

\begin{lem}\label{lem:infu}
Assume \ref{itm:A1}--\ref{itm:A3}. Let $c\in \mathrm{int}\;\mathcal{C}$.
Define
\begin{align*}
    u_\varepsilon^- &:= \inf \left\{ w \in C(\varepsilon \mathbb{T}^n,\mathbb{R}) \;\middle|\; w \text{ is a solution of } \eqref{eq:HJ-ep} \right\}
    \quad \text{if } \inf I(c) > -\infty, \\
    u_\varepsilon^+ &:= \sup \left\{ w \in C(\varepsilon \mathbb{T}^n,\mathbb{R}) \;\middle|\; w \text{ is a solution of } \eqref{eq:HJ-ep} \right\}
    \quad \text{if } \sup I(c) < \infty.
\end{align*}
Then $u_\varepsilon^\pm$ are well defined, and continuous (thanks to a priori estimate given by coercivity) in the corresponding cases
$\inf I(c) > -\infty$ and $\sup I(c) < \infty$, respectively, and are viscosity solutions of \eqref{eq:HJ-ep}. Consequently, if $I(c)$ is a singleton, all solutions $u_\varepsilon$ of \eqref{eq:HJ-ep} are uniformly bounded for $0<\varepsilon<1$.
\end{lem}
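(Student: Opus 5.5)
We first show that the family of solutions of \eqref{eq:HJ-ep} is bounded in the relevant direction, and then use the standard stability of viscosity solutions under infima/suprema together with the Perron-type characterization. We treat $u_\eps^+$ under $\sup I(c)<\infty$; the case $u_\eps^-$ is symmetric, using the analogous subsolution construction with $\theta_-=\inf I(c)$.

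\textbf{Step 1: upper bound.} By Proposition \ref{prop:ss}-(iv), the continuous supersolution $\nu_\eps$ from \eqref{eq:nu-eps} dominates every solution of \eqref{eq:HJ-ep}. Hence $u_\eps^+\le \nu_\eps\le \theta_++3\eps\|w_{\theta_+}\|_{L^\infty}$, so the supremum is finite. For $u_\eps^-$, put $\mu_\eps:=\theta_-+\eps w_{\theta_-}(x/\eps)-2\eps\|w_{\theta_-}\|_\infty\le\theta_-$. It is a subsolution by \ref{itm:A3}, and it lies below every solution by the mirror-image argument of Proposition \ref{prop:ss}-(iv). There, Lemma \ref{lem:comparision} gives a measure on which $u_\eps<\theta'<\theta_-$, which would force $\overline H(0,\theta')=c$ with $\theta'<\inf I(c)$, a contradiction.

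\textbf{Step 2: equi-Lipschitz and two-sided bounds.} The set of solutions is nonempty by Proposition \ref{prop:erg-const}. Restrict to solutions $u$ lying in a fixed range, say $u\le\nu_\eps$ for the supremum and $u\ge u_0$ for some fixed solution $u_0$. On that range, the superlinearity \ref{itm:A2} together with the continuity of $H$ in $\theta$ bounds $\{p:H(x/\eps,p,\theta)\le c\}$ uniformly for $\theta$ in a compact interval. All such solutions are therefore Lipschitz with a common constant, of order $\eps^{-1}\cdot\eps=O(1)$ after rescaling. So $u_\eps^+$ is a supremum of an equicontinuous, bounded family, and is continuous. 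Replacing the family by solutions $\ge u_0$ does not change the supremum, since the pointwise maximum of two solutions is a subsolution lying below $\nu_\eps$.

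\textbf{Step 3: $u_\eps^+$ is a solution.} As a supremum of (sub)solutions, $u_\eps^+$ is a subsolution by stability; convexity makes a.e. subsolutions equivalent to viscosity subsolutions for Lipschitz functions. For the supersolution property, we run Perron's method between the subsolution $u_\eps^+$ and the supersolution $\nu_\eps\ge u_\eps^+$. The method produces a solution $\tilde u$ with $u_\eps^+\le\tilde u\le\nu_\eps$, and by maximality $\tilde u\le u_\eps^+$. Hence $u_\eps^+=\tilde u$ is a solution. This Perron step uses only that the equation is proper in the sense of \ref{itm:A3}; strict monotonicity is not needed for existence between ordered sub- and supersolutions, only for uniqueness. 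This is the step I expect to require the most care, namely checking that the bump construction works with non-strict monotonicity. It does, because a bump raises $\theta$ while \ref{itm:A3} only makes $H$ larger, preserving the subsolution inequality locally in the standard way.

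\textbf{Step 4: uniform boundedness when $I(c)=\{\theta\}$.} Every solution satisfies $\mu_\eps\le u_\eps\le\nu_\eps$. With $\theta_\pm=\theta$ this gives $\|u_\eps-\theta\|_\infty\le 3\eps\max\|w_\theta\|_\infty$, which is uniformly bounded for $\eps\in(0,1)$. Here $w_\theta$ is a fixed solution on $\T^n$ that does not depend on $\eps$.
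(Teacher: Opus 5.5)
Your proof is correct, modulo one caveat below, but it takes a different route from the paper.

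\textbf{How the two routes differ.} The paper handles the two halves asymmetrically:
\begin{itemize}
\item For $u^-_\varepsilon$, it gets the lower barrier from strict subsolutions built at $\theta_--\delta$ together with Proposition~\ref{prop:ss}-(iii). It then shows $u^-_\varepsilon$ is a subsolution through the weak KAM domination $u_\varepsilon\prec\mathbf{L}_{u_\varepsilon}+c\le\mathbf{L}_{u^-_\varepsilon}+c$.
\item For $u^+_\varepsilon$, it uses $c\in\mathrm{int}\,\mathcal{C}$ to identify $u^+_\varepsilon$ with $\inf_{c'\downarrow c}u^-_{\varepsilon,c'}$, the limit of minimal solutions at higher levels. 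This limit lies above every level-$c$ subsolution by (iii) and is itself a solution.
\end{itemize}
You instead take both barriers directly from Lemma~\ref{lem:comparision} at the constants $\theta_\pm$, and finish with a symmetric Perron-plus-extremality argument. Your route is shorter, purely PDE-based, and avoids both the domination argument and the $c'$-approximation. The paper's route in return gets its upper bound on $u^+_\varepsilon$ from (iii) alone, and yields the ordering $u^+_\varepsilon=\lim_{c'\downarrow c}u^-_{\varepsilon,c'}$ across levels.

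Your mirror argument for $\mu_\varepsilon\le u_\varepsilon$ is sound. The measure is Mather for $\mathbf{H}_{u_\varepsilon}$ and lives where $u_\varepsilon\le\theta'<\theta_-$. It therefore gives $\int L(\cdot,\cdot,\theta')\,d\mu=-c$, hence $\overline H(0,\theta')\ge c$, which is impossible for $\theta'<\inf I(c)$.

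\textbf{The caveat.} Your $u^+_\varepsilon$ half depends entirely on Proposition~\ref{prop:ss}-(iv), and the last step of the paper's proof of (iv) is not a contradiction. With the roles reversed, the same computation only gives $\overline H(0,\theta')\ge c$, which is compatible with $\theta'>\sup I(c)$. Your mirror argument works precisely because there the inequality points the right way.

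You can make the $u^+_\varepsilon$ half self-contained:
\begin{itemize}
\item Every solution satisfies $\min u_\varepsilon\le\theta_+$. Otherwise, by \ref{itm:A3}, $u_\varepsilon$ would be a level-$c$ subsolution for the constant $m=\min u_\varepsilon$. That forces $\overline H(0,m)\le c$, contradicting $m>\sup I(c)$.
\item On the open set $\{u_\varepsilon>\theta_+\}$ one has $H(x/\varepsilon,du_\varepsilon,\theta_+)\le c$. Hence $|du_\varepsilon|\le K$ there, with $K$ independent of $\varepsilon$ and of the solution.
\item Consequently $u_\varepsilon\le\theta_++C\varepsilon$ for every solution. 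Then $\theta_++\varepsilon w_{\theta_+}(x/\varepsilon)+C'\varepsilon$ with $C'$ large is a supersolution lying above all solutions, which is all your Perron step uses.
\end{itemize}

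\textbf{Two minor fixes.}
\begin{itemize}
\item In Step 2, the family to take is $\{\max(u,u_0)\}$, not ``solutions $\ge u_0$''. These are subsolutions bounded below by $u_0$, hence equi-Lipschitz, and trivially have the same supremum.
\item In Step 3, the bump works because of continuity of $H$ at the failure point and the supersolution property of the upper barrier, not because of \ref{itm:A3}. Raising $\theta$ actually works against the subsolution inequality. Your conclusion that Perron's method needs no strict monotonicity is nonetheless right.
\end{itemize}
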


\begin{proof} We consider the case of $u_\varepsilon^-$ first. \medskip

\noindent 
\underline{{\it Step 1. Well-posedness of $u^-_\varepsilon$. }}
Since $\inf I(c) > -\infty$, let us define $\theta_- = \inf I(c)$.
For any $\delta \in (0,1)$, we have $\theta_\delta = \theta_--\delta \notin I(c)$, and thus $\overline{H}(0,\theta_\delta) = c_\delta<c$. 
For $\delta>0$, let $w_\delta$ be the solution of 
\begin{equation*}
    H(x,D w_\delta(x), \theta_\delta) = c_\delta 
    \quad \text{in}\;\T^n \qquad\text{such that}\qquad w_\delta(0) =0. 
\end{equation*}
Since $c_\delta < c$ and $\theta_\delta \geq \theta_--1$, the coercivity \ref{itm:A2} implies that the family $\{w_\delta\}_{\delta\in (0,1)}$ is equi-Lipschitz, namely, $\Vert Dw_\delta\Vert_{L^\infty(\R^d)} \leq M$ for $M = M(c)$ (as $\theta_-$ also depends on $c$). Since $w_\delta(0) = 0$, we obtain
\begin{equation*}
    \Vert Dw_\delta\Vert_{L^\infty(\R^d)} + \Vert w_\delta\Vert_{L^\infty(\R^d)} \leq M
\end{equation*}
for some $M=M(c)$ independent of $\delta \in (0,1)$. We have
\begin{align*}
    & H\left(\frac{x}{\varepsilon}, Dw_\delta\left(\frac{x}{\varepsilon}\right), \theta_\delta\right) = c_\delta < c \quad \text{in}\; \varepsilon \T^n \\
    & \vartheta_{\delta, \varepsilon}^-(x) = \theta_\delta + \varepsilon w_\delta\left(\frac{x}{\varepsilon}\right)- \varepsilon \Vert w_\delta\Vert_{L^\infty(\T^n)} \qquad\text{is a strict subsolution of}\; \eqref{eq:HJ-ep}.
\end{align*}

\noindent 
By Proposition \ref{prop:ss}-(iii), any solution $u_\varepsilon$ to \eqref{eq:HJ-ep} satisfies
\begin{equation*}
    u_\varepsilon(x) \geq  \vartheta_{\delta, \varepsilon}^-(x) =  \theta_\delta + \varepsilon w_{\delta}\left(\frac{x}{\varepsilon}\right)
    - \varepsilon \| w_{\delta} \|_{L^\infty(\mathbb{T}^n)} \geq \theta_\delta - 2M.
\end{equation*}
Let $\delta\to 0_+$ we deduce that 
\begin{equation}\label{eq:below}
    u_\varepsilon(x) \geq \theta_- -2M. 
\end{equation}
In particular, $u_\varepsilon$ is uniformly bounded from below, and
\begin{align*}
    H\left(\frac{x}{\varepsilon}, Du_\varepsilon(x), \theta_- -2M\right) \leq H\left(\frac{x}{\varepsilon}, Du_\varepsilon(x),u_\varepsilon(x)\right) = c \qquad\text{in}\;\T^n.
\end{align*}
Moreover, the coercivity of $H$ implies that $\{u_\varepsilon\}_{0<\varepsilon<1}$ is equi-Lipschitz. This implies that $u_\eps^-$ is a well-defined Lipschitz function, and by property of viscosity solution $u^-_\varepsilon$ is a supersolution of \eqref{eq:HJ-ep} (see \cite[Lemma 4.2]{crandall_users_1992}). \medskip 

\noindent 
\underline{{\it Step 2. $u^-_\varepsilon$ is a solution of \eqref{eq:HJ-ep}. }}
On another had, by Lemma \ref{lem:weak-kam}-(iii), any solution $u_\varepsilon$ to \eqref{eq:HJ-ep} satisfies $u_\varepsilon \prec {\bf L}_{u_\varepsilon} + c$, i.e., for all $x,y\in \varepsilon \T^n$, $t>0$ and $\gamma\in \mathrm{AC}([0,t];\T^n)$ with $\gamma(0) = x, \gamma(t)=y$ then
\begin{equation*}
    u_\eps(y) - u_\eps(x) \leq 
    \int_0^t
    \Big( 
    {\bf L}_{u_\eps}\left(\frac{\gamma(s)}{\varepsilon},\dot{\gamma}(s)\right) + c\Big)  ds
    \leq 
    \int_0^t
    \Big( 
    {\bf L}_{u^-_\eps}\left(\frac{\gamma(s)}{\varepsilon},\dot{\gamma}(s)\right) + c\Big)  ds
\end{equation*}
where we use $u^-_\eps \leq u_\eps$ in the last inequality. This implies that 
\begin{align*}
    u^-_\eps(y) - u^-_\eps(x) \leq 
    \int_0^t
    \Big( 
    {\bf L}_{u^-_\eps}\left(\frac{\gamma(s)}{\varepsilon},\dot{\gamma}(s)\right) + c\Big)  ds
\end{align*}
for all $x,y\in \T^n$ and $\gamma\in \mathrm{AC}([0,t];\T^n)$ with $\gamma(0) = x, \gamma(t)=y$ as well. In other words, $u_\varepsilon^-$ is also a subsolution to \eqref{eq:HJ-ep} and thus it is a solution to \eqref{eq:HJ-ep}. \medskip 

\noindent 
\underline{{\it Step 3. Rate of convergence for $u^-_\varepsilon - \theta^-$. }} Let $w_{\theta_-}$ be a solution to 
\begin{equation*}
    H(x,Dw_{\theta_-}, \theta_-) = c \qquad\text{in}\;\T^n \qquad\text{with}\qquad w_{\theta_-}(0) = 0,
\end{equation*}
where $\theta_- = \inf I(c)$. By coercivity we can obtain 
\begin{equation*}
     \Vert Dw_{\theta_-}\Vert_{L^\infty(\R^d)} + \Vert w_{\theta_-}\Vert_{L^\infty(\R^d)} \leq M
\end{equation*}
for some $M=M(c)$ independent of $\delta \in (0,1)$.
Then
\begin{equation*}
    \theta_- + \varepsilon w_{\theta_-}\left(\frac{x}{\varepsilon}\right) \pm \varepsilon \|w_{\theta_-}\|_{L^\infty(\mathbb{T}^n)}
\end{equation*}
are Lipschitz sub- and supersolutions to \eqref{eq:HJ-ep}, respectively.
The Perron method guarantees the existence of a solution $u_\eps$ satisfying that 
\begin{equation}\label{eq:above}
    \theta_- + \varepsilon w_{\theta_-}\left(\frac{x}{\varepsilon}\right) - \varepsilon\Vert w_{\theta_-} \Vert_{L^\infty(\T^n)} 
        \leq u_\eps \leq 
    \theta_- + \varepsilon w_{\theta_-}\left(\frac{x}{\varepsilon}\right) + \varepsilon\Vert w_{\theta_-} \Vert_{L^\infty(\T^n)}. 
\end{equation}
From \eqref{eq:below}, \eqref{eq:above}, and the fact that $u^-_\varepsilon\leq u_\varepsilon$ we obtain 
\begin{equation}\label{eq:infu_eps}
    \| u^-_\eps - \theta_- \|_{L^\infty(\R^n)} \leq C\eps, \qquad C = 2\max \left\lbrace 
    \Vert w_{\theta_-} \Vert_{L^\infty(\T^n)}, M \right\rbrace . 
\end{equation}
\smallskip

Now we consider the case of $u_\varepsilon^+$. As $c\in \mathrm{int}\;\mathcal{C}$, we can find $\kappa>0$ such that $(c-\kappa, c+\kappa)\subset\mathcal{C}$. Let $c_0\in (c,c+\kappa)\subset \mathcal{C}$. By \eqref{eq:nu-eps} in Proposition \ref{prop:ss}-(iv), we can find a supersolution $\nu_{\varepsilon, c_0}$ with $\theta \in I(c_0)$ such that $\nu_{\varepsilon,c_0} \geq u$ for any subsolution $u$ to \eqref{eq:HJ-ep} with the value $c$.

If $\theta_+=\sup I(c) < \infty$ then for each $c'\in(c,c+\kappa)$, we must have $\inf I(c') > -\infty$. Indeed, if $\theta'\in I(c')$ then $\overline{H}(0,\theta') = c' > c = \overline{H}(0,\theta_+)$ and thus $\theta' \geq \theta_+$. Let $u^-_{\varepsilon, c'}$ be the minimal solution to \eqref{eq:HJ-ep} as in the previous part.
Let us define  
\begin{equation*}
    u_\eps^*:= \inf \big\lbrace u_{\eps,c'}^- ~|~ c' \in( c,c_0) \big\rbrace \leq \nu_{\varepsilon, c_0}. 
\end{equation*}
Hence $u_\eps^*$ is well defined. 
Since $\theta_+=\sup I(c) < \infty$, we denote by $w_{\theta_+}$ the solution to $ H(x,Dw_{\theta_+}, \theta_+) = c$ in $\T^n$ with $w_{\theta_+}(0) = 0$. 
Let 
\begin{align*}
    \vartheta^-_{\theta_+,\varepsilon}(x) = \theta_+ + \varepsilon w_{\theta_+}\left(\frac{x}{\varepsilon}\right) - \varepsilon \Vert w_{\theta_+}\Vert_{L^\infty(\T^n)} .
\end{align*}
For each $c'>c$, we have
\begin{align*}
    & H\left(\frac{x}{\varepsilon}, Du^-_{\varepsilon, c'}(x), u^-_{\varepsilon, c'}(x)\right) = c' \qquad\text{in}\;\T^n \\
    & H\left(\frac{x}{\varepsilon}, D\vartheta^-_{\theta_+,\varepsilon}(x), \vartheta^-_{\theta_+}(x)\right) \leq c \qquad\text{in}\;\T^n.
\end{align*}
Since $c'>c$, by Proposition \ref{prop:ss}-(iii), we have
\begin{equation*}
    \vartheta^-_{\theta_+,\varepsilon}(x) =  \theta_+ + \varepsilon w_{\theta_+}\left(\frac{x}{\varepsilon}\right) - \varepsilon \Vert w_{\theta_+}\Vert_{L^\infty(\T^n)} \leq u^-_{\varepsilon, c'}(x). 
\end{equation*}
As a consequence, we have
\begin{equation*}
    u_\eps^*(x) \geq 
    \vartheta^-_{\theta_+,\varepsilon}(x) =  \theta_+ + \varepsilon w_{\theta_+}\left(\frac{x}{\varepsilon}\right) - \varepsilon \Vert w_{\theta_+}\Vert_{L^\infty(\T^n)}. 
\end{equation*}
With the Proposition \ref{prop:ss}-(iii), if  $u$ is a subsolution to \eqref{eq:HJ-ep}, i.e.,
\begin{align*}
    H\left(\frac{x}{\varepsilon}, Du, u\right) \leq c \qquad\text{in}\;\T^n 
\end{align*}
then since $c'>c$, we have $u_{\eps,c'}^- \geq u$. 
Therefore $u_\eps^*$ is larger than any solution to \eqref{eq:HJ-ep}, and thus $u^*_\varepsilon\geq u^+_\varepsilon$. On the other hand, we also have $u^*_\varepsilon$ is a supersolution to \eqref{eq:HJ-ep}. By the same arguments as in proving $u_\eps^-\prec\mathbf{L}_{u_\eps} + c$, we can prove that $u_\eps^*\prec\mathbf{L}_{u_\eps^*} + c$ and hence a subsolution to \eqref{eq:HJ-ep}. Therefore, $u_\eps^*$ is a solution to \eqref{eq:HJ-ep} and thus $u_\eps^* \leq u_\eps^+$. We therefore conclude that 
\begin{equation}\label{eq:supu_eps}
    \| u^+_\eps - \theta_+ \|_{L^\infty(\R^n)} \leq C\eps, \qquad C = 3\Vert w_{\theta_+} \Vert_{L^\infty(\T^n)}. 
\end{equation}
As for $I(c)$ is a singleton, we have $\theta = \theta_+=\theta_-$. Therefore, for any solution $u_\eps$, we have $u_\eps^- \leq u_\eps \leq u_\eps^+$ and $\Vert u_\eps - \theta \Vert_{L^\infty(\R^n)} \leq C\eps$.
\end{proof}

We are now ready to give the proof of Theorem \ref{thm:main}.

\begin{proof}[Proof of Theorem \ref{thm:main}] Proof of (i) follows from Proposition \ref{prop:erg-const}, while (ii) follows from Lemma \ref{lem:infu} by \eqref{eq:infu_eps} and \eqref{eq:supu_eps}. Lastly, for (iii), the uniqueness of the solution is due to Lemma \ref{lem:sgt-2}. 
Since we already have $u^+_\eps \geq u_\eps \geq u^-_{\varepsilon}$, we immediately get our conclusion from item {\it (ii)}.
\end{proof}

\begin{proof}[Proof of Corollary \ref{cor:homo}]
    It suffices to show that $\mathfrak{M}_-(u_\eps)=\varnothing$. Then Proposition~\ref{prop:ss}-(ii) yields the uniqueness of $u_\eps$ for \eqref{eq:HJ-ep}, and Theorem~\ref{thm:main} provides the convergence and its rate.
\medskip 

Under \ref{itm:A3'}, we have $\partial_u L > 0$ a.e., hence by Definition~\ref{defn:ord-M}, $\mathfrak{M}_-(u_\eps)=\varnothing$ for any solution $u_\eps$.

\medskip 
Under \ref{itm:A8}, we know that $\partial_u L(x,v,\theta)$ is non-decreasing in $\theta$ for fixed $(x,v) \in \eps\T^n \times \R^n$. If  $\mu \in \mathfrak{M}_-(u_\eps)$, then 
\[
\int_{\varepsilon\T^n \times \R^n} \partial_u L\left(\frac{x}{\varepsilon},v,u_\eps\right) d\mu = 0 \qquad\text{and}\qquad 
\int_{\varepsilon\T^n \times \R^n} L\left(\frac{x}{\varepsilon},v,u_\eps\right) d\mu = -c.
\]
Combining with the fact that $\partial_u L(x,v,\theta)$ is non-increasing and non-positive in $\theta$, we know that for any $\kappa \geq 0$ then
\begin{align*}
    0 = \int_{\varepsilon \T^n \times \R^n} \partial_u L\left(\frac{x}{\varepsilon},v,u_\eps \right) d\mu  
    \leq 
    \int_{\varepsilon \T^n \times \R^n} \partial_u L\left(\frac{x}{\varepsilon},v,u_\eps - \kappa\right) d\mu \leq 0.
\end{align*}
Therefore for any $\kappa>0$ then
\begin{align*}
    \int_{\varepsilon \T^n \times \R^n} \partial_u L\left(\frac{x}{\varepsilon},v,u_\eps - \kappa\right) d\mu = 0 
    \qquad\Longrightarrow\qquad 
    \int_{\varepsilon \T^n \times \R^n} L\left(\frac{x}{\varepsilon},v,u_\eps - k\right) d\mu = -c.
\end{align*}
Since $I(c) = \{\theta_0\}$, we can find a $\theta_1 < \theta_0$ such that $\overline H(0,\theta_1)=c_1<c$. By selecting $k$ sufficiently large satisfying $u_\eps - k \leq \theta_1$, we get
\begin{equation*}
    -c_1 \leq \int_{\varepsilon\T^n \times \R^n} L\left(\frac{x}{\varepsilon},v,\theta_1\right) d\mu \leq \int_{\varepsilon\T^n \times \R^n} L\left(\frac{x}{\varepsilon},v,u_\eps - k\right) d\mu = -c
\end{equation*}
which contradicts to $c_1<c$. Therefore, $\mathfrak{M}_-(u_\eps) = \varnothing$ and $u_\eps$ is the unique solution to \eqref{eq:HJ-ep}. 
\end{proof}

\section{Examples and discussions on the micro structure of $\cC$}\label{s3}
In this section, we prove some complementary conclusions for Theorem \ref{thm:main} and provide illustrative examples.
\begin{lem}\label{lem:I-single}
If ${\rm int}\;\cC\neq\emptyset$, then for almost every  $c\in\cC$, the associated  $I(c)$ is a singleton.
\end{lem}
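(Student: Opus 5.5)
The proof rests on the structure of the map $f:\theta\mapsto \overline H(0,\theta)$. By Lemma \ref{lem:erg-1st}, $f$ is continuous and non-decreasing on $\R$. For each $c\in\cC$ the fiber $I(c)=f^{-1}(c)$ is therefore a nonempty closed interval, possibly degenerate or unbounded. The claim reduces to showing that a monotone continuous function can have only countably many values with non-degenerate level sets. The hypothesis ${\rm int}\;\cC\neq\emptyset$ only ensures that "almost every $c\in\cC$" refers to a set of positive Lebesgue measure, since $\cC$ is then a non-degenerate interval, so the statement is not vacuous.

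The key steps are as follows.

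\emph{Step 1.} Show that $I(c)$ is an interval. If $\theta_1<\theta_2$ both lie in $I(c)$ and $\theta\in(\theta_1,\theta_2)$, monotonicity gives $c=f(\theta_1)\le f(\theta)\le f(\theta_2)=c$, so $\theta\in I(c)$. Closedness follows from continuity of $f$.

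\emph{Step 2.} Let $D:=\{c\in\cC: I(c)\text{ is not a singleton}\}$. For each $c\in D$ the interval $I(c)$ has nonempty interior. The interiors $\{{\rm int}\,I(c)\}_{c\in D}$ are pairwise disjoint, because distinct $c$ give disjoint fibers. Choose a rational number $q_c\in{\rm int}\,I(c)$ for each $c\in D$. The map $c\mapsto q_c$ is injective into $\mathbb{Q}$, so $D$ is at most countable.

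\emph{Step 3.} A countable set has Lebesgue measure zero. Hence $I(c)$ is a singleton for every $c\in\cC\setminus D$, that is, for almost every $c\in\cC$. In fact this gives the stronger conclusion that $I(c)$ fails to be a singleton for at most countably many $c$.

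I expect no real obstacle. The only point needing care is that $I(c)$ is nonempty for $c\in\cC$, which holds by the definition of $\cC$ as the range of $f$. Unbounded fibers, for example $I(c)=[\theta_0,\infty)$ when $f$ is eventually constant, are handled by the same rational-point injection.
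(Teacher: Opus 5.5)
Your proof is correct and follows the same route as the paper: the paper also picks a rational $\theta_c$ in a nondegenerate interval $R_c$ on which $\overline H(0,\cdot)\equiv c$, and concludes by countability of $\mathbb{Q}$. Your version states explicitly what the paper leaves implicit, namely that distinct $c$ have disjoint fibers, so $c\mapsto q_c$ is injective and the exceptional set of $c$ is in fact countable.
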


\begin{proof}
Due to Lemma \ref{lem:erg-1st}, if ${\rm int}\;\cC\neq\emptyset$, then ${\rm int}\;\cC = (a,b)$ with $a,b \in \R\cup\{\pm \infty\}$. With out loss of generality, we assume that $(a,b )= (0,1)$ and other cases can be treated similarly. For any $c \in (0,1)$ such that $I(c)$ is not a singleton, there exists an interval $R_c$ with 
\begin{equation*}
    {\rm int} \; R_c \neq\emptyset    \qquad\text{and}\qquad \overline H(0,\theta)\equiv c\;\text{for all}\;\theta\in R_c. 
\end{equation*}
Therefore, we can always find a rational number $\theta_c \in R_c$. Since $\mathbb Q\cap (0,1)$ is countable and is of measure zero, we conclude that
the set of all such $c$ is a null set. 
\end{proof}

\begin{ex}\label{ex:sol1}
Let us consider
\begin{equation*}
	H(x, p, u) = \frac{1}{2}\vert p \vert^2 + (\cos(x) - 1)\Big(1 - \frac{1}{4\pi}\arctan(u)\Big).
\end{equation*}
We have ${\rm int}\;\cC=\emptyset$.
\end{ex}
\begin{proof}
It is not hard to check that the Hamiltonian defined above satisfies our assumption \ref{itm:A1}--\ref{itm:A3}. However, for any $\theta \in \R$, by the representation formula \eqref{eq:minmaxHbar} we have
\begin{equation*}
    \overline H(0,\theta) = \min_{\varphi\in C^1(\T^n)}\max_{x \in \T^n}
    \left[ 
        \frac{1}{2}|D\varphi(x)|^2 + 
        (\cos(x) - 1)\Big(1 - \frac{1}{4\pi}\arctan(\theta)\Big)
    \right].
\end{equation*}
Choose $\varphi\equiv 0$, we obtain $\overline{H}(0,\theta) \leq 0$. On the other hand, we have 
\begin{equation*}
    \overline H(0,\theta) \geq\max_{x \in \T^n}
    \left[ 
        (\cos(x) - 1)\Big(1 - \frac{1}{4\pi}\arctan(\theta)\Big)
    \right] = 0.
\end{equation*}
Thus $\cC = \{0\}$ and $I(0) = \R$.
\end{proof}

\begin{ex}\label{ex:sol2}
If $I(c)$ is not a singleton, equation \eqref{eq:HJ-overline} admits non-constant solutions in  ${\rm BUC}(\R^n,\R)$. Here is an example: Suppose 
\[
H(x,p,\theta) = H_1(x,p) + f(\theta),\quad (x,p,\theta)\in\R^3
\]
with $H_1(x,p) = \frac{1}{2}\vert p \vert^2 + (\cos(2\pi x) - 1)$ and $f(\cdot)$ being a smooth and monotone function with 
\begin{equation*}
    f(\theta) = 
    \begin{cases}
        \begin{aligned}
            &0, \quad &&\vert \theta \vert \leq 10\\
            & {\rm sgn}(\theta)(\theta - {\rm sgn}(\theta)\cdot 10)^2, \quad &&|\theta| > 10,
        \end{aligned}
    \end{cases}
\end{equation*}
   where ${\rm sgn}(\cdot)$ is the sign function. Then $\overline{H}(p,\theta) = \overline{H}_1(p) + f(\theta)$, where $\overline{H}_1(p)$ is the effective Hamiltonian of $H_1(x,p)$. Consequently, 
    \begin{equation*}
        I(0)=\{\theta\in\R~|~ f(\theta)=0\}=[-10,10]
    \end{equation*}
   is not a singleton.
   Define a function $u_0:\T(:=\R\slash\Z)\rightarrow\R$ by 
   \begin{equation*}
    u_0(x) = 
       \begin{cases}
           \begin{aligned}
               & \frac {4x}\pi,  && x \in \left[0, \frac{1}{2}\right],\\
                & \frac{4(1 - x)}{\pi}, &&  x \in \left[\frac{1}{2},1\right).
           \end{aligned}
       \end{cases}
   \end{equation*}
 It is not hard to check that $u_0$ is a viscosity solution of 
\[
\overline{H}(d_x u,u) = 0,\quad x\in\T.
\]
We can construct a sequence of smooth function $\{u_n\in C(\T, \R)\}_{n \in \Z_+}$ by
 $u_n(0)\equiv 0$ and 
$$
u_n'(x) = \left\{
\begin{aligned}
& \;\;\;\;\sqrt{2(\cos(4\pi nx) - 1)},\quad & x \in \left[0, \frac{1}{2}\right],\\
& -\sqrt{2(\cos(4\pi nx) - 1)}, \ & x \in \left[\frac{1}{2},1\right),
\end{aligned}
\right.
$$
and verified that $u_n$ solves 
\[
H(2nx,d_x u,u) = 0,\quad x\in\T.
\]
As we can see,  $u_n$ converges to $u_0$ uniformly as $n\rightarrow +\infty$, but the configuration space in the converging process is always $\T$, not $\T/n$.
\end{ex}

\section{Semilinear viscous Hamilton-Jacobi equations}\label{s4}

In this section we establish the existence of a couple of $(u,c)\in C^2(\T^n,\R)\times\R$ solving \eqref{eq:HJ-ep-2nd} under additional assumptions \ref{itm:A4}--\ref{itm:A7}. Since $H(x,\xi,\theta)$ is not assumed to be strictly monotone in $\theta$, a comparison principle is not available. Consequently, we adopt a vanishing discount procedure instead of Perron's method for the purpose of constructing a continuous viscosity solution, and then bootstrap its regularity. Before that, we should be aware that if $u\in C^2(\eps\T^n,\R)$ solves  \eqref{eq:HJ-ep-2nd} then
\[
c=\int_{\eps\T^n} H\left(\frac{x}{\varepsilon},du(x),u(x)\right)\,dx. 
\]

\noindent 
Firstly, we consider solutions to the following equation 
    \begin{equation}\label{eq:HJB2nd-th}
	   H(x,du,u) -\Delta u = c,\qquad\qquad  x\in \T^n
    \end{equation}
where the solutions are always assumed to be of the following sense:
\begin{defn}[Viscosity solutions (second-order)]\label{def:viscosity2nd} Let $u:\T^n\to \R$ and $H\in C(\T^n\times\R^n\times\R,\R)$.
\begin{itemize}
    \item[(i)] We say that $u$ is a viscosity subsolution to \eqref{eq:HJB2nd-th} if its upper semicontinuous envelope $u^*\in \mathrm{USC}(\T^n,\R)$ of $u$, is a viscosity subsolution, in the sense that, if $\varphi\in C^2(\T^n,\R)$ and $u^*-\varphi$ has a local max at $x_0$, then
    \begin{equation*}
	    H(x_0,d\varphi(x_0), u^*(x_0)) \leq c +  \Delta \varphi(x_0). 
    \end{equation*}
    \item[(ii)] We say that $u$ is a viscosity supersolution to \eqref{eq:HJB2nd-th} if its lower semicontinuous envelope $u_*\in \mathrm{LSC}(\T^n,\R)$ of $u$, is a viscosity supersolution, in the sense that, if $\varphi\in C^2(\T^n,\R)$ and $u_*-\varphi$ has a local min at $x_0$, then
    \begin{equation*}
	    H(x_0,d\varphi(x_0), u_*(x_0)) \geq c +  \Delta \varphi(x_0). 
    \end{equation*}
\end{itemize}
We say that $u$ is a viscosity solution if it is both a subsolution and a supersolution. 
\end{defn}

Now we state the Lipschitz estimate of solutions as follows (see \cite{Armstrong2015ViscosityEquations, barles_weak_1991, zhang_limit_2024} for related results).

\begin{prop}\label{prop:Bernstein}
 Assume that $H$ satisfies \ref{itm:A1}, \ref{itm:A4}-- \ref{itm:A7}. 
Let $u\in C(\T^n,\R)$ be a continuous viscosity solution to \eqref{eq:HJB2nd-th}. 
Then $u$ is Lipschitz with constant 
\begin{equation}
	\Vert du\Vert_{L^\infty(\T^n)} \leq C(\Lambda_0,\Lambda_\ell, M_0, M_\ell, \ell, \rho^*, c), \end{equation}
where $\ell = \Vert u\Vert_{L^\infty(\T^n)}$, $c$ is the ergodic constant and $\rho^*, \Lambda_0,\Lambda_\ell, M_0, M_\ell$ are related arguments involved in \ref{itm:A4}-- \ref{itm:A7}. Furthermore, we can show that $u\in C^2(\T^n,\R)$. 
\end{prop}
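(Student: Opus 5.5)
The strategy is to freeze the zeroth-order dependence and treat \eqref{eq:HJB2nd-th} as a viscous Hamilton--Jacobi equation in $(x,p)$ alone. Set $\ell=\|u\|_{L^\infty(\T^n)}$ and $f(x):=H(x,du,u(x))$ formally. By \ref{itm:A3}, \ref{itm:A5} and \ref{itm:A6}, for $|\theta|\le \ell$ we have
\[
H(x,p,\theta)\ \ge\ H(x,p,-\ell)\ \ge\ H(x,p,0)-\rho^*\ell\ \ge\ \Lambda_0|p|^m-M_0-\rho^*\ell ,
\]
so the equation has uniform superlinear growth of order $m>1$ along the solution. I would prove the gradient bound with the Bernstein method, run at the viscosity level through doubling of variables as in \cite{barles_weak_1991, Armstrong2015ViscosityEquations}. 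The first step is a crude bound. Since $-\Delta u\le c-\Lambda_0|du|^m+M_0+\rho^*\ell$ in the viscosity sense, $u$ is a viscosity subsolution of $-\Delta u+\Lambda_0|du|^m\le K$ with $K=|c|+M_0+\rho^*\ell$. The Capuzzo-Dolcetta--Leoni--Porretta / Barles argument with $m>1$ then gives Hölder continuity of $u$ with a constant depending only on $(\Lambda_0,K,m)$. In fact one can push to Lipschitz directly by comparing $u(x)-u(y)$ with $L|x-y|$ near the diagonal.

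The second step is the main Lipschitz estimate, done by doubling. Consider
\[
\Phi(x,y)=u(x)-u(y)-L\,\psi(|x-y|)
\]
on $\T^n\times\T^n$, with $\psi$ concave, increasing, $\psi(r)=r-\kappa r^{2}$ for small $r$, and then constant. Suppose $\max\Phi>0$ at $(\bar x,\bar y)$, $\bar x\ne\bar y$. The Crandall--Ishii lemma gives $p=L\psi'(r)\hat e$ with $|p|\sim L$, together with matrices $X\le Y$, up to the concavity term $-cL\psi''$. Subtracting the sub- and supersolution inequalities yields
\[
H(\bar x,p,u(\bar x))-H(\bar y,p,u(\bar y))\ \le\ \mathrm{tr}(X-Y)\ \le\ 4L\psi''(r)\ <0 .
\]
Since $u(\bar x)>u(\bar y)$, \ref{itm:A3} makes $H(\bar x,p,u(\bar x))\ge H(\bar x,p,u(\bar y))$. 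This is where non-strict monotonicity suffices. We are left with
\[
-4L\psi''(r)\ \le\ H(\bar y,p,u(\bar y))-H(\bar x,p,u(\bar x))\ \le\ (\Lambda_\ell|p|^m+M_\ell)\,r ,
\]
using \ref{itm:A7}.

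This comparison of $|p|^m r$ against $L\kappa$ is the main obstacle. For $m\le 2$ a suitable choice of $\kappa$ and small-$r$ localization closes it. For general $m$, plain doubling does not close, and the coercive term $\Lambda_0|p|^m$ must be used. I would first try the classical Bernstein computation on a regularized equation. Replace $H$ by a smooth approximation, or add $\delta u$ to get a uniquely solvable problem with smooth solution $u^\delta$; uniform boundedness holds by a comparison/maximum argument. Set $w=\tfrac12|du^\delta|^2$, differentiate the equation, and evaluate at the maximum of $w$. The inequality $|D^2u|^2\ge (\Delta u)^2/n=(H-c)^2/n\gtrsim \Lambda_0^2|du|^{2m}$ dominates the error terms. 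These are $|D_xH|\,|du|\lesssim(\Lambda_\ell|du|^m+M_\ell)|du|$ from \ref{itm:A7} (with \ref{itm:A4} to differentiate), and $\partial_\theta H\,|du|^2\ge0$, which has a good sign by \ref{itm:A3}. Since $2m>m+1$, this gives $|du^\delta|\le C(\Lambda_0,\Lambda_\ell,M_0,M_\ell,\ell,\rho^*,c)$ uniformly. Passing $\delta\to0$, together with the fact that the original $u$ is a limit of such approximations, transfers the bound.

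For the last assertion: once $du\in L^\infty$, the equation reads $\Delta u=H(x,du,u)-c=:g\in L^\infty$ in the viscosity, hence distributional, sense. Elliptic $W^{2,q}$ estimates give $u\in C^{1,\alpha}$. Then $g\in C^{0,\alpha}$ by \ref{itm:A4}, and Schauder theory gives $u\in C^{2,\alpha}(\T^n)$.
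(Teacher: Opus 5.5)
Your closing regularity step matches the paper. The Lipschitz estimate, however, has a genuine gap. The proposition is an a priori bound for an \emph{arbitrary} continuous viscosity solution $u$ of \eqref{eq:HJB2nd-th}, and the paper needs it in that form: all solutions must be $C^2$ before Lemma~\ref{lem:cp-2nd} and Proposition~\ref{prop:cp-vis} can be applied.

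Once your doubling stalls for $m>2$, you switch to a classical Bernstein computation on smooth solutions $u^\delta$ of a discounted or regularized problem. You then assert that ``the original $u$ is a limit of such approximations.'' Nothing supports this. Since $\partial_\theta H$ may vanish, \eqref{eq:HJB2nd-th} has no comparison principle and its solutions are not unique; the paper stresses exactly this. The vanishing-discount limit of $u^\delta$ therefore selects, along subsequences, \emph{some} solution, not the given one. At best you show that a Lipschitz solution exists.

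The claimed $\delta$-uniform bound on $\|u^\delta\|_\infty$ ``by a maximum argument'' also fails. At a maximum point you only get $\delta\max u^\delta\le c-H(x_0,0,\max u^\delta)$, which is a $\delta$-dependent bound. Your gradient bound depends on $\ell$ through $\Lambda_\ell,M_\ell$, so this loss is fatal. As a side remark, the H\"older estimate in your Step~1 for subsolutions of $-\Delta u+\Lambda_0|du|^m\le K$ requires $m>2$, not $m>1$; it is not load-bearing, since $u$ is continuous by hypothesis.

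The missing idea is how to bring the coercivity in directly at the viscosity level. The paper does this with the plain penalty $\Phi(x,y)=u(x)-u(y)-L|x-y|$. After Ishii's lemma, test the matrix inequality against $(s\xi,\xi)$ instead of $(\xi,\xi)$, and multiply the subsolution inequality at $x_0$ by $s^2$, where $s^2=1+\beta|x_0-y_0|$. This gives
\[
\beta|x_0-y_0|\,H(x_0,L\xi_0,u(x_0))+\big[H(x_0,L\xi_0,u(x_0))-H(y_0,L\xi_0,u(y_0))\big]\le \mathrm{Tr}(s^2X_\mu-Y_\mu)+\beta|x_0-y_0|\,c .
\]
The trace costs only $\frac{nL(s-1)^2}{|x_0-y_0|}\lesssim nL\beta^2|x_0-y_0|$, plus a term vanishing as $\mu\to0$.

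By \ref{itm:A3}, \ref{itm:A5} and \ref{itm:A6}, the first term is at least $\beta|x_0-y_0|(\Lambda_0L^m-M_0-\rho^*\ell)$. Crucially, it carries the same factor $|x_0-y_0|$ as the \ref{itm:A7} error $(\Lambda_\ell L^m+M_\ell)|x_0-y_0|$ in the bracket. Divide by $|x_0-y_0|$ and take $\beta=2\Lambda_\ell/\Lambda_0$. This yields $\Lambda_\ell L^m\lesssim L+C$, which is a contradiction for large $L$ whenever $m>1$. No smallness of $|x_0-y_0|$ and no concave $\psi$ is needed.

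This $s$-scaling is the viscosity-level substitute for your inequality $|D^2u|^2\ge(\Delta u)^2/n$. It is what makes the estimate hold for every continuous solution rather than only for a selected limit.
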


\begin{proof} We show that there exists $L>0$ such that for any $\hat{x}\in\T^n$,
$\limsup_{x\to\hat{x}} \frac{u(x)-u(\hat{x})}{|x-\hat{x}|} \leq L$. Assume by contradiction that for each $L>0$ there exists $\hat{x}\in\T^n$ with
\begin{equation}\label{eq:Lipschitz-contrary}
    \limsup_{x\to \hat{x}} \frac{u(x)- u(\hat{x})}{|x-\hat{x}|} > L. 
\end{equation}    
We will show that this leads to a contradiction. We define the auxiliary functional
\begin{equation*}
    \Phi(x,y) := u(x) - u(y) - L|x-y|
    , \qquad x,y \in \T^n. 
\end{equation*}
From \eqref{eq:Lipschitz-contrary}, we may assume that there exists $\gamma>0$ and a sequence $x_k\to \hat{x}$ such that 
\begin{equation*}
    \frac{u(x_k) - u(\hat{x})}{|x_k-\hat{x}|} > L + \gamma. 
\end{equation*}
We have
\begin{align}\label{eq:positive-max}
    \max_{x,y\in \T^n} \Phi(x,y) 
        \geq 
        \Phi(x_k, \hat{x})
        &= \Big( u(x_k) - u(\hat{x}) - L|x_k-\hat{x}| \Big) > 0
\end{align}
for $x_k$ sufficiently close to $\hat{x}$. 
By compactness of $\T^n$ and the continuity of $u$, there exist $x_0, y_0\in \T^n$ such that 
\begin{equation*}
    \max_{x,y\in \T^n} \Phi(x,y) = \Phi(x_0, y_0) > 0. 
\end{equation*}
Consequently, $x_0\neq y_0$. By Ishii's Lemma \cite[Lemma 3.2] {crandall_viscosity_1983} 
applying to $\varphi(x,y) = L|x-y|$, 
for every $\mu>0$ there exists matrices $X_\mu, Y_\mu \in \mathbb{S}^n$ such that 
\begin{equation}\label{eq:matrix-1}
\begin{cases}
\begin{aligned}
    & \left(\partial_x\varphi(x_0,y_0), X_\mu\right) \in \overline{J}_{\T^n}^{2,+} u(x_0), \\
    & \left(-\partial_y\varphi(x_0,y_0), Y_\mu\right) \in \overline{J}_{\T^n}^{2,-} u(y_0)
\end{aligned}
\end{cases} 
	\qquad \text{and}\qquad 
    \begin{pmatrix}
        X_\mu & 0 \\
        0 & -Y_\mu
    \end{pmatrix} \preceq A + \mu A^2,
\end{equation}    
where
\begin{equation*}
    A = \partial^2_{x,y}\varphi(x_0,y_0) = \frac{L}{|x_0-y_0|} \begin{pmatrix}
        \;\;Z &-Z\\
        -Z & \;\;Z
    \end{pmatrix}, \qquad Z = \mathbb{I}_n - \frac{x_0-x_0}{|x_0-y_0|} \otimes \frac{x_0-y_0}{|x_0-y_0|} . 
\end{equation*}
The subsolution test and supersolution test for $u$ give us
\begin{equation}\label{eq:viscTests}
	\begin{aligned}
	H\left(x_0, L\frac{x_0-y_0}{|x_0-y_0|}, u(x_0)\right) - \mathrm{Tr}(X_\mu) \leq c  
		\quad \text{and}\quad 	
     H\left(y_0, L\frac{x_0-y_0}{|x_0-y_0|}, v(y_0)\right) - \mathrm{Tr}(Y_\mu) \geq c . 
	\end{aligned}
\end{equation}
From \eqref{eq:matrix-1} we have
\begin{equation*}
    \begin{pmatrix}
        s \xi \\
        \xi 
    \end{pmatrix} ^T
    \begin{pmatrix}
        X_\mu & 0 \\
        0 &-Y_\mu 
    \end{pmatrix}
    \begin{pmatrix}
        s\xi\\
        \xi
    \end{pmatrix} 
    \leq 
    \begin{pmatrix}
        s \xi \\
        \xi 
    \end{pmatrix} ^T
    A
    \begin{pmatrix}
        s\xi\\
        \xi
    \end{pmatrix} 
    + 
    \mu 
    \begin{pmatrix}
        s \xi \\
        \xi 
    \end{pmatrix} ^T
    A^2
    \begin{pmatrix}
        s\xi\\
        \xi
    \end{pmatrix} 
\end{equation*}
for $s>1$ and $\xi\in \R^n$. We compute
\begin{align*}
     \begin{pmatrix}
        s \xi \\
        \xi 
    \end{pmatrix} ^T
    A
    \begin{pmatrix}
        s\xi\\
        \xi
    \end{pmatrix} = \frac{L}{|x_0-y_0|} (s-1)^2 \xi^T Z\xi .
\end{align*}
We note that 
\begin{equation*}
    \xi^T Z\xi = \xi^T \left(\mathbb{I}_n - \frac{x_0-y_0}{|x_0-y_0|}\cdot \left(\frac{x_0-y_0}{|x_0-y_0|}\right)^T\right) \xi  = |\xi|^2 -  \left|\xi^T \cdot \frac{x_0-y_0}{|x_0-y_0|}\right| ^2 \leq  |\xi|^2. 
\end{equation*}
Therefore
\begin{equation}\label{eq:estA1}
    \xi^T (s^2 X_\mu - Y_\mu) \xi \leq \frac{L(s-1)^2}{|x_0-y_0|}|\xi|^2 
    + \mu \begin{pmatrix}
        s \xi \\
        \xi 
    \end{pmatrix} ^T
    A^2
    \begin{pmatrix}
        s\xi\\
        \xi
    \end{pmatrix} . 
\end{equation}
Since $A^T = A$, we compute 
\begin{align*}
    \begin{pmatrix}
        s \xi \\
        \xi 
    \end{pmatrix} ^T
    A^2
    \begin{pmatrix}
        s\xi\\
        \xi
    \end{pmatrix}  = 
    \left(\begin{pmatrix}
        s \xi \\
        \xi 
    \end{pmatrix} ^T A \right) 
    \left(A \begin{pmatrix}
        s \xi \\
        \xi 
    \end{pmatrix} \right)  
    = 
    \left(A \begin{pmatrix}
        s \xi \\
        \xi 
    \end{pmatrix} \right) ^T
    \left(A \begin{pmatrix}
        s \xi \\
        \xi 
    \end{pmatrix} \right)  = 
    \left |
    A \begin{pmatrix}
        s \xi \\
        \xi 
    \end{pmatrix} 
    \right |^2.
\end{align*}
We compute
\begin{align}\label{eq:estA2}
     \left |
    A \begin{pmatrix}
        s \xi \\
        \xi 
    \end{pmatrix} 
    \right |^2 = \frac{2L^2(s-1)^2}{|x_0-y_0|^2} \Vert Z\xi\Vert^2 \leq \frac{2L^2(s-1)^2}{|x_0-y_0|^2} \Vert Z\Vert^2 |\xi|^2. 
\end{align}
We deduce from \eqref{eq:estA1} and \eqref{eq:estA2} that 
\begin{align*}
	\xi^T (s^2 X_\mu - Y_\mu) \xi
	\leq 	\left(	\frac{L(s-1)^2}{|x_0-y_0|^2} + \frac{2L^2(s-1)^2}{|x_0-y_0|^2} \Vert Z\Vert^2 \right)|\xi|^2.
\end{align*}
Therefore, from \eqref{eq:viscTests} we obtain
\begin{align*}
    &s^2  H\left(x_0, L\frac{x_0-y_0}{|x_0-y_0|}, u(x_0)\right)
    -
     H\left(y_0, L\frac{x_0-y_0}{|x_0-y_0|}, u(y_0)\right)  \\
     &\qquad\qquad\qquad\qquad 
     \leq 
               \mathrm{Tr}(s^2X_\mu - Y_\mu) + (s^2-1)c \\
     &\qquad\qquad\qquad\qquad
     \leq 
       \frac{nL(s-1)^2}{|x_0-y_0|} + n\mu \frac{2L^2(s-1)^2}{|x_0-y_0|^2} \Vert Z\Vert^2  + (s^2-1)c. 
\end{align*}
Let $s^2 = 1 + \beta|x_0-y_0|$ with $\beta>0$ to be selected later. We have 
\begin{equation*}
    (s-1)^2 \leq (s+1)^2(s-1)^2 \leq (s^2-1)^2 = \beta^2 |x_0-y_0|^2.     
\end{equation*}
We compute
\begin{align*}
    & (s^2-1)H\left(x_0, L\frac{x_0-y_0}{|x_0-y_0|}, u(x_0)\right) + 
    \left( 
    H\left(x_0, L\frac{x_0-y_0}{|x_0-y_0|}, u(x_0)\right) 
        - 
    H\left(y_0, L\frac{x_0-y_0}{|x_0-y_0|}, u(y_0)\right)
    \right)  \\
    &\qquad\qquad \qquad\qquad 
    \leq 
    nL\beta^2|x_0-y_0| + n\mu L^2\beta^2 \Vert Z\Vert^2 + \beta |x_0-y_0|c.
\end{align*}
For simplicity, let $\xi_0 = \frac{x_0-y_0}{|x_0-y_0|}$. We have
\begin{align*}
    & H(x_0, L\xi_0, u(x_0)) - H(y_0, L\xi_0, u(y_0))  \\
    & 
    \qquad = 
    H(x_0, L\xi_0, u(x_0)) - H(y_0, L\xi_0, u(x_0)) 
    + 
    H(y_0, L\xi_0, u(x_0)) - H(y_0, L\xi_0, u(y_0)) = I_1+ I_2.
\end{align*}
From \eqref{eq:positive-max} we have $u(x_0) > u(y_0)$, thus by \ref{itm:A5} we have $I_2\geq 0$.
On the other hand, from \ref{itm:A7} we have
\begin{align*}
    I_1 &= H(x_0, L\xi_0, u(x_0)) - H(y_0, L\xi_0, u(x_0)) \\
    &\qquad \qquad\qquad \qquad 
    \geq -(\Lambda_\ell L^m + M_\ell) |x_0-y_0|, \qquad \text{where}\qquad \ell  = \Vert u\Vert_{L^\infty(\T^n)}. 
\end{align*}
From \ref{itm:A5} we have $\big| H(x_0, L\xi_0, u(x_0)) - H(x_0, L\xi_0, 0) \big| 
    \leq 
    \rho^* \Vert u\Vert_{L^\infty(\T^n)}$.     
Therefore, by \ref{itm:A6} we have
\begin{equation*}
    H\big(x_0, L\xi_0, u(x_0)\big) 
        \geq 
    H\big(x_0, L\xi_0, 0\big) 
        -
    \rho^* \Vert u\Vert_{L^\infty(\T^n)}  \geq \Lambda_0|L|^m - M_0 - \rho^* \Vert u\Vert_{L^\infty(\T^n)}. 
\end{equation*}
We conclude that 
\begin{align*}
    & \beta|x_0-y_0|\Big( 
    \Lambda_0|L|^m - M_0 - \rho^* \Vert u\Vert_{L^\infty(\T^n)} 
    \Big) 
    -(\Lambda_\ell L^m + M_\ell) |x_0-y_0|  \\
    &\qquad\qquad\qquad 
    \leq 
    nL\beta^2|x_0-y_0| + n\mu L^2\beta^2 \Vert Z\Vert^2 + \beta |x_0-y_0||c|.
\end{align*}
Hence
\begin{align*}
    \beta\Big( 
    \Lambda_0|L|^m - M_0 - \rho^* \Vert u\Vert_{L^\infty(\T^n)} 
    \Big) 
    - (\Lambda_\ell L^m + M_\ell) 
    \leq 
    n L\beta^2 + \frac{n\mu L^2\beta^2 \Vert Z\Vert^2}{|x_0-y_0|} + \beta |c|.
\end{align*}
Sending $\mu\to 0^+$ we obtain 
\begin{align*}
    (\beta \Lambda_0 - \Lambda_\ell)L^m - \Big( \beta (M_0 + \rho^* \ell) +  M_\ell \Big) 
    \leq 
    n L\beta^2 + \beta |c|. 
\end{align*}
Choose $\beta = \frac{2\Lambda_\ell}{\Lambda_0}$, we deduce that 
\begin{align*}
	\Lambda_\ell L^m - \left(\frac{2\Lambda_\ell(M_0+\rho^*)\ell}{\Lambda_0}+M_\ell\right) \leq \frac{4\Lambda_\ell^2}{\Lambda_0^2} L + \frac{2\Lambda_\ell}{\Lambda_0} |c|. 
\end{align*}
In other words, we have
\begin{align*}
    L^m \leq \frac{4\Lambda_\ell}{\Lambda_0} L + \left(\frac{2|c|}{\Lambda_0} +  \frac{2(M_0+\rho^*)\ell}{\Lambda_0}+\frac{M_\ell}{\Lambda_\ell}\right) . 
\end{align*}
This implies that
\begin{align*}
    L\leq 2^{\frac{1}{m-1}}
    \left[
        \left(\frac{4\Lambda_\ell}{\Lambda_0}\right)^\frac{1}{m-1} 
        + 
        \left(\frac{2|c| + 2(M_0+\rho^*)\ell}{\Lambda_0}+\frac{M_\ell}{\Lambda_\ell}\right)^\frac{1}{m}
    \right]. 
\end{align*}
This contradicts the assumption \eqref{eq:Lipschitz-contrary}, yielding the desired Lipschitz property. 
\medskip

\noindent
As a consequence, $f = -H(x,du,u) + c \in L^p(\T^n,\R)$ for all $p\in [1,\infty]$. From $    -\Delta u = f$ in $\T^n$ we deduce, by standard elliptic regularity theory \cite{gilbarg_elliptic_2001} that $u\in W^{2,p}(\T^n,\R)$ for all $p\in (1,\infty)$, and thus $u\in C^{1,\alpha}(\T^n,\R)$ for all $\alpha \in (0,1)$. 
Since $\Vert du\Vert_{L^\infty(\T^n)}\leq C$ and $\Vert u\Vert_{L^\infty(\T^n)}\leq \ell$, 
we may modify $H(\cdot,\xi,u)$ for large $|\xi|$ and $|u|$ so that $H$ is Lipschitz with respect to $(\xi,u)$. Together with \ref{itm:A7}, we may assume without loss of generality that $H$ is Lipschitz with respect to all of its arguments. Then $f = -H(x,du,u) + c \in C^{0,\alpha}(\T^n,\R)$, and thus, by Schauder estimate \cite{gilbarg_elliptic_2001}, we obtain that $u\in C^{2,\alpha}(\T^n,\R)$ for all $\alpha \in (0,1)$. 
\end{proof}

Secondly, we prove that for any \(c \in \mathcal{C}_1\), defined as in \eqref{eq:C1def}, equation \eqref{eq:HJ-ep-2nd} is solvable for all \(\varepsilon > 0\). The fact that the definition of \(\mathcal{C}_1\) does not depend on \(\varepsilon\) calls for an explanation. For each \(\theta \in \mathbb{R}\) and \(p \in \mathbb{R}^n\), set
\[
\mathbf{H}_\theta(x,p) := H(x,p,\theta).
\]
By \cite[Theorem~5]{gomes_stochastic_2002}, there exists a unique ergodic constant \(\overline{H}(p,\theta) = \overline{\mathbf{H}}_\theta(p) \in \mathbb{R}\) such that \eqref{eq:erg-const-2nd}, i.e.,
\begin{equation*}
    H(x, p + d_x u,\theta) = \Dt u+\overline{H}(p, \theta),\quad x\in\T^n
\end{equation*}
admits a solution.

\begin{lem} \label{lem:ExisTheta} Assume \ref{itm:A1}--\ref{itm:A7}. For each $\theta\in \R$, there exists a classical solution $u_\eps\in C^2(\T^n,\R)$ to \eqref{eq:HJ-ep-2nd} with $c=\overline H(0,\theta)$. Moreover, $\{du_\eps\}_{\eps\in(0,1)}$ can be chosen to be uniformly bounded and uniformly Lipschitz.
\end{lem}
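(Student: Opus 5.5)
We need a solution of $H(x/\eps,du,u)=\eps\Dt u+c$ on $\eps\T^n$ with $c=\overline H(0,\theta)$. The plan is a vanishing discount procedure centred at the level $\theta$, carried out first in the fast variable and then rescaled. In the variable $y=x/\eps\in\T^n$, set $u(x)=\eps v(x/\eps)+\text{const}$; then $du=dv(y)$ and $\eps\Dt_x u=\Dt_y v$, so the equation becomes
\[
H\big(y,dv,\theta+\eps v\big)-\Dt v=c,\qquad y\in\T^n,
\]
when we write $u=\theta+\eps v$. The task is to find a $C^2$ solution $v=v_\eps$ of this equation with $\eps\|v_\eps\|_{L^\infty}$ controlled and $\|dv_\eps\|_{L^\infty}$ bounded uniformly in $\eps$.

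\textbf{Step 1: approximate problems.} For $\lambda>0$ consider
\[
\lambda w+H(y,dw,\theta+\eps w)-\Dt w=c .
\]
This operator is strictly monotone in $w$, because $\partial_\theta H\ge 0$. The classical Perron method and comparison principle therefore apply, after truncating $H$ in $p$, which is harmless once gradient bounds are available. We take barriers $\pm\phi_\theta \pm K/\lambda$, where $\phi_\theta$ is a $C^2$ solution of the cell problem $H(y,d\phi_\theta,\theta)-\Dt\phi_\theta=c$ provided by Gomes together with Proposition~\ref{prop:Bernstein}. With $K=\|\phi_\theta\|_\infty$ one checks, using the monotonicity in $\theta$, that $\phi_\theta+2K/\lambda+\cdots$ is a supersolution and $\phi_\theta-2K/\lambda$ is a subsolution. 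This yields a unique solution $w_\lambda$ with $\|w_\lambda - \phi_\theta\|_\infty\le C/\lambda$.

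\textbf{Step 2: uniform estimates.} We normalize by $\hat w_\lambda=w_\lambda-w_\lambda(0)$. Gradient bounds come from the Bernstein argument of Proposition~\ref{prop:Bernstein}. The zeroth-order term $\lambda w$ only helps there (it has the right sign, like $I_2\ge 0$), and $\ell$ enters only through \ref{itm:A5}/\ref{itm:A7} via $\theta+\eps w_\lambda$. The real difficulty is a bound on $\eps w_\lambda$ uniform in $\lambda$, and this is the main obstacle. I would argue as follows. At a maximum point of $w_\lambda$ compared with $\phi_\theta$, the structure forces $\lambda w_\lambda+[H(\cdot,\theta+\eps w_\lambda)-H(\cdot,\theta)]\le 0$ at that point, hence $\max(w_\lambda-\phi_\theta)\le \max\phi_\theta-\min\phi_\theta$ up to constants; symmetrically at the minimum. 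This gives $\|w_\lambda-\phi_\theta\|_\infty\le 2\|\phi_\theta\|_\infty$ independently of $\lambda$ and $\eps$. Combined with the $\theta$-dependence on a compact range $[\theta-\ell,\theta+\ell]$, \ref{itm:A6}--\ref{itm:A7} give $\|dw_\lambda\|_\infty\le C$ independent of $\lambda$ and $\eps\in(0,1)$. Elliptic $W^{2,p}$ and Schauder estimates then give uniform $C^{2,\alpha}$ bounds.

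\textbf{Step 3: passage to the limit.} Since $\lambda w_\lambda$ is bounded by $C\lambda\to0$, we extract by Arzel\`a--Ascoli $w_\lambda\to v_\eps$ in $C^2$. The limit solves $H(y,dv_\eps,\theta+\eps v_\eps)-\Dt v_\eps=c$ classically. Undoing the scaling, $u_\eps(x)=\theta+\eps v_\eps(x/\eps)$ is a $C^2$ solution of \eqref{eq:HJ-ep-2nd} with $c=\overline H(0,\theta)$. Moreover $du_\eps(x)=dv_\eps(x/\eps)$ is bounded uniformly in $\eps$, and $u_\eps$ is uniformly Lipschitz with $\|u_\eps-\theta\|_\infty\le C\eps$.

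Step 2 carries the whole argument. If the direct maximum-point comparison of $w_\lambda$ with $\phi_\theta$ fails, for instance because the $\theta$-monotonicity has the wrong sign on part of the domain, the fallback is the following. Apply the comparison of Step 1 to the two barriers separately, and exploit that $\theta+\eps w$ lies above $\theta$ exactly where $w>0$. Then $H(\cdot,\theta+\eps w)\ge H(\cdot,\theta)$ there, which is the direction needed for the supersolution barrier; the symmetric statement handles the subsolution barrier.
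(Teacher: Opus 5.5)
Your proposal is correct and follows essentially the paper's argument. It uses a vanishing-discount problem centred at $\theta$ (your $\lambda w$ with $u=\theta+\eps w$ is, after rescaling, the paper's $\lambda u$ on the left balanced by $\lambda\theta$ on the right), barriers from the $\theta$-cell solution, the Bernstein bound of Proposition~\ref{prop:Bernstein} for gradient estimates uniform in $\lambda$ and $\eps$, and the limit $\lambda\to0_+$. The one simplification you missed is that $\phi_\theta\pm\|\phi_\theta\|_\infty$ are already $\lambda$-independent super/subsolutions (exactly the paper's $\om_\theta^\pm$ after rescaling), so the $K/\lambda$ in Step~1, the ``main obstacle'' of Step~2 and the fallback paragraph are unnecessary: your touching argument in Step~2 simply re-proves this comparison.
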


\begin{proof}
    By \cite[Theorem 5]{gomes_stochastic_2002}, there exists a unique classical solution $u_\theta \in C^2(\T^n,\R)$ with $u_\theta(0) = 0$ to $H(x,du_\theta, \theta) = \Delta u_\theta + \overline{H}(0,\theta)$ in $\T^n$. By \cite[Proposition 3]{gomes_stochastic_2002} we have
\begin{equation*}
    \inf_{x\in \T^n} H(x,0,\theta) \leq \overline{H}(0,\theta) \leq \sup_{x\in \T^n} H(x,0,\theta). 
\end{equation*}
Let $\om_\theta(x):=\eps u_\theta(x/\eps)$ for $x\in\eps\T^n$, then 
\begin{equation}\label{eq:sotheta}
    H\left(\frac{x}{\varepsilon}, d\om_\theta(x), \theta\right)=\overline{H}(0,\theta)+\eps\Dt\om_\theta(x),\quad x\in\eps\T^n.  
\end{equation}
Let $\om_\theta^{\pm}(x) := \theta + \om_\theta(x) \pm \Vert \om_\theta\Vert_{L^\infty(\T^n)}$ then $\om_\theta^- \leq \theta \leq \om_\theta^+$ in $\eps\T^n$. Consequently, For $\lambda>0$, we have 
\begin{align*}
    & \lambda \omega_\theta^+ + H\left(\frac{x}{\varepsilon}, d\omega_\theta^+, \omega_\theta^+\right) - \varepsilon \Delta \omega_\theta^+ \geq \lambda \theta + H\left(\frac{x}{\varepsilon}, d\omega_\theta, \theta\right) - \varepsilon \Delta \omega_\theta = \lambda\theta + \overline{H}(0,\theta), \quad x\in \varepsilon \T^n , \\
    & \lambda \omega_\theta^- + H\left(\frac{x}{\varepsilon}, d\omega_\theta^-, \omega_\theta^-\right) - \varepsilon \Delta \omega_\theta^- \leq \lambda \theta + H\left(\frac{x}{\varepsilon}, d\omega_\theta, \theta\right) - \varepsilon \Delta \omega_\theta = \lambda\theta + \overline{H}(0,\theta), \quad x\in \varepsilon \T^n. 
\end{align*}
Therefore, for each $\theta \in \R$, $\om_\theta^\pm$ are viscosity supersolution and subsolution of 
\begin{equation}\label{eq:HJB2nd-theta}
   \lambda u+ H\left(\frac{x}{\varepsilon},du, u\right) = \eps\Delta u + \lb\theta+\overline{H}(0,\theta), \qquad x\in \eps\T^n. 
\end{equation}
By the Perron method \cite[Theorem 4.1]{crandall_users_1992}, 
\begin{equation*}
    w_\lambda(x) := \sup 
    \left\lbrace 
        w:\eps\T^n\to \R,      w\;\text{is a viscosity subsolution to}\;\eqref{eq:HJB2nd-theta}, \om_\theta^-\leq w \leq \om_\theta^+ 
    \right\rbrace 
\end{equation*}
is the unique continuous viscosity solution to \eqref{eq:HJB2nd-theta}, since 
the comparison principle holds  for $\lb>0$. As we can see, $\{\om_\lb\}$ are  equi-bounded  for $\lb\in(0,1)$. It remains to show $\{\om_\lb\}_{\lb\in(0,1)}$ are also equi-Lipschitz. If so, then any accumulating function of $\om_\lb$ as $\lb\rightarrow 0_+$ has to be a solution of \eqref{eq:HJ-ep-2nd}, so we finish the proof. 
For this purpose, we take 
\begin{equation*}
    \widetilde{\omega}_\lambda(y):= \frac{1}{\varepsilon}\omega_\lambda(\varepsilon y), \qquad y\in\T^n    . 
\end{equation*}
Then $\widetilde{\omega}_\lambda$ is the solution to 
\begin{equation*}
    \lambda \varepsilon \widetilde{\omega}_\lambda(y) + H(y, d\widetilde{\omega}_\lambda(y), \varepsilon \widetilde{\omega}_\lambda(y))  = \Delta \widetilde{\omega}_\lambda(x) + \lambda\theta + \overline{H}(0,\theta), \qquad y\in \T^n. 
\end{equation*}
Define the new Hamiltonian 
\begin{equation*}
    \mathcal{H}_{\lambda,\varepsilon}(x,p,r):=  \lambda \varepsilon r + H(x, p, \varepsilon r),  \qquad (x,p,r)\in \T^n\times \R^n\times \R. 
\end{equation*}
By Proposition \ref{prop:Bernstein} with $c=c(\theta)+\lb\theta$, we obtain that $\{\widetilde{\omega}_\lb\}$ is uniformly Lipschitz, since the replaced Hamiltonian $\mathcal{H}_{\lambda, \varepsilon}$ satisfies \ref{itm:A5}--\ref{itm:A7} as the original Hamiltonian $H$ does. As a consequence, $\{\omega_\lambda:\lambda>0\}$ is uniformly Lipschitz, and the proof is complete. 
\end{proof}

\begin{rmk}
Under assumptions \ref{itm:A1}--\ref{itm:A7}, the conclusion of Theorem~\ref{thm:main-2}-(i) holds due to  Lemma \ref{lem:ExisTheta}. 
\end{rmk}

We conclude this section with a remark presenting an alternative, more direct proof of the existence of $u_\varepsilon$ for \eqref{eq:HJB2nd-th} without using vanishing discount procedure, under the additional assumption that $H(x,p,r)$ has superquadratic growth in the $p$-variable. This approach relies on the uniform H\"older continuity of solutions to \eqref{eq:HJB2nd-theta} under the superquadratic growth assumption on the Hamiltonian \cite{Armstrong2015ViscosityEquations, barles_short_2010, capuzzo_dolcetta_holder_2010}.

\begin{lem} Assume \ref{itm:A1}--\ref{itm:A5} and \ref{itm:A6},\ref{itm:A7} with $m>2$. For each $\theta\in \R$, there exists a classical solution $u_\varepsilon \in C^2(\T^n,\R)$ to \eqref{eq:HJB2nd-th} with $c = \overline{H}(0,\theta)$. 
\end{lem}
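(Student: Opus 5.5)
Having a uniform Hölder bound, I would run a Perron argument directly between the ordered barriers built in Lemma \ref{lem:ExisTheta}, with no discount $\lambda>0$. Fix $\theta\in\R$, set $c=\overline H(0,\theta)$, and let $u_\theta\in C^2(\T^n,\R)$ solve $H(x,du_\theta,\theta)=\Delta u_\theta+c$ (from \cite[Theorem 5]{gomes_stochastic_2002}). The functions $\om^\pm:=\theta+u_\theta\pm\|u_\theta\|_{L^\infty}$ satisfy $\om^-\le\theta\le\om^+$. Since $H$ is nondecreasing in the last variable by \ref{itm:A3}, $\om^+$ is a supersolution and $\om^-$ a subsolution of \eqref{eq:HJB2nd-th}. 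I would then define
\[
u:=\sup\{w:\ w \text{ is a viscosity subsolution of } \eqref{eq:HJB2nd-th},\ \om^-\le w\le \om^+\}.
\]
This is the standard Perron function, and it is bounded by $\ell:=\theta+2\|u_\theta\|_{L^\infty}$ in absolute value.

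The standard half of Perron's method \cite[Theorem 4.1]{crandall_users_1992} needs no comparison principle. Its conclusion is that $u^*$ is a subsolution, $u_*$ is a supersolution, and the usual bump construction shows that $u$ is a solution in the discontinuous sense of Definition \ref{def:viscosity2nd}. The only place comparison is normally used is to show $u^*=u_*$, that is, that $u$ is continuous. I would replace that step with an a priori regularity estimate.

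The key step, and the main obstacle, is a uniform Hölder estimate for bounded subsolutions when $m>2$. Every admissible $w$ satisfies $|w|\le\ell$, and by \ref{itm:A5} and \ref{itm:A6},
\[
H(x,p,w)\ge H(x,p,0)-\rho^*\ell\ge \Lambda_0|p|^m-M_0-\rho^*\ell .
\]
Hence $w$ (and likewise $w^*$) is a viscosity subsolution of $-\Delta w+\Lambda_0|dw|^m\le C_\ell:=c+M_0+\rho^*\ell$. By the results of \cite{barles_short_2010, capuzzo_dolcetta_holder_2010} (see also \cite{Armstrong2015ViscosityEquations}), bounded subsolutions of such superquadratic inequalities are Hölder continuous with exponent $\alpha=(m-2)/(m-1)$. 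The constant depends only on $n,m,\Lambda_0,C_\ell,\ell$. I expect the delicate part to be checking that the cited estimate applies to subsolutions with a right-hand side depending on the bound $\ell$ and on a torus rather than a bounded domain. If needed, I would lift the problem to $\R^n$ periodically and apply the local estimate on unit balls.

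With the estimate in hand, the family of admissible $w$ is equi-Hölder, so the supremum $u$ is Hölder with the same constant, and $u^*=u=u_*$. Thus $u\in C(\T^n)$ is a viscosity solution of \eqref{eq:HJB2nd-th}. Proposition \ref{prop:Bernstein} then applies with $\ell=\|u\|_{L^\infty}$: it gives $u$ Lipschitz and, by $W^{2,p}$ and Schauder bootstrapping, $u\in C^{2}(\T^n,\R)$. This classical solution is $u_\eps$ (for $\eps=1$), which proves the lemma.
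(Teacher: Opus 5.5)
Your proposal is correct and follows essentially the same route as the paper's sketch: Perron's method between the barriers $\theta+u_\theta\pm\|u_\theta\|_{L^\infty}$ with no discount, continuity of the Perron function from the uniform H\"older estimate ($\alpha=(m-2)/(m-1)$) for subsolutions of superquadratic viscous equations, and then Proposition~\ref{prop:Bernstein} to upgrade the continuous solution to $C^2$. Your explicit reduction via \ref{itm:A5}--\ref{itm:A6} to $-\Delta w+\Lambda_0|dw|^m\le C_\ell$ is a detail the paper leaves implicit. Two small fixes: the bound should read $\ell=|\theta|+2\|u_\theta\|_{L^\infty}$, and you should apply the H\"older estimate to the envelopes $w^*$. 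These are themselves admissible, so $u=\sup w^*$ is H\"older.
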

\begin{proof}[Sketch of a proof] From \cite{gomes_stochastic_2002} we obtain a subsolution $\omega^-_\theta$ and a supersolution $\omega^+_\theta$ of \eqref{eq:HJB2nd-th}, constructed via \eqref{eq:sotheta}. By the Perron method \cite[Theorem 4.1]{crandall_users_1992},
\begin{equation}\label{eq:subdefnition}
    w(x) := \sup 
    \left\lbrace 
        v:\varepsilon\T^n\to \R \;:\; v \text{ is a viscosity subsolution of } \eqref{eq:HJB2nd-th},\;
        \omega^-_\theta \le v \le \omega^+_\theta
    \right\rbrace
\end{equation}
is a viscosity solution of \eqref{eq:HJB2nd-th}. In the absence of a comparison principle, however, the continuity of this solution is not immediate. 

Since $m>2$ in \ref{itm:A6}--\ref{itm:A7}, it follows from \cite[Lemma 3.2]{Armstrong2015ViscosityEquations} (see also \cite{barles_short_2010, capuzzo_dolcetta_holder_2010}) that any subsolution in the admissible class of \eqref{eq:subdefnition} is H\"older continuous with a uniform H\"older estimate
\begin{equation*}
    |v(x) - v(y)| \le K |x-y|^\alpha,
    \qquad \alpha = \frac{m-2}{m-1},
\end{equation*}
where $K$ depends only on $n$, $\varepsilon$, and $H$ through \ref{itm:A1}--\ref{itm:A7}. Consequently, the Perron solution $w$ is also H\"older continuous and therefore belongs to \(C^2\) by Proposition~\ref{prop:Bernstein}.
\end{proof}

\section{Comparison principle for viscous HJ equations and the quatitative homogenization}\label{s5}

\subsection{Stochastic Mather measures and ergodic constants}
Before proving homogenization, we recall several basic results established in the works of Gomes~\cite{gomes_stochastic_2002}, Iturriaga and Morgado~\cite{iturriaga_stochastic_2005}, and Fleming and Soner~\cite{fleming_controlled_2006}.
The Hamiltonian in the following lemma is under the assumption \ref{itm:A1}, \ref{itm:A2}, \ref{itm:A4}. 

\begin{lem}[{\cite{fleming_controlled_2006,gomes_stochastic_2002,iturriaga_stochastic_2005}}]\label{lem:b-con} \quad 
\begin{enumerate}[(i)]
\item The ergodic problem
\begin{equation}\label{eq:HJ-2nd}
    H(x,p + du) = \Dt u + \overline H(p) \qquad\text{in}\;\T^n.    
\end{equation}
admits a unique viscosity solution (up to additive constants) $u$ and $\overline H(p)$ is the unique value that makes the equation solvable on $\T^n$. Moreover, if $v$ is a $C^2$ subsolution to \eqref{eq:HJ-2nd}, $v$ is actually a solution. 
\item The ergodic constant admits a representation
\begin{equation*}
    \overline H(p)=\min_{\varphi \in C^2(\T^n,\R)}\max_{x \in \T^n} H(x,p+d\varphi(x)) - \Dt\varphi(x).     
\end{equation*}
\item We denote by $\mathcal{H}_1$ the set of holonomic measure, namely probability measures on $T\T^n$ such that 
\begin{itemize}
\item $\int_{T\T^n} \big( \langle d_x \phi,v \rangle + \Dt \phi \big) d\mu = 0$ for any $\phi \in C^2(\T^n,\R)$;
\item $\int_{T\T^n} \vert v \vert d\mu < \infty$
\end{itemize}
Then
\begin{equation*}
    \overline H(p)=-\inf_{\mu\in\cH_1}\int \langle p,v\rangle +  L(x, - v)d\mu. 
\end{equation*}
The infimum can be obtained by the so called {\bf stochastic Mather measures}, which is unique. 

\item Let $\mu$ be the stochastic Mather measure, and let $\nu := \pi_1 \# \mu$ denote its projection onto $\T^n$, where $\pi_1 : \T^n \times \R^n \to \T^n$ is the canonical projection.
Then $d\nu = \vartheta(x)\,dx$ for some density $\vartheta$ with $\vartheta(x) > 0$ almost everywhere, and is a weak solution of
\begin{equation*}
    -{\rm div}\!\big(\vartheta(x)\, D_pH(x, p + Du(x))\big) - \Delta \vartheta(x) = 0.
\end{equation*}
For any $f \in C_c(\T^n \times \R^n,\R)$, we have
\begin{equation*}
    \int_{\T^n \times \R^n} f(x,v) d\mu = \int_{\T^n}f(x,D_p H(x, p + du(x)))\vartheta(x)dx.     
\end{equation*}

\item The Markov diffusion $dx = - D_pH(x, p + du(x))dt + \sqrt{2} dB_w(t)$, where $B_w$ stands for the standard $n$-dimensional Brownian motion, can define a measure $\mu_T$ in the way such that 
\begin{equation*}
    \int \varphi(x,v) d\mu_T = \frac{1}{T} \mathbb{E}\left[\int_0^T \varphi(x(t), D_pH(x, p + du(x)))dt\right], \quad \varphi \in C^2(\T^n \times \R^n,\R). 
\end{equation*}
Let $T\to\infty$, $\mu_T\rightharpoonup \mu_T$ where $\mu$ is the stochastic Mather measure. 

\item If $u_p$ is a solution to \eqref{eq:HJ-2nd}, then we have the following representation
\begin{equation*}
    u_p(x) = \inf \mathbb E\left[ u_p(x(t)) + \int_0^t \Big( \langle p, \mathfrak v \rangle + L(x(s), - \mathfrak v(x(s))) + \overline H(p) \Big) ds \right] 
\end{equation*}
where the infimum is taken in all measurable control $\mathfrak v$ with the Markov diffusion $dx = \mathfrak v(x)dt + \sqrt 2 dB_w$. Moreover, the infimum can be obtained by taking the control $\mathfrak v = -D_p H(x, p + du(x))$.

\item $\overline H(p)$ is convex in $p\in\R^n$ and $C^1$ smooth.
\end{enumerate}
\end{lem}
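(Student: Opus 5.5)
Since Lemma~\ref{lem:b-con} collects results of \cite{gomes_stochastic_2002,iturriaga_stochastic_2005,fleming_controlled_2006}, the plan is to organize the proof around one central object: the classical solution $u$ of \eqref{eq:HJ-2nd} together with the linearized operator $\mathcal L_u\phi := -D_pH(x,p+du)\cdot d\phi+\Dt\phi$. Items (i), (ii), (vii) come from the PDE side, and items (iii)--(vi) from a duality/verification argument built on $u$.

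For (i) I would use the vanishing discount scheme as in Lemma~\ref{lem:ExisTheta}. Solve $\lambda w_\lambda+H(x,p+dw_\lambda)=\Dt w_\lambda$, which is uniquely solvable for $\lambda>0$ by comparison. The Bernstein bound of Proposition~\ref{prop:Bernstein} gives that $w_\lambda-w_\lambda(0)$ is equi-Lipschitz and $\lambda w_\lambda$ is bounded. Extracting a limit yields $(u,\overline H(p))$, and Schauder theory upgrades $u$ to $C^{2,\alpha}$. For uniqueness, take two $C^2$ solutions $u_1,u_2$ (or a $C^2$ subsolution $v$ and the solution $u$). Subtracting and using convexity, $w=u_1-u_2$ (resp.\ $v-u$) satisfies a linear inequality
\[
b(x)\cdot dw-\Dt w\le c_1-c_2,\qquad b(x)=\int_0^1 D_pH\bigl(x,p+du_2+s\,dw\bigr)\,ds .
\]
The strong maximum principle on the compact manifold $\T^n$ then forces $c_1=c_2$ and $w$ to be constant. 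For the subsolution claim, the key point is that the strict inequality cannot survive integration against the positive invariant density; I postpone this to item (iv).

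Item (ii) follows directly. Choosing $\varphi=u$ gives the value $\overline H(p)$. If some $\varphi$ gave a strictly smaller max, comparing $\varphi$ with $u$ at a maximum point of $\varphi-u$ gives a contradiction. For (iv) I would take $\vartheta$ to be the unique probability solution of the adjoint equation $\mathcal L_u^*\vartheta=0$, i.e.\ $-{\rm div}(\vartheta D_pH)-\Dt\vartheta=0$. Existence and uniqueness follow from Krein--Rutman or Fredholm theory for the nondegenerate operator on $\T^n$. Positivity of $\vartheta$ comes from the Harnack inequality. I then define $\mu$ as the pushforward of $\vartheta\,dx$ under $x\mapsto(x,D_pH(x,p+du))$, which gives the formula in (iv).

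For (iii) there are two inequalities to check. First, for any $\tilde\mu\in\cH_1$ and any $C^2$ function $\varphi$, the Fenchel inequality gives
\[
\langle p,v\rangle+L(x,-v)\ge -H(x,p+d\varphi)+\langle d\varphi,-v\rangle .
\]
Integrating, the holonomic constraint turns $\int\langle d\varphi,-v\rangle\,d\tilde\mu$ into $\int\Dt\varphi\,d\tilde\mu$, so $\int(\langle p,v\rangle+L(x,-v))\,d\tilde\mu\ge -\max(H-\Dt\varphi)$. By (ii) this gives $\ge-\overline H(p)$. Second, $\mu$ from (iv) attains equality, because Fenchel is an equality at $v=D_pH$ and $\mathcal L_u^*\vartheta=0$ gives the holonomy. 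Here the sign convention $L(x,-v)$ has to be tracked carefully so that the velocity field is $-D_pH$, consistent with (v).

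Uniqueness of the minimizer is the step I expect to be the main obstacle. My plan is as follows. Equality in the chain forces $v=D_pH(x,p+du(x))$ to hold $\tilde\mu$-a.e.; this uses strict convexity, or, absent that, the uniqueness of $u$ together with a perturbation in $\varphi$. Hence $\pi_1\#\tilde\mu$ is an invariant measure of $\mathcal L_u$, which is unique. I will follow \cite{gomes_stochastic_2002} here, possibly adding the strict-convexity hypothesis that the paper implicitly uses. The same invariant-density argument also finishes the subsolution claim in (i): if $H-\Dt v\le\overline H$, integrating against $\mu$ with $\int\Dt v\,d\mu=-\int\langle dv,-v\rangle d\mu$ shows equality a.e., and hence everywhere by continuity and $\vartheta>0$.

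Items (v) and (vi) are verification arguments via It\^o's formula. For (vi), apply It\^o to $u_p(x(t))$ along an arbitrary controlled diffusion and use Fenchel: this gives "$\le$", with equality for the feedback $-D_pH$. For (v), the generator of that diffusion is $\mathcal L_u$ with unique invariant law $\vartheta\,dx$, so ergodicity gives $\mu_T\rightharpoonup\mu$. Finally, (vii) follows because (iii) expresses $\overline H$ as a supremum of affine functions of $p$, hence it is convex. For $C^1$ regularity, uniqueness of $\mu$ makes the subdifferential a singleton, namely $-\int v\,d\mu$ (Danskin's theorem).
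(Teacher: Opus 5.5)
The paper does not prove this lemma; it quotes it from Gomes, Iturriaga--Morgado and Fleming--Soner. Your proposal is therefore a genuinely different, self-contained route, organized around the classical solution $u$ and the linearized operator $\mathcal L_u$. It uses:
\begin{itemize}
\item vanishing discount, Proposition~\ref{prop:Bernstein} and Schauder estimates for existence;
\item the strong maximum principle for uniqueness and for (ii);
\item Fenchel plus holonomy for the duality in (iii);
\item ``minimizers live on a graph'' plus uniqueness of the invariant law, for uniqueness of the Mather measure;
\item It\^o verification for (v)--(vi), and a supremum of affine functions plus Danskin for (vii).
\end{itemize}
This architecture is sound. It also shows that every comparison step is the same linearization/strong-maximum-principle argument as in Lemma~\ref{lem:cp-2nd}. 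It does cost something. Proposition~\ref{prop:Bernstein} needs \ref{itm:A6}--\ref{itm:A7}, which are not among the hypotheses \ref{itm:A1}, \ref{itm:A2}, \ref{itm:A4} under which the lemma is stated. Your uniqueness among \emph{viscosity} solutions also rests on the $C^2$ regularity that proposition provides. So you prove the lemma in the setting where the paper actually uses it (for ${\bf H}_\varphi$ under \ref{itm:A1}--\ref{itm:A7}); say so explicitly. When you apply Proposition~\ref{prop:Bernstein} to the discounted problem, its stated constant depends on $\|w_\lambda\|_\infty\sim\lambda^{-1}$. Only the proof shows that just $\|\lambda w_\lambda\|_\infty$ enters.

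One step is wrong as written and must be fixed. With $\mathcal H_1$ and the action $\langle p,v\rangle+L(x,-v)$ as in (iii), equality in your Fenchel inequality holds iff $-v=D_pH(x,p+d\varphi)$. Likewise, $\mathcal L_u^*\vartheta=0$ yields $\int(\langle d\phi,v\rangle+\Delta\phi)\,d\mu=0$ only for $v=-D_pH(x,p+du)$. So $\mu$ must be the pushforward of $\vartheta\,dx$ under $x\mapsto(x,-D_pH(x,p+du))$; your pushforward under $+D_pH$ is not in $\mathcal H_1$. In fact the displayed formula in (iv), and the second argument of $\varphi$ in (v), carry a sign typo. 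If $v=D_pH$ on the support, holonomy and the PDE in (iv) together give $\Delta\vartheta=0$. For $H=\frac12|p|^2+V(x)$ this forces $\Delta u=0$ and hence $V$ constant. State the corrected version rather than claiming the formula verbatim.

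There are also four smaller points.
\begin{itemize}
\item You do not need strict convexity for the graph property. Equality in Fenchel--Young means $-v\in\partial_pH(x,p+du(x))$, which is a singleton because $H$ is $C^1$.
\item For uniqueness of $\pi_1\#\tilde\mu$, Fredholm/Krein--Rutman theory only gives uniqueness among densities. A priori $\pi_1\#\tilde\mu$ is just a measure with $\int\mathcal L_u\phi\,d\nu=0$. So invoke the Bogachev--Krylov--R\"ockner absolute-continuity theorem or Echeverr\'ia's theorem, or solve $\mathcal L_u\phi=f-\int f\vartheta$ in $C^{2,\alpha}$ when $D_pH$ is H\"older.
\item Danskin requires continuity of $q\mapsto\int v\,d\mu_q$. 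Here it comes cheaply: every minimizer is supported in $\{|v|\le\|D_pH(\cdot,q+du_q)\|_\infty\}$, which is locally bounded in $q$, and limits of minimizers are minimizers.
\item The subsolution claim in (i) needs no invariant density. For a $C^2$ subsolution $\tilde u$, the difference $w=\tilde u-u$ satisfies $b\cdot dw-\Delta w\le 0$. The strong maximum principle makes $w$ constant, so $d\tilde u=du$ and $\Delta\tilde u=\Delta u$.
\end{itemize}
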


For any $\varphi\in C^1(\T^n,\R)$, we can define a Hamiltonian 
\[
{\bf H}_\varphi(x,p) := H(x,p,\varphi(x)),\quad \forall (x,p)\in T^*\T^n
\]
and above Lemma is still available. With this setup, we can first establish the following properties of $\overline H(0,\theta)$:

\begin{lem}\label{prop:c} Assume \ref{itm:A1}--\ref{itm:A7}. Then the mapping $\theta\mapsto \overline H(0,\theta)$ is continuous and nondecreasing. 
\end{lem}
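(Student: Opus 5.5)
The plan is to argue directly from the min–max representation in Lemma \ref{lem:b-con}-(ii) applied to $\mathbf{H}_\theta(x,p)=H(x,p,\theta)$, namely
\[
\overline H(0,\theta)=\min_{\varphi\in C^2(\T^n,\R)}\max_{x\in\T^n}\big(H(x,d\varphi(x),\theta)-\Dt\varphi(x)\big).
\]
This mirrors the proof of Lemma \ref{lem:erg-1st} in the first-order case.

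\emph{Monotonicity.} Fix $a<b$ and let $\varphi_b\in C^2(\T^n,\R)$ attain the minimum for $\theta=b$; for instance, take the classical solution $u_b$ of \eqref{eq:erg-const-2nd} with $(p,\theta)=(0,b)$ given by \cite[Theorem 5]{gomes_stochastic_2002}. By \ref{itm:A3} we have $H(x,d\varphi_b,a)\le H(x,d\varphi_b,b)$ pointwise. Therefore
\[
\overline H(0,a)\le \max_x\big(H(x,d\varphi_b,a)-\Dt\varphi_b\big)\le \max_x\big(H(x,d\varphi_b,b)-\Dt\varphi_b\big)=\overline H(0,b).
\]

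\emph{Continuity.} I would use \ref{itm:A5}, which gives the global Lipschitz bound $|H(x,p,a)-H(x,p,b)|\le\rho^*|a-b|$ uniformly in $(x,p)$. Inserting the optimizer $\varphi_b$ into the min–max for $\theta=a$ gives $\overline H(0,a)\le\overline H(0,b)+\rho^*|a-b|$. The symmetric argument gives the reverse inequality, so $|\overline H(0,a)-\overline H(0,b)|\le\rho^*|a-b|$. This proves Lipschitz continuity, which is stronger than what is needed.

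\emph{Main obstacle.} The only delicate point is justifying that the minimum in Lemma \ref{lem:b-con}-(ii) is attained in $C^2$. Lemma \ref{lem:b-con}-(ii) asserts this, and it follows from existence of classical solutions of the ergodic problem (the $C^2$ regularity in Proposition \ref{prop:Bernstein} and \cite{gomes_stochastic_2002}). Even without attainment, an $\eta$-optimal $\varphi$ suffices: one gets the same inequalities up to $\eta$ and then lets $\eta\to0$.

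If one prefers not to invoke \ref{itm:A5}, there is a compactness fallback as in Lemma \ref{lem:erg-1st}. Take $a_k\to a$ and classical solutions $u_k$ normalized by $u_k(0)=0$. These are equi-Lipschitz by Proposition \ref{prop:Bernstein}, since $\overline H(0,a_k)$ is bounded by $\sup_x|H(x,0,a_k)|$ via \cite[Proposition 3]{gomes_stochastic_2002}. By stability of viscosity solutions, any limit pair solves the ergodic problem at $a$. Uniqueness of the ergodic constant then identifies the limit of $\overline H(0,a_k)$ as $\overline H(0,a)$.
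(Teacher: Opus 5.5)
Your proof is correct, and for continuity it takes a genuinely different route from the paper.

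\textbf{Monotonicity.} This part is essentially the paper's argument. The paper also takes the classical corrector $u_b$ at level $b$, uses \ref{itm:A3} to see that it is a subsolution at level $\overline H(0,b)$ for $\theta=a$, and concludes by minimality of the ergodic constant. Your insertion of $\varphi_b$ into the min--max formula encodes exactly this.

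\textbf{Continuity.} Here the two proofs differ.
\begin{itemize}
\item The paper argues by compactness, copying Lemma \ref{lem:erg-1st}. It takes correctors $u_n$ at levels $a_n\to a$, normalized by $u_n(0)=0$, and asserts they are equi-Lipschitz. It then passes to a limit by stability and identifies the limit constant through uniqueness of the ergodic constant.
\item You combine \ref{itm:A3} and \ref{itm:A5} into the global bound $0\le H(x,p,b)-H(x,p,a)\le \rho^*(b-a)$ for $a<b$. Inserting the optimizer at one level into the min--max at the other then gives the quantitative estimate $0\le \overline H(0,b)-\overline H(0,a)\le \rho^*(b-a)$.
\end{itemize}

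\textbf{What each route buys.} Your route gives a Lipschitz modulus with an explicit constant, which is stronger than the statement. It also needs no uniform gradient bound on the correctors: you only ever evaluate $H$ along one fixed $C^2$ function. This matters because the paper's claim that the $u_n$ are equi-Lipschitz ``due to \ref{itm:A2}'' is not a direct consequence of coercivity for viscous equations. That bound really comes from Proposition \ref{prop:Bernstein}, applied to the frozen Hamiltonians $H(\cdot,\cdot,a_n)$, which is what your fallback correctly invokes. The compactness route, in turn, needs only local uniform continuity of $H$ in $\theta$ plus gradient estimates, not a global bound on $\partial_\theta H$. So it is the argument that would survive if \ref{itm:A5} were weakened.
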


\begin{proof} 
Due to \ref{itm:A3}, for any $a<b$ in $\R$ we have
\begin{align*}
    H(x,d_x u_b,a) - \Dt u_b \leq H(x,d_x u_b,b) - \Dt u_b\leq \overline{H}(0,b),\quad {\rm a.e. } \;x\in \T^n,
\end{align*}
where $u_b$ (resp. $u_a$) is a solution of \eqref{eq:erg-const-2nd} with $(p,\theta)=(0,b)$ (resp. $=(0,a)$), therefore, 
\begin{equation*}
    \overline{H}(0,a) \leq \overline{H}(0,b). 
\end{equation*}
For any  sequence $\R\ni a_n\to a$ as $ n\rightarrow+\infty$, we can find a sequence of solutions $\{u_n\}_{n\in\N}$. Due to \ref{itm:A2}, $\{u_n\}_{n\in\N}$ are uniformly Lipschitz, then uniformly bounded once we impose  $u_n(0)\equiv 0$ for any $n\in\N$. Suppose $c_*$ is an accumulating point of $\overline H(0,a_n)$ as $n\rightarrow+\infty$, then the associated subsequence of $\{u_n\}_{n\in\N}$ also has an accumulating function $\om$ as $n\rightarrow +\infty$, which is exactly a solution of  
\[
H(x, d_x\om, a) - \Dt \om=c_*,\quad x\in \T^n.
\]
Due to \cite{fathi_pde_2005}, such a $c_*\in\R$ is unique, so $c_*=\overline H(0,a)$ and $\lim_{n\rightarrow+\infty} \overline H(0,a_n)=\overline H(0,a)$ follows. 
\end{proof}

\subsection{Weak comparison principle}
Next, we present a lemma analogous to Lemma~\ref{lem:comparision} for the first-order case. For the proof of Theorem~\ref{thm:main-2}, it suffices to establish a comparison principle between $C^2$ subsolutions and $C^2$ supersolutions. In this setting, the argument is simpler than that of Lemma~\ref{lem:comparision}, thanks to the availability of the strong maximum principle for uniformly elliptic operators.

\begin{lem}\label{lem:cp-2nd}
Assume \ref{itm:A1}--\ref{itm:A4}, and $\varphi, \psi \in C^1(\T^n, \R)$, suppose that $u,w \in C^2(\T^n, \R)$ such that 
\begin{align}
    \mathbf{H}_\varphi(x,d_x u) &:= H(x,d_x u,\varphi(x)) \leq c + \Dt u   \label{eq:pde-u-sub} \\
    \mathbf{H}_\psi(x,d_x w) &:= H(x,d_x w,\psi(x)) \geq c + \Dt w. 
    \label{eq:pde-w-super}
\end{align}
Then one of the following two holds:
\begin{enumerate}[(i)]

\item The maximum of $u - v$ can be attained at a point $x_{max} \in \T^n$ where $\varphi(x_{max}) \leq \psi(x_{max})$.

\item For any $x \in \T^n$, we have $\varphi(x) - \psi(x) > 0$ and 
\[
u(x) - w(x) \equiv \max_{\T^n}(u - w).
\]
\end{enumerate}
\end{lem}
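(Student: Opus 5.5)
Consider the set $K:=\{x\in\T^n:\ (u-w)(x)=\max_{\T^n}(u-w)\}$, which is nonempty and compact. If some $x_{max}\in K$ satisfies $\varphi(x_{max})\le\psi(x_{max})$, then alternative (i) holds (with $v$ read as $w$). So we assume that $\varphi>\psi$ on all of $K$, and we will show that $K=\T^n$. That gives (ii), since $\varphi-\psi>0$ on $K=\T^n$. Because $\T^n$ is connected and $K$ is closed, it is enough to show that $K$ is open. We do this with a strong maximum principle applied to $z:=u-w$ near a point of $K$.

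Fix $x_0\in K$. Since $\varphi,\psi$ are continuous, we can choose a ball $B=B_r(x_0)$ on which $\varphi>\psi$. We subtract \eqref{eq:pde-w-super} from \eqref{eq:pde-u-sub}. By \ref{itm:A3}, on $B$ we have $H(x,dw,\varphi(x))\ge H(x,dw,\psi(x))$, hence
\[
H(x,du,\varphi)-H(x,dw,\varphi)\le H(x,du,\varphi)-H(x,dw,\psi)\le \Dt u-\Dt w=\Dt z .
\]
By \ref{itm:A4}, $H(\cdot,\cdot,\varphi)$ is $C^1$ in $p$, so the mean value theorem gives
\[
H(x,du,\varphi)-H(x,dw,\varphi)=b(x)\cdot dz(x),\qquad b(x):=\int_0^1 D_pH\big(x,dw+s\,dz,\varphi(x)\big)\,ds .
\]
Here $b$ is continuous and bounded on $\overline B$, because $u,w\in C^2$ and $\varphi\in C^1$. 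Therefore $z$ satisfies the linear differential inequality $-\Dt z+b\cdot dz\le 0$ on $B$, so $z$ is a subsolution of a uniformly elliptic operator with bounded drift and no zeroth-order term. Since $z$ attains its maximum over $B$ at the interior point $x_0$, Hopf's strong maximum principle \cite{gilbarg_elliptic_2001} forces $z\equiv\max z$ on $B$, i.e. $B\subset K$. Thus $K$ is open, and the connectedness argument finishes the proof.

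The main obstacle is to keep the $\theta$-dependence from producing a zeroth-order term with the wrong sign. The natural split $H(x,du,\varphi)-H(x,dw,\psi)$ could instead be written through $u,w$ themselves, which would give a term $c(x)z$ whose sign is not controlled. Comparing directly through the frozen functions $\varphi,\psi$, and using $\varphi>\psi$ only locally together with the monotonicity \ref{itm:A3}, removes that term entirely; this is exactly why the argument needs the assumption $\varphi>\psi$ on all of $K$. A second point to check is that the regularity in \ref{itm:A4} makes the drift $b$ bounded, which the strong maximum principle requires. If $H$ were only continuous in $p$, one would replace the linearization by convexity \ref{itm:A1}: it gives $H(x,du,\varphi)-H(x,dw,\varphi)\ge D_pH(x,dw,\varphi)\cdot dz$, which is an inequality in the correct direction and still yields a bounded drift.
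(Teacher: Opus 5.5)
Your proof is correct and is essentially the paper's argument. You split on whether $\varphi\le\psi$ somewhere on the argmax set; otherwise you reduce the difference of the two inequalities, after discarding a nonnegative $\theta$-term via \ref{itm:A3}, to $-\Delta z+b\cdot dz\le 0$ near the argmax set, and conclude with the strong maximum principle and the connectedness of $\T^n$. The only differences are cosmetic: you freeze $\varphi$ rather than $\psi$ in the drift, and you show openness ball by ball, which is cleaner than the paper's detour through a neighborhood $U$ of the argmax set (where the claim $U=E$ is stated too loosely).
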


\begin{proof} 
Let 
\begin{align*}
    \mathbf{b}(x) &:= \int_0^1 D_pH\big(x,sd_xw(x) + (1-s)d_xu(x), \psi(x)\big) ds \\
    g(x) &:= H(x,d_xu(x), \varphi(x)) - H(x,d_xu(x), \psi(x))
\end{align*}
Then, by subtracting \eqref{eq:pde-w-super} from \eqref{eq:pde-u-sub}, we obtain
\begin{align}\label{eq:pde-elliptic}
    g(x) + \textbf{b}(x)\cdot Dz(x) - \Delta z(x) \leq 0 \qquad\text{in}\;\T^n, \qquad\text{where}\; z = u-w \in C^2(\T^n).
\end{align}
Let
\begin{equation*}
    E = \arg\max_{x\in \T^n} (u-v). 
\end{equation*}
Since $u,v$ are continuous, $E$ is a nonempty compact subset of $\T^n$. 
\begin{itemize}
    \item Case 1. There exists a point $x_0\in E$ such that $\varphi(x_0)\leq \psi(x_0)$. This implies (i). 
    \item Case 2. For all $x\in E$ we have $\varphi(x)>\psi(x)$. This implies that $g(x) \geq 0$ on $E$. For every $x\in E$, by continuity we can find $r_x>0$ such that $\varphi(x)>\psi(x) > 0$ on $B_r(x)$. By compactness, we can find a finite cover, and thus an open set $U\subset \T^n$ containing $E$ such that $\varphi(x)>\psi(x)>0$ on $U$, and thus $g(x)\geq 0$ on $U$. Therefore
    \begin{equation*}
        g(x) + \mathbf{b}(x)\cdot Dz(x) - \Delta z(x) \leq 0 \qquad\text{in}\; U. 
    \end{equation*}
    Since $E\subset U$, the function $z\in C^2(U)$ attains its maximum in the interior of $U$, and thus by the strong maximum principle \cite[Theorem 4]{evans_partial_2010}, $z$ is constant on any connected component of $U$ that intersects $E$. Since the constant value on each of these components is the same, each of these components is contained in $E$, and thus, consequently, $U = E$. This means $E$ is both open and closed in $\T^n$, we must have $E = \T^n$ as $E\neq \emptyset$. Therefore $z = \max_{\T^n} z = \mathrm{const}$ everywhere in $\T^n$, and $\varphi(x)>\psi(x)$ for every $x\in \T^n$. This is (ii).
\end{itemize}
\end{proof}

\begin{lem}\label{lem:sgt-3} Assume \ref{itm:A1}--\ref{itm:A4}. 
If
\beq\label{eq:sing-I(c)}
\max_{\theta \in I(c), x \in \T^n} \partial_u H(x,du_\theta(x),\theta) > 0
\eeq
where $u_\theta$ is the solution of 
	\beq\label{eq:hj-vis-theta}
	H(x,du_\theta(x), \theta) = \Delta u_\theta(x) + \overline H(0,\theta),\quad x \in \T^n
	\eeq
then $I(c)$ is a singleton.
\end{lem}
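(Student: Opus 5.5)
I will argue by contradiction, in the spirit of Lemma \ref{lem:sgt} but using the Lebesgue-type structure of stochastic Mather measures from Lemma \ref{lem:b-con}. Suppose $I(c)$ contains two points $\theta_0<\theta_1$. By Lemma \ref{prop:c}, $\theta\mapsto\overline H(0,\theta)$ is continuous and nondecreasing, so $I(c)$ is an interval containing $[\theta_0,\theta_1]$ and $\overline H(0,\theta)\equiv c$ there. The goal is then to show that $\partial_u H(x,du_\theta(x),\theta)=0$ for every $\theta\in I(c)$ and every $x\in\T^n$. This contradicts \eqref{eq:sing-I(c)}.

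\textbf{Key step: comparing two levels.} Fix $\theta<\theta'$ in $I(c)$ and let $u_\theta$ solve \eqref{eq:hj-vis-theta}. By Lemma \ref{lem:b-con}(i) and the regularity in Proposition \ref{prop:Bernstein}, $u_\theta$ is $C^2$. By \ref{itm:A3},
\[
H(x,du_\theta,\theta')-\Delta u_\theta \ \geq\ H(x,du_\theta,\theta)-\Delta u_\theta = c=\overline H(0,\theta').
\]
So $u_\theta$ is a $C^2$ supersolution at level $\theta'$. I need the converse inequality to be an equality. I would integrate against the stochastic Mather measure for $\theta'$, i.e. against the density $\vartheta_{\theta'}$ from Lemma \ref{lem:b-con}(iv), using the holonomic identity. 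Concretely, set $z=u_\theta-u_{\theta'}$ and subtract the two equations:
\[
g(x)+\mathbf b(x)\cdot Dz-\Delta z=0,\qquad g(x)=H(x,du_\theta,\theta')-H(x,du_\theta,\theta)\ge 0,
\]
with $\mathbf b$ as in the proof of Lemma \ref{lem:cp-2nd}. Then $\mathbf b\cdot Dz-\Delta z\le 0$, so $z$ is a subsolution of a uniformly elliptic linear equation on the compact manifold $\T^n$. Its maximum is interior, so the strong maximum principle (as in Lemma \ref{lem:cp-2nd}) forces $z$ to be constant. Consequently $g\equiv 0$ on $\T^n$, that is,
\[
H(x,du_\theta(x),\theta')=H(x,du_\theta(x),\theta)\quad\text{for all }x\in\T^n .
\]
Since $\theta'$ can be any point in $I(c)$ above $\theta$, and $H$ is monotone in $\theta$ by \ref{itm:A3}, the map $s\mapsto H(x,du_\theta(x),s)$ is constant on $[\theta,\theta']$. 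Its derivative therefore vanishes: $\partial_u H(x,du_\theta(x),\theta)=0$ for all $x$.

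\textbf{Endpoints and the main obstacle.} The argument above handles every $\theta\in I(c)$ except $\sup I(c)$. For $\theta_1=\sup I(c)$, I apply the same argument with the roles reversed: take $\theta<\theta_1$ and treat $u_{\theta_1}$ as a $C^2$ subsolution at level $\theta$. This uses $H(x,du_{\theta_1},\theta)\le H(x,du_{\theta_1},\theta_1)$, and the strong maximum principle again gives $\partial_u H(x,du_{\theta_1},\theta_1)=0$, now as a one-sided derivative from the left. The one-sided derivative equals $\partial_u H$ by \ref{itm:A4}. Hence the maximum in \eqref{eq:sing-I(c)} is $0$, which is the desired contradiction. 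The main obstacle is justifying the strong maximum principle step: it requires $\mathbf b$ to be bounded and $z$ to be $C^2$. Both follow from Proposition \ref{prop:Bernstein}, which gives Lipschitz bounds and $C^{2,\alpha}$ regularity for $u_\theta$ and $u_{\theta'}$, together with the continuity of $D_pH$ from \ref{itm:A4}.
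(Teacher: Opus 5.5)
Your proof is correct, but it is organized differently from the paper's. The paper uses only the downward comparison. For $\theta>\inf I(c)$ and $\theta'<\theta$ in $I(c)$, $u_\theta$ is a $C^2$ subsolution at level $\theta'$, and the paper quotes Lemma \ref{lem:b-con}(i) (``a $C^2$ subsolution is a solution'') to get $H(x,du_\theta,\theta')=H(x,du_\theta,\theta)$. You mainly use the upward comparison, treating $u_\theta$ as a $C^2$ supersolution at a higher level $\theta'$. Lemma \ref{lem:b-con}(i) does not state the supersolution version, so you prove it directly via the linearization and strong maximum principle of Lemma \ref{lem:cp-2nd}. You switch to the paper's downward comparison only at $\sup I(c)$.

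Given the black box, the paper's route is one line. Yours is self-contained, and it also shows that $du_\theta$ is the same for all $\theta\in I(c)$. More importantly, it reaches every $\theta\in I(c)$. The paper's written argument only yields $\partial_u H(x,du_\theta(x),\theta)=0$ for $\theta>\inf I(c)$. If the maximum in \eqref{eq:sing-I(c)} were attained only at $\theta=\inf I(c)$, one would need an extra continuity argument in $\theta$. Your two-sided scheme avoids this.

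There are a few minor fixes. In the upward step the sign of $g$ is reversed. With $\mathbf b=\int_0^1 D_pH(x,s\,du_{\theta'}+(1-s)\,du_\theta,\theta')\,ds$, subtracting the two equations gives $\mathbf b\cdot Dz-\Delta z=g\ge 0$. So $z$ is a supersolution, and you should invoke the strong minimum principle. Since $\T^n$ is closed, the minimum is attained, so the conclusion ($z$ constant, hence $g\equiv 0$) is unchanged. Your endpoint step already has the correct sign.

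Also, Proposition \ref{prop:Bernstein} needs \ref{itm:A5}--\ref{itm:A7}, which are not hypotheses of this lemma. Take the $C^2$ regularity of $u_\theta$ from Lemma \ref{lem:b-con}, as the paper does. Boundedness of $\mathbf b$ then follows from continuity of $D_pH$ (\ref{itm:A4}) and of $du_\theta,du_{\theta'}$ on $\T^n$.

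Finally, your opening remark about integrating against the stochastic Mather measure of $\theta'$ plays no role in the argument and can be dropped.
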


\begin{proof}
Assume $I(c)$ is not a singleton. Then for any $\theta \in I(c)$ strictly greater than $ \inf I(c)$, the solution $u_{\theta}$ of \eqref{eq:hj-vis-theta} associated with $c=\overline H(0,\theta)$ is $C^2-$smooth and unique up to additive constants (due to  Lemma \ref{lem:b-con}). From \ref{itm:A3},  we know that for any $\theta' \in I(c)$ with $\theta'< \theta$ then 
\begin{equation*}
    H(x, du_{\theta}, \theta') \leq H(x, du_{\theta}, \theta) = c + \Dt u_{\theta} = \overline{H}(0,\theta') + \Dt u_{\theta}.
\end{equation*}
By Lemma \ref{lem:b-con}, the subsolution $u_\theta$ is indeed a solution 
\begin{equation*}
    H(x, du_{\theta}, \theta') = c + \Dt u_{\theta} = \overline{H}(0,\theta') +  \Dt u_{\theta}.
\end{equation*}
In other words, for every $x\in \T^n$ we have $H(x,du_\theta(x), \theta') = H(x,du_\theta(x), \theta)$ for $\theta'<\theta$, hence 
\begin{equation}\label{eq:contra-I(c)}
    \partial_u H(x,du_\theta(x),\theta) = 0, \quad\forall\ x\in\T^n
\end{equation}
for any $\theta \in I(c)$ with $\theta > \inf I(c)$. This is a contradiction to \eqref{eq:sing-I(c)}, and thus $I(c)$ must be a singleton.
\end{proof}

For the $\eps$-dependent case,  we can use the same reduction as in subsection \ref{s2.2}: 
Notice that if $w(x) \in C^2(\eps\T^n,\R)$ is a solution (resp. subsolution) of 
\begin{equation*}
    {\bf H}_\varphi\left(\frac{x}{\varepsilon}, d_x w(x)\right) =  c + \eps \Dt w(x) , \quad x \in \eps\T^n
\end{equation*}
with $\varphi(x) \in C^1(\eps\T^n, \R)$, then $\vartheta(y) = \frac{1}{\eps} w(\eps y) \in C^2(\T^n)$ is a solution(resp. subsolution) of 
\[
{\bf H}_{\varphi \circ \eps}(y, d_y \vartheta(y)) = c + \Dt \vartheta(y), \quad y \in \T^n
\]
where $\varphi \circ \eps$  denotes by the function $\varphi(\eps x) \in C^2(\T^n,\R)$. In this case we can apply Lemma \ref{lem:cp-2nd} to ${\bf H}_{\varphi \circ \eps}(y, p)$ and $\vartheta(y)$, then all the conclusions working for $\vartheta(y)$ can be directly drawn for  $w(x)$ without additional difficulties. Therefore, we can get the following comparison principle for the solution $u_\eps$ of equation \eqref{eq:HJ-ep-2nd}:
\begin{prop}[Comparison principle]\label{prop:cp-vis}
Assume \ref{itm:A1}--\ref{itm:A7}.

\begin{enumerate}[(i)]

\item Suppose $u^1_\eps$ (resp. $u^2_\eps$) is a $C^2-$smooth solution (resp. subsolution) of \eqref{eq:HJ-ep-2nd} for fixed $c$, if there is a $x_0 \in \eps\T^n$ such that $u^2_\eps(x_0) > u^1_\eps(x_0)$, then $u^2_\eps - u^1_\eps$ is a contant on $\eps\T^n$ and $u^2_\eps$ is also a solution of \eqref{eq:HJ-ep-2nd}. Consequently, any two solutions of \eqref{eq:HJ-ep-2nd} differ by a constant.

\item For any $c_1,c_2 \in \cC_1$ with $c_1 > c_2$, any solution $u^1_\eps$ of \eqref{eq:HJ-ep-2nd} associated with value $c_1$ has to be strictly larger than any solution $u_\eps^2$ associated with $c_2$.

\item If we further assume \ref{itm:A3'}, then any \( C^2 \) subsolution of \eqref{eq:HJ-ep-2nd} cannot exceed the solution of \eqref{eq:HJ-ep-2nd}.

\item If $u_\eps$ is a solution of \eqref{eq:HJ-ep-2nd} and 
\begin{equation}\label{eq:uni-2nd}
    \max_{x \in \eps\T^n}\partial_u H\left(\frac{x}{\varepsilon},du_\eps(x),u_\eps(x)\right) > 0,
\end{equation}
then $u_\eps$ is the unique solution to \eqref{eq:HJ-ep-2nd} associated with $c$.
\end{enumerate}
\end{prop}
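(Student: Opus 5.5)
Throughout I rescale as described just before the statement: for $u\in C^2(\eps\T^n)$, set $\vartheta(y)=\frac1\eps u(\eps y)$, and apply Lemma \ref{lem:cp-2nd} on $\T^n$ with $\varphi=u^2_\eps(\eps\,\cdot)$ and $\psi=u^1_\eps(\eps\,\cdot)$. These frozen coefficients are $C^1$, since solutions are $C^2$ by Proposition \ref{prop:Bernstein} and subsolutions are assumed $C^2$.

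\emph{Part (i).} Take $u=$ rescaled $u^2_\eps$ (subsolution with $\varphi=u^2_\eps$) and $w=$ rescaled $u^1_\eps$ (solution with $\psi=u^1_\eps$). Lemma \ref{lem:cp-2nd} gives two alternatives. In alternative (i), at some maximum point $x_{max}$ of $u^2_\eps-u^1_\eps$ we have $u^2_\eps(x_{max})\le u^1_\eps(x_{max})$. Then $\max(u^2_\eps-u^1_\eps)\le 0$, which contradicts $u^2_\eps(x_0)>u^1_\eps(x_0)$. So alternative (ii) holds, and $u^2_\eps-u^1_\eps\equiv k>0$ is constant. Then $u^2_\eps=u^1_\eps+k$, and by \ref{itm:A3},
\[
c+\eps\Dt u^1_\eps = H\big(\tfrac{x}{\eps},du^1_\eps,u^1_\eps\big)\le H\big(\tfrac{x}{\eps},du^1_\eps,u^1_\eps+k\big)\le c+\eps\Dt u^1_\eps ,
\]
so equality holds and $u^2_\eps$ is a solution. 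For two solutions, either they are ordered with no strict crossing in one direction, or they differ by a constant. If $u^1\ge u^2$ with $u^1\ne u^2$, swap the roles to conclude again that the difference is constant. The only other case, $u^1=u^2$, is trivial.

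\emph{Part (ii).} Let $u^1$ solve with $c_1$ and $u^2$ solve with $c_2<c_1$. Then $u^2$ is a \emph{strict} subsolution for level $c_1$. If $\max(u^2-u^1)\ge 0$, apply the argument of Lemma \ref{lem:cp-2nd} with $g$ shifted by $c_1-c_2>0$. At a maximum point $x$ where $u^2\ge u^1$, monotonicity gives $g\ge0$. Also $Dz=0$ and $\Dt z\le0$ there. Then the inequality $c_1-c_2+g+\mathbf b\cdot Dz-\eps\Dt z\le 0$ is violated. Hence $u^2<u^1$ everywhere.

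\emph{Part (iii).} Under \ref{itm:A3'}, if a subsolution exceeds the solution somewhere, part (i) yields $u^2=u^1+k$ with $k>0$. But then $H(\cdot,du^1,u^1+k)>H(\cdot,du^1,u^1)$ strictly, contradicting the equality chain above.

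\emph{Part (iv).} If $\tilde u\ne u_\eps$ is another solution, part (i) gives $\tilde u=u_\eps+k$ with $k\neq0$. The equality chain, applied to whichever of the two is larger, forces $H(\cdot,du_\eps,u_\eps+s)$ to be constant in $s$ between $0$ and $k$. Hence $\partial_uH(\tfrac{x}{\eps},du_\eps,u_\eps)=0$ for all $x$. For $k<0$ this uses the one-sided derivative, combined with \ref{itm:A3} and continuity of $\partial_\theta H$. This contradicts \eqref{eq:uni-2nd}.

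The main subtlety is checking regularity: Lemma \ref{lem:cp-2nd} requires $C^1$ coefficients and $C^2$ functions. This is supplied by Proposition \ref{prop:Bernstein}, which gives $C^{2,\alpha}$ regularity for the solutions.
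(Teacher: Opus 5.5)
Your proof is correct. For parts (i), (iii) and (iv) it is the paper's argument: rescale to $\T^n$, apply Lemma \ref{lem:cp-2nd} with frozen coefficients $\varphi=u^2_\eps(\eps\,\cdot)$ and $\psi=u^1_\eps(\eps\,\cdot)$, and rule out its alternative (i), since $\varphi-\psi$ is a positive multiple of $u-w$. Then \ref{itm:A3} shows that the shifted subsolution $u^1_\eps+k$ is a solution. Your explicit treatment of the case $k<0$ in (iv) is slightly more careful than the paper's.

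The only real difference is in (ii). The paper first obtains $u^1_\eps\ge u^2_\eps$ from part (i). It then excludes contact points using $du^1_\eps=du^2_\eps$ and $\Dt(u^1_\eps-u^2_\eps)\ge 0$ there. You instead run the classical strict-subsolution argument once, at a maximum point of $u^2_\eps-u^1_\eps$, whenever that maximum is $\ge 0$. This handles crossing and touching in a single step. It also avoids the strong maximum principle behind Lemma \ref{lem:cp-2nd}.
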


\begin{proof} 
    {\it (i).} Take $\varphi = u_\eps^2$ and $\psi = u^1_\eps$ in Lemma \ref{lem:cp-2nd}. Since we have $u^2_\eps(x_0) > u^1_\eps(x_0)$, we have 
    \begin{equation*}
        \max_{\eps\T^n}(\varphi-\psi)>0. 
    \end{equation*} 
    By Lemma \ref{lem:cp-2nd}-(ii) we obtain $u^2_\eps - u^1_\eps\equiv {\rm const}>0$ on $\eps\T^n$. Hence
    \begin{align*}
        & {\bf H}_{u^2_\eps}\left(\frac{x}{\varepsilon},du^2_\eps(x)\right) - \eps \Dt u^2_\eps(x) 
        \geq 
        {\bf H}_{u^1_\eps}\left(\frac{x}{\varepsilon},du^2_\eps(x)\right) - \eps \Dt u^2_\eps(x) \geq c,\quad \text{on\ }\eps\T^n \\
        & {\bf H}_{u^2_\eps}\left(\frac{x}{\varepsilon},du^2_\eps(x)\right) - \eps \Dt u^2_\eps(x) \leq c,\quad \text{on\ }\eps\T^n. 
    \end{align*}
Thus  $u^2_\eps$ is also a solution of \eqref{eq:HJ-ep-2nd}.
    \medskip 

    {\it (ii).} Since $c_1 > c_2$, so $u^2_\eps$  is a strict subsolution to \eqref{eq:HJ-ep-2nd} associated with $c_1$. 
    Previous arguments indicates $u^1_\eps \geq u^2_\eps$. If there exists $x_0 \in \T^n$ such that $u^1_\eps(x_0) = u^2_\eps(x_0)$, then $du^1_\eps(x_0) = du^2_\eps(x_0)$ and $\Dt(u^1_\eps - u^2_\eps) \geq 0$. Therefore, we have
    \begin{equation*}
        c_1 - c_2 = {\bf H}_{u^1_\eps}\left(\frac{x_0}{\varepsilon}, d u^1_\eps(x_0)\right) 
        - 
        {\bf H}_{u^2_\eps}\left(\frac{x_0}{\varepsilon}, d u^2_\eps(x_0)\right) - \eps \Dt(u^1_\eps - u^2_\eps)(x_0) \leq 0
    \end{equation*}
    which  contradicts $c_1 > c_2$. Hence $u^1_\eps > u^2_\eps$.
    \medskip 

    {\it (iii).} Due to (i), if the subsolution $u_\eps^1$ is larger than the solution $u_\eps^2$ at some point, then $u_\eps^1-u_\eps^2\equiv {\rm const}>0$, and by monotonicity \ref{itm:A3} we obtain 
    $\partial_u H\left(\frac{x}{\varepsilon},du_\eps(x),u_\eps^1(x)\right) = 0$. 
    That contradicts the assumption \ref{itm:A3'}.
    \medskip 

    {\it (iv).} Assume there is another solution $\hat{u}_\eps$ to \eqref{eq:HJ-ep-2nd}, then $\hat{u}_\eps$ and $u_\eps$ differ by a non-zero constant due to (i). So $\partial_u H\left(\frac{x}{\varepsilon},du_\eps(x),u_\eps(x)\right) = 0$ for any $x \in \eps\T^n$ which contradicts with \eqref{eq:uni-2nd}.
\end{proof}

We are now ready to prove Theorem \ref{thm:main-2}.

\begin{proof}[Proof of Theorem \ref{thm:main-2}]

Part {\it (i)} is already proved in Lemma \ref{lem:ExisTheta} and Lemma \ref{prop:c}. Part {\it (ii)} can be deduced from Proposition \ref{prop:cp-vis}-(i). \medskip

{\it (iii).} From Proposition \ref{prop:cp-vis} we know that any solution to \eqref{eq:HJ-ep-2nd} associated with the same value $c$ differ by constant. For any $\theta \in \R$, we define two functions
\begin{equation*}
    \vartheta_{\theta,\eps}^-(x) = \theta + \eps u_{\theta}\left(\frac{x}{\varepsilon}\right) - 2 \eps\Vert u_{\theta} \Vert_{L^\infty(\T^n)} 
    \quad \text{and}\quad 
    \vartheta_{\theta,\eps}^+(x) = \theta + \eps u_{\theta}\left(\frac{x}{\varepsilon}\right) + 2 \eps \Vert u_{\theta} \Vert_{L^\infty(\T^n)}, 
\end{equation*}
where $u_\theta$ is the solution to \eqref{eq:hj-vis-theta} associated with $\theta$. It is clear that for $\theta \in I(c)$ then
\begin{equation*}
    \Vert \vartheta_{\theta,\eps}^- -\theta\Vert_{L^\infty(\R^n)} \leq C\eps, \qquad 
    \Vert \vartheta_{\theta,\eps}^+ -\theta\Vert_{L^\infty(\R^n)} \leq C\eps
\end{equation*}
for some common constant $C = C(H,c)$. When $\theta_ -= \inf I(c) > -\infty$, let $u_\eps$ be a solution to \eqref{eq:HJ-ep-2nd}. We have
\begin{equation*}
    H_{\vartheta^-_{\theta, \varepsilon}}\left(\frac{x}{\varepsilon}, d\vartheta^-_{\theta,\varepsilon}(x)\right) \leq c + \varepsilon\Delta \vartheta^-_{\theta,\varepsilon}, 
    \qquad 
    H_{u_\varepsilon}\left(\frac{x}{\varepsilon}, du_\varepsilon\right) = c + \varepsilon\Delta u_\varepsilon.
\end{equation*}
\begin{itemize}
    \item By Lemma \ref{lem:cp-2nd} with $(u,\varphi) = (\vartheta_{\theta_-,\eps}^-, \vartheta_{\theta_-,\eps}^-)$ and $(w,\psi) = (u_\varepsilon, u_\varepsilon)$, we obtain that $\vartheta^-_{\theta_-, \varepsilon} \leq u_\varepsilon$ for any solution $u_\varepsilon$ of \eqref{eq:HJ-ep-2nd}, hence $u_\eps^-$ is well-defined, $u_\eps^- \geq \vartheta_{\theta,\eps}^-$ and $u_\varepsilon^-$ is a supersolution of \eqref{eq:HJ-ep-2nd}, and in this second-order case is indeed a solution of \eqref{eq:HJ-ep-2nd}.

    \item By Lemma \ref{lem:cp-2nd} with $(u,\varphi) = (u^-_\varepsilon,u^-_\varepsilon)$ and $(w,\psi) = (\vartheta^+_{\theta_-, \varepsilon}, \vartheta^+_{\theta_-, \varepsilon})$, we obtain that $u^-_\varepsilon \leq \vartheta^+_{\theta_-,\varepsilon}$. 
\end{itemize}
As a consequence we have $\Vert u_\eps^- - \inf I(c)\Vert_{L^\infty(\R^n)} \leq C\eps$. The case $\sup I(c) < \infty$ is similar. 
\medskip

{\it (iv).} The uniqueness of the solution of \eqref{eq:HJ-overline} is due to Lemma \ref{lem:sgt-2}. 
If $I(c) = \{\theta\}$ is a singleton then $\inf I(c) = \sup I(c) = \theta$, and furthermore, $u\equiv \theta$ is a solution to
\begin{equation*}
    H\left(\frac{x}{\varepsilon}, du, u\right) = c + \varepsilon\Delta u \qquad\text{for}\;x\in \varepsilon \T^n. 
\end{equation*}
Since $u^+_\eps \geq u_\eps \geq u^-_\eps$, we immediately get our conclusion from part (iii). 
\end{proof}

\begin{proof}[Proof of Corollary \ref{cor:homo2}]
Under the assumption \ref{itm:A3'}, we can directly get the conclusion by Proposition \ref{prop:cp-vis}-(iv). 
Under the assumption \ref{itm:A8}, it suffices to prove 
\begin{equation}\label{eq:temp-mono}
    \max_{x \in \eps\T^n}\partial_u H\left(\frac{x}{\varepsilon},du_\eps(x),u_\eps(x)\right) > 0. 
\end{equation}
If \eqref{eq:temp-mono} is true, then by Proposition \ref{prop:cp-vis}-(iv) we obtain the uniqueness of $u_\eps$. Let us assume the contrary of \eqref{eq:temp-mono}, i.e.,
\begin{align*}
    \max_{x \in \eps\T^n}\partial_u \left(\frac{x}{\varepsilon},du_\eps(x),u_\eps(x)\right) = 0.
\end{align*}
Then we have 
\begin{align*}
    \max_{x \in \eps\T^n}\partial_u H\left(\frac{x}{\varepsilon},du_\eps(x),f(x)\right) = 0 \qquad\text{for any}\;f \in C^1(\eps\T^n,\R)\;\text{such that}\; f\leq u_\varepsilon
\end{align*}
due to 
\begin{itemize}
    \item \ref{itm:A3}:  $\partial_u H(x,p,\cdot)$ is non-negative; 
    \item \ref{itm:A8}: $\partial_u H(x,p,\cdot)$ is non-decreasing in $\theta$ for any fixed $(x,p) \in \eps\T^n \times \R^n$), thanks to the convexity of $H$ in $\theta$. 
 \end{itemize}
 This means 
 \begin{equation*}
     H\left(\frac{x}{\varepsilon},du_\eps(x),f(x)\right) = H\left(\frac{x}{\varepsilon},du_\eps(x),u_\varepsilon(x)\right) \qquad\text{for any}\;f \in C^1(\eps\T^n,\R)\;\text{such that}\; f\leq u_\varepsilon. 
 \end{equation*}
 We deduce that, if $f\in C^1(\varepsilon\T^n;\R)$ with $f\leq u_\varepsilon$ then
\begin{equation}\label{eq:cor16-0}
    H\left(\frac{x}{\varepsilon},du_\eps(x),f(x)\right) = c + \eps \Dt u_\eps(x), \quad x \in \eps\T^n 
\end{equation}
Recall that $I(c)=\{\theta_0\}$ is a singleton, so for any $\theta_1 < \theta_0$ we have $\overline H(0,\theta_1) < c$. Let 
\begin{equation*}
    \theta_1 < \min\left \{\theta_0, \min_{\T^n} u^\varepsilon \right\}. 
\end{equation*}
By taking $f\equiv\theta_1$ in \eqref{eq:cor16-0} we obtain
\begin{align*}
    H\left(\frac{x}{\varepsilon},du_\eps(x),\theta_1\right) =c + \eps \Dt u_\eps(x), \quad x \in \eps\T^n.
\end{align*}
Let $w_\varepsilon(y) = \varepsilon u_\varepsilon\left(\varepsilon y\right)$ for $y\in \T^n$. We obtain
\begin{equation}\label{eq:theta1}
    H\left(y,dw_\eps(y),\theta_1\right) =c + \Dt u_\eps(x), \quad y \in \T^n.
\end{equation}
We recall that, the unique constant $c$ such that this equation admits a solution is $c = \overline{H}(0,\theta_1)$. This is a contradiction to the fact that $\overline H (0,\theta_1) < c$. Therefore, \eqref{eq:temp-mono} holds and we finish the proof.
\end{proof}

\noindent{\bf Acknowledgement.} The work of G. Liu and J. Zhang are supported by the National Key R\&D Program of China (No. 2022YFA1007500) and the National Natural Science Foundation of China (No. 12231010, 12571207). The authors thank H. Mitake and P. Ni for pointing out an error in an earlier version of the paper, and W. Jing and H. Tran for helpful comments on its presentation.


\end{document}